\title[   ]{$L^2$-contraction of large planar shock waves for multi-dimensional scalar viscous conservation laws}
\author[Kang]{Moon-Jin Kang}
\address[Moon-Jin Kang]{
\newline Department of Mathematics, \newline The University of Texas at Austin, Austin, TX 78712, USA}
\email{moonjinkang@math.utexas.edu}
\author[Vasseur]{Alexis F. Vasseur}
\address[Alexis F. Vasseur]{\newline Department of Mathematics, \newline The University of Texas at Austin, Austin, TX 78712, USA}
\email{vasseur@math.utexas.edu}
\author[Wang]{Yi Wang}
\address[Yi Wang]{\newline Institute of Applied Mathematics, AMSS,  CAS, Beijing 100190, China
 and Beijing Center of Mathematics
and Information Sciences, Beijing 100048, P. R. China}
\email{wangyi@amss.ac.cn}
\def\v{\varepsilon}
\def\f{\frac}
\def\di{\displaystyle}
\def\i{\infty}
 \newtheorem{lemma}{\bf Lemma}[section]
       \newtheorem{theorem}[lemma]{\bf Theorem}
       \newtheorem{proposition}[lemma]{\bf Proposition}
       \newtheorem{remark}[lemma]{\bf Remark}
\newcommand{\bbr}{\mathbb R}
\newcommand{\bbt} {\mathbb T}
\numberwithin{figure}{section}
\newcommand{\beq}{\begin{equation}}
\newcommand{\eeq}{\end{equation}}
\newcommand{\bsp}{\begin{split}}
\newcommand{\esp}{\end{split}}
\def\eps{\varepsilon }
\newcommand\adots{\mathinner{\mkern2mu\raise1pt\hbox{.}
\mkern3mu\raise4pt\hbox{.}\mkern1mu\raise7pt\hbox{.}}}
\def\charf {\mbox{{\text 1}\kern-.30em {\text l}}}
\def\XXint#1#2#3{{\setbox0=\hbox{$#1{#2#3}{\int}$}
\vcenter{\hbox{$#2#3$}}\kern-.5\wd0}}
\def\XXparallel#1#2#3{{\setbox0=\hbox{$#1{#2#3}{\parallel}$}
\vcenter{\hbox{$#2#3$}}\kern-.6\wd0}}
\begin{document}

\date{\today}

\keywords{}

\thanks{\textbf{Acknowledgment.} M.-J. Kang was partially supported by Basic Science Research Program through the National Research Foundation of Korea (NRF-2013R1A6A3A03020506).
  A. F. Vasseur was partially supported by the NSF Grant DMS 1209420. Y. Wang is supported by NSFC grant No. 11322106, Youth Innovation Promotion Association of CAS and Young top-notch talent Program of Organization Department of CCCPC.
}

\begin{abstract}
 We consider a $L^2$-contraction of large viscous shock waves for the multi-dimensional scalar viscous conservation laws, up to a suitable shift. The shift function depends on the time and space variables. It solves a parabolic equation with inhomogeneous coefficients reflecting the perturbation. We consider a suitably small $L^2$-perturbation around a viscous planar shock wave of arbitrarily large strength. However, we do not impose any condition on the anti-derivative variables of the perturbation around shock profile. More precisely, it is proved that if the initial perturbation around the viscous shock wave is suitably small in the $L^2$ norm, then the $L^2$-contraction holds true for the viscous shock wave up to a shift function which may depend on the temporal and spatial variables. Moreover, as the time $t$ tends to infinity, the $L^2$-contraction holds true up to a time-dependent shift function. In particular, if we choose some special initial perturbation, then we can prove a $L^2$ convergence of the solutions towards the associated shock profile up to a time-dependent shift.
\end{abstract}
\maketitle \centerline{\date}

\section{Introduction and Main results}
\setcounter{equation}{0}

We consider the multi-dimensional scalar viscous conservation laws
\begin{equation}\label{eq}
\left\{
\begin{array}{ll}
\di \partial_tu+{\rm div} A(u)= \Delta u,\\
\di u(t=0,x)=u_0(x),
\end{array}
\right.
\end{equation}
where $t\in \mathbb{R}^+$, $x=(x_1,x^\prime)\in \bbr\times\mathbb{T}^{N-1}$ with $\mathbb{T}^{N-1}$ being $N-1$ dimensional torus, $ N\geq 2$, $u=u(t,x)\in \mathbb{R}$, and $A(u)=(A_1(u), A_2(u),\cdots, A_N(u))^t\in\mathbb{R}^N$ is a smooth vector field of $N$ fluxes $A_i$, with $A_1$ being strictly convex, i.e., $A_1''(u)>0$, $\forall~ u\in \mathbb{R}$.\\
 Without loss of generality, we consider stationary planar shock waves $U(x_1)$ satisfying
\begin{equation}\label{shock}
\left\{
\begin{array}{ll}
\di (A_{1}(U))'= U'', \\[3mm]
\di U(x_1)\rightarrow u_\pm,\quad {\rm as} ~~x_1\rightarrow\pm \infty,
\end{array}
\right.
\end{equation}
where $x_1 \in \bbr$ denotes the normal direction, and $x'$ the transverse directions parallel to the shock front. Here, the two end points $u_{\pm}$ satisfy $u_->u_+$ by the strict convexity of $A_1$ and the Lax entropy condition, and $A_1(u_+)=A_1(u_-)$.  The existence of the stationary shock profile to \eqref{shock} is well-known and the profile is unique up to a constant shift (see for example \cite{IO}).

In this article, we consider a $L^2$-contraction of large shock waves $U(x_1)$ in \eqref{shock} for the multi-dimensional scalar viscous conservation laws \eqref{eq}. There are many literatures concerning the stability of viscous shock wave to the viscous conservation laws in one-dimensional case. In 1960s, Il'in-Oleinik \cite{IO} first proved the time-asymptotic stability of viscous shock waves to the scalar equation \eqref{eq} when $N=1$. Then, Goodman \cite{G} and Matsumura-Nishihara \cite{MN} independently proved the stability of viscous shock waves to the system case under the zero mass condition on the perturbation about the shock profile. Then, by introducing suitable constant shift on the shock profile and the linear and nonlinear diffusion waves in the transverse characteristic fields, Liu \cite{Liu} removed the zero mass condition in \cite{G, MN}. Furthermore, Spzessy-Xin \cite{SX} introduced the coupled diffusion waves to improve the stability result in \cite{Liu}.  Recently, Vasseur-Yao \cite{VY} removed the smallness on the shock strength in \cite{MN} by introducing a new entropy variable. For the multi-dimensional case $N\ge 2$, Kruzhkov \cite{K} first proved the $L^1$ contraction for the multidimensional scalar viscous conservation laws \eqref{eq}, using Kruzhkov entropies. Goodman \cite{G1} proved the stability of weak shocks based on the anti-derivative variables by introducing the shift function depending on the spatial and temporal variables. Hoff-Zumbrun \cite{HZ, HZ1} improved the stability result in \cite{G1} to the large shock waves. Notice that the above stability results are all based on the energy methods or point-wise Green function methods by using the anti-derivative variables to the perturbation around the shock profile. 
On the other hand, Freist$\ddot{\mbox{u}}$hler-Serre \cite{FS} proved the large-time $L^1$ stability of large perturbations of viscous shocks to scalar conservation laws \eqref{eq} when $N=1$. 

Another method for the $L^2$-type stability is based on the relative entropy method, which is purely nonlinear, and allows to handle rough and large perturbations. The relative entropy method was first introduced by Dafermos \cite{Da} and Diperna \cite{Di} to prove the $L^2$ stability and uniqueness of Lipschitzian solutions to the hyperbolic conservation laws endowed with a convex entropy. In \cite{Di}, that was also used to get uniqueness of some discontinuous solutions in some particular cases. However, no stability result was obtained in this paper. Later, Chen-Frid \cite{CF2, CF3} and Chen-Frid-Li \cite{CFL} used this method to prove the uniqueness and asymptotic  stability of Riemann solutions to some hyperbolic conservation laws. The theory of stability of discontinuous solutions, based on the relative entropy has been reformulated in \cite{SV, KV2} in terms of contraction, up to a shift. Recently, the method was used by  Leger in \cite{L} to show the $L^2$-contraction up to a shift of inviscid shocks to the scalar conservation laws (see also \cite{A} for an extension  to $L^p$, $1<p<\infty$).  That has been extended to the system case in \cite{LV} for extreme shocks, and general criteria have been developed in \cite{KV3}, \cite{SV} for possibly all shocks including intermediate characteristic fields. 
The relative entropy method is also an effective method for the study of  asymptotic limits. One of the first usage of the method in this context is due to Yau \cite{Yau} for the hydrodynamic limit of Ginzburg-Landau models. Since then, there have been many works  in this context, see \cite{BGL1, BGL2, BTV, BV, GS, KV1, LM, MS} etc. and the survey paper \cite{V}, although they are all considering the limit to a smooth (Lipschitz) limit function. Recently, the relative entropy method has been successfully applied to showing the vanishing viscosity limit of the viscous scalar conservation laws to shocks \cite{CV}, and the zero dissipation limit of full compressible Navier-Stokes-Fourier system to contact discontinuities \cite{VW}.
Furthermore, that has been also successfully used to prove the $L^2-$contraction of viscous shock profiles to the one-dimensional scalar viscous conservation laws \cite{KV}, up to a time-dependent shift. 

The present paper is the first attempt to use the relative entropy method to study the $L^2$ contraction of viscous planar shock waves to the multidimensional viscous conservation laws. Unlike the one-dimensional case in \cite{KV2}, there is a more difficult issue for the multi-dimensional case since the perturbation may propagate along the transverse directions. More precisely, we need to define a spatially inhomogeneous shift function, for which we have the contraction of the viscous shock. The main difficulty is to prove the global-in-time existence of the shift function. On the other hand, if we choose a special initial perturbation, then we have that the special perturbation is contractive and time-asymptotically converges to the viscous shock wave up to the time-dependent shift. Our results require the initial perturbations to be suitably small in $L^2(\bbr\times\bbt^{N-1})$ but the shock strength can be arbitrarily large.

For notational convenience, we will denote the spatial domain by 
$$
\Omega:=\bbr\times \bbt^{N-1}.
$$
Our first result is the following.
\begin{theorem}\label{thm:general}
Let $U$ be a planar shock wave defined by \eqref{shock}. Then, for any fixed $t_0>0$, there exist a positive constant $\delta_0$ and a shift function $Y(t,x)$ such that, for any initial data $u_0$ with $\|u_0-U\|_{L^2(\Omega)}<\delta_0$ and $u_0\in L^{\infty}(\Omega)$, the solution $u$ to \eqref{eq} with the initial data $u_0$ satisfies that $\int_{\Omega} |u(t,x)-U(x_1+ Y(t,x))|^2 dx$ is non-increasing in time for $t>t_0$. Moreover, there exists a positive constant $C(t_0)$ depending on $t_0$ such that
\beq\label{thm-claim1}
\int_{\Omega} |u(t,x)-U(x_1+ Y(t,x))|^2 dx \le C(t_0)\int_{\Omega} |u_0(x)-U(x_1)|^2 dx,\quad \forall t>0.
\eeq
The spatially inhomogeneous shift $Y(t,x)$ can be constructed such that
\begin{align}
\begin{aligned} \label{Y-final}
&\|\sqrt{|U'(\cdot+m(t))|} (Y-m(t))\|_{L^{\infty}(0,\infty; L^2(\Omega))}+\|\sqrt{|U'(\cdot+m(t))|} \nabla Y\|_{L^2((0,\infty)\times\Omega)}\le C\delta_0,\\
& \|\nabla  Y\|_{L^{\infty}(0,\infty; L^2(\Omega))}+ \|\Delta  Y\|_{L^2((0,\infty)\times\Omega)}\le C\delta_0,\\
&\|\nabla  Y\|_{L^{\infty}(0,\infty; H^s_{loc}(\Omega))}+ \|\Delta  Y\|_{L^2(0,\infty; H^{s}_{loc}(\Omega))}\le C(t_0)\delta_0,
\end{aligned}
\end{align}
where $s>\frac{N}{2}$, and $C$ is some positive constant.\\
Furthermore, we have the following time-asymptotic behavior for the shift $Y$:
\beq\label{Y-final2}
\lim_{t\to\infty}\int_{\Omega}|U( x_1+Y(t,x))-U(x_1+m(t))|^2 dx=0,
\eeq
where the spatially homogeneous shift $m(t)$ satisfies
\begin{equation}\label{mt}
m(t)=\frac{\di \int_\Omega |U^\prime(x_1+m(t))|Ydx}{\di \int_\Omega |U^\prime(x_1+m(t))|dx}.
\end{equation}
\end{theorem}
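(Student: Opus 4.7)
The approach uses the relative entropy method with a carefully designed spatially inhomogeneous shift. Define $U^Y(t,x) := U(x_1 + Y(t,x))$. The plan is to construct $Y$ as the solution of a parabolic equation chosen so that $\int_\Omega (u-U^Y)^2\,dx$ is non-increasing, and then to deduce the asymptotic behavior of $Y$ from the a priori estimates one obtains along the way.

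First I would compute $\frac{d}{dt}\int_\Omega \frac{1}{2}(u-U^Y)^2\,dx$ using \eqref{eq} and \eqref{shock}. After integration by parts and the chain rule $\partial_t U^Y = U'(x_1+Y)\,\partial_t Y$ and $\nabla U^Y = U'(x_1+Y)(e_1 + \nabla Y)$, this derivative decomposes into: a good dissipation term $-\int |\nabla(u-U^Y)|^2\,dx$; a ``relative flux'' contribution from the strictly convex $A_1$ that has a favorable sign up to commutator terms involving $\nabla Y$; and residual terms linear in $\partial_t Y$ and $\nabla Y$. The core idea is to choose the evolution of $Y$ so that these residual terms, combined with the dissipation, yield a non-positive total.

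This leads to a quasilinear parabolic equation of the schematic form $\partial_t Y - \Delta Y = \cF(u, U^Y, \nabla Y)$, with coefficients depending on the perturbation. I would prove local-in-time existence of $Y$ with $Y(0,\cdot)=0$ by a standard parabolic fixed-point argument, then establish the global estimates \eqref{Y-final} by testing the equation against weighted multipliers. The first estimate follows by testing against $|U'(x_1+m(t))|(Y-m(t))$, exploiting that \eqref{mt} removes the $U'$-kernel; the $H^1$/$H^2$ bounds come from testing against $-\Delta Y$ and invoking parabolic regularity; the localized higher-Sobolev bound follows by differentiating the equation and a bootstrap. Smallness of $\|u_0-U\|_{L^2(\Omega)}$ absorbs nonlinear terms, while the exponential localization of $U'$ near the shock provides the weighted Poincar\'e-type control in the normal direction. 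Global existence then follows by continuation. Once $Y$ is in hand, the contraction $\frac{d}{dt}\int (u-U^Y)^2\,dx \le 0$ holds by design for $t>t_0$, and \eqref{thm-claim1} on the initial layer $[0,t_0]$ follows from a rough $L^2$ stability estimate with Gr\"onwall, which explains the $C(t_0)$ factor. For \eqref{Y-final2}, the implicit definition \eqref{mt} forces $Y(t,\cdot)-m(t)$ to have zero $|U'(\cdot+m(t))|$-weighted average; combining this with $\|\sqrt{|U'(\cdot+m(t))|}\,\nabla Y\|_{L^2((0,\infty)\times\Omega)}<\infty$ and a weighted Poincar\'e inequality on $\Omega = \bbr\times\bbt^{N-1}$ (using localization of the weight in $x_1$ and compactness in $x'$) yields convergence of the weighted $L^2$ norm of $Y-m(t)$ to zero, which is transferred to \eqref{Y-final2} via the boundedness and monotonicity of $U$ together with dominated convergence.

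The main obstacle is closing the a priori estimates for $Y$ globally in time. In the one-dimensional setting of \cite{KV} the shift depends only on $t$ and solves an ODE, whereas here the quasilinear parabolic PDE for $Y$ is nonlinearly coupled to $u-U^Y$ through the residual and flux terms. The delicate point is to balance the weight $|U'(x_1+m(t))|$---which matches the shock profile and provides the required localization in the normal direction---against the transverse components of $\nabla Y$, which are not damped by $|U'|$ and must be controlled separately by the unweighted parabolic dissipation. The restriction $t>t_0$ in the contraction reflects the need for parabolic smoothing before these nonlinear estimates can be initiated for merely $L^2\cap L^\infty$ initial data.
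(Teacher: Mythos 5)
Your high-level skeleton---define $Y$ by a quasilinear parabolic PDE chosen to make the relative-entropy derivative non-positive, close global a priori estimates with weighted multipliers and continuation, and use parabolic smoothing on the initial layer to explain the $C(t_0)$ factor---matches the architecture of the paper. But there are two genuine gaps, the first of which is fatal as stated.

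You assert that the relative-flux contribution from strict convexity of $A_1$ ``has a favorable sign up to commutator terms involving $\nabla Y$.'' This is wrong. With $V = U(x_1+Y)$, $U' < 0$, and $A_1$ strictly convex, the principal part of this contribution is
\[
-\int_\Omega A_1(u|V)\,\partial_{x_1}V\,dx \;=\; \int_\Omega A_1(u|V)\,|U'(x_1+Y)|\,(1+\partial_{x_1}Y)\,dx \;\ge\; 0
\]
to leading order, i.e.\ it is a \emph{positive}, hence harmful, term. The paper's key device is therefore not to exploit the sign of this term but to destroy it: one inserts a drift $w\cdot\nabla V$ into the comparison equation (equivalently $w\cdot\nabla Y$ in the PDE for $Y$, as in \eqref{Y-eq}) with the specific choice $w = A(u|V)/(u-V)$, so that in the identity \eqref{ineq-0} the flux contribution $-\int (A(u|V) - (u-V)w)\cdot\nabla V\,dx$ vanishes identically. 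In tandem, a source $g(t)=\int_\Omega (u-V)U'(x_1+m(t))\,dx$ is built into $G$, which together with the constraint \eqref{mt} manufactures the extra dissipation $\big(\int (u-V)U'(x_1+m)\,dx\big)^2$ needed to absorb the remaining error terms via the weighted Poincar\'e inequality (Lemma~\ref{lem:poincare}). Your schematic $\partial_t Y - \Delta Y = \cF(u,U^Y,\nabla Y)$ does not identify this cancellation, and without it the energy inequality does not close. (The cut-off $\varphi$, and the $\psi_M$, $h_M$ pieces of $G$ used to handle the initial layer and the far-field tail of $w_1$, are further ingredients your outline omits but are more in the nature of implementation details.)

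Second, your argument for \eqref{Y-final2} is incomplete. From $\|\sqrt{|U'(\cdot+m)|}\,\nabla Y\|_{L^2((0,\infty)\times\Omega)}<\infty$, the weighted Poincar\'e inequality, and the zero-mean constraint \eqref{mt}, you obtain $\int_0^\infty f(t)\,dt < \infty$ for $f(t)=\int_\Omega|U(x_1+Y)-U(x_1+m(t))|^2\,dx$, but integrability in time does not imply $f(t)\to 0$. One must also show $\int_0^\infty |f'(t)|\,dt < \infty$, which the paper does by differentiating $f$, substituting the equation for $Y$, and re-using the a priori bounds on $\nabla Y$, $\Delta Y$, $\nabla(u-V)$, $m'$, $h_M$, and $g$. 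Your sketch gives no control of $f'$.
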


\begin{remark}\label{remark-v}
In proof of Theorem \ref{thm:general}, we will consider the shift $Y$ as a solution of a parabolic equation
\begin{equation}\label{Y-eq}
\left\{
\begin{array}{ll}
\di\partial_t Y -A'_1(U( Y+x_1))\partial_{x_1} Y +\sum_{i=2}^{N}A'_i(U( Y+x_1))\partial_{x_i}Y\\
\di\quad -A'_1(U(Y+x_1))|\nabla_x Y|^2+w\cdot \nabla_x Y-\Delta Y \\
\di \quad  =- (w_{1}- h_M(t)) \psi_M (x_1+m(t)) - h_M(t) - g(t),\\
\di Y|_{t=0}=0.
\end{array}\right.
\end{equation}
Here, $w=(w_1,\cdots,w_N)$ is a vector field defined by 
\beq\label{w}
w=\varphi(t)\frac{A(u|U(Y+x_1))}{u-U(Y+x_1)},
\eeq
where $\varphi$ is a smooth function such that $0\le\varphi\le 1$ and
\[
\varphi(t)=\left\{ \begin{array}{ll}
          0& \mbox{if $ 0<t<\frac{t_0}{2}$},\\
        1& \mbox{if $t>t_0$},\end{array} \right.
\]  
where $t_0$ is the arbitrarily fixed constant in Theorem \ref{thm:general}.\\
Moreover, $h_M$ is an average of $w_1$ as
\[
h_M(t):=\frac{1}{2(M+1)}\int_{\bbt^{N-1}}\int_{|x_1+m(t)|\leq M+1} w_1 dx,
\]
where $m(t)$ is defined by \eqref{mt}, and $M$ is some constant sufficiently large, and
$$
g(t)=\int_{\Omega} (u- U(Y+x_1))U^\prime(x_1+m(t)) dx.
$$
\end{remark}

\begin{remark}\label{remark-m}
In Theorem \ref{thm:general}, the smallness condition on $u_0-U$ is only in $L^2(\Omega)$. In addition to Theorem \ref{thm:general}, we will show that if there exists a constant $\delta_0>0$ such that $\|u_0-U\|_{H^s(\Omega)}<\delta_0$ for $s>\frac{N}{2}$, then the solution $u$ to \eqref{eq} with the initial data $u_0$ satisfies the $L^2$-contraction for all $t>0$, i.e.,
\beq\label{thm-claim2}
\int_{\Omega} |u(t,x)-U(x_1+ Y(t,x))|^2 dx \le \int_{\Omega} |u_0(x)-U(x_1)|^2 dx,\quad t>0,
\eeq
where the shift $Y(t,x)$ can be constructed as a solution of the above equation \eqref{Y-eq} without $\varphi$, i.e., $\varphi(t)= 1$ for all $t>0$, thus the shift $Y$ satisfies the above properties \eqref{Y-final} and \eqref{Y-final2}. 
As a consequence, we have a time-asymptotic $L^2$-contraction of the shock up to the spatially homogenous shift $m(t)$, i.e.,
\[
\lim_{t\to\infty}\int_{\Omega}|u(t,x)-U(x_1+m(t))|^2 dx\le \int_{\Omega}|u_0(x)-U(x_1)|^2 dx.
\]
\end{remark}


\begin{remark}\label{mt-1}
Notice that since $ \int_\Omega |U^\prime(x_1+m(t))|dx= \int_\Omega |U^\prime(x_1)|dx$, it follows from \eqref{mt} that
$$
\begin{array}{ll}
\di m^\prime(t)=\frac{\partial_t\int_\Omega|U^\prime(x_1+m(t))|Y(t,x_1,x^\prime) dx}{\int_\Omega |U^\prime(x_1)|dx}=\frac{\partial_t\int_\Omega|U^\prime(x_1)|Y(t,x_1-m(t),x^\prime) dx}{\int_\Omega |U^\prime(x_1)|dx}\\[4mm]
\di \qquad =\frac{\int_\Omega|U^\prime(x_1)|\big(Y_t (t,x_1-m(t),x^\prime) -m^\prime(t)\partial_{x_1}Y(t,x_1-m(t),x^\prime)\big)dx}{\int_\Omega |U^\prime(x_1)|dx}.
\end{array}
$$
Therefore, $m(t)$ satisfies the following ODE
\begin{equation}\label{mt2}
\begin{array}{ll}
\di m^\prime(t) \int_\Omega|U^\prime(x_1)|\big(1+\partial_{x_1}Y(t,x_1-m(t),x^\prime)\big) dx=\int_\Omega |U^\prime(x_1+m(t))|Y_t (t,x_1,x^\prime)dx\\
\di  =\int_\Omega |U^\prime(x_1+m(t))|\Big[ A'_1(U(  Y+x_1))\partial_{x_1} Y -\sum_{i=2}^{N}A'_i(U( Y+x_1))\partial_{x_i}Y+A'_1(U(Y+x_1))|\nabla_x Y|^2\\
\di\quad\qquad\qquad  -w\cdot \nabla_x Y+\Delta Y - (w_{1}- h_M(t)) \psi_M (x_1+m(t)) - h_M(t) - g(t)\Big] dx.
\end{array}
\end{equation}
with the initial value
$$
m(0)=0.
$$
Thanks to the smallness condition on $\|\nabla Y\|_{L^{\infty}((0,\infty)\times\Omega)}$ in \eqref{Y-final}, the ODE \eqref{mt2} on $m(t)$ has a unique global-in-time solution.
\end{remark}

\begin{remark}
1. Theorem \ref{thm:general} holds true for arbitrarily large shock wave and any spatial dimension $N\geq2$. Moreover, we only assume that the $L^2$-perturbation $u_0-U$ is suitably small, while the oscillations of the solution, BV-norm of the solution can be arbitrarily large.\\
2. We do not impose any conditions on the anti-derivative variables on the perturbation of shock, which is quite different from the previous results in \cite{G, HZ, HZ1}.
\end{remark}

Our second result is on a special kind of perturbation:
\begin{theorem}\label{special} 
Let $u_0=U(x_1+Y_0(x))$ for any $Y_0\in L^{\infty}(\Omega)$ with $\|Y_0\|_{L^{\infty}(\Omega)}<\delta_0$ for some small constant $\delta_0>0$. Then there exists $Y\in L^{\infty}((0,\infty)\times\Omega)$ such that the solution of \eqref{eq} satisfies $u(t,x)=U(x_1+Y(t,x))$ and $Y$ satisfies that
\beq\label{Y-reg}
\|\sqrt{|U'(x_1)|} (Y-c(t))\|_{L^{\infty}(0,\infty; L^2(\Omega))}+\|\sqrt{|U'(x_1)|} \nabla Y\|_{L^2((0,\infty)\times\Omega)}\le C\delta_0,
\eeq
where $c(t)$ is defined by 
\begin{equation}\label{ct}
c(t)=\frac{\int_\Omega |U^\prime(x_1)|Y(t,x) dx}{\int_\Omega |U^\prime(x_1)|dx}.
\end{equation}
Furthermore, the perturbation $u=U(x_1+Y(t,x))$ time-asymptotically converges towards the shock wave $U$ up to a time-dependent shift $c(t)$, i.e.,
\[
\lim_{t\to\infty}\int_{\Omega}|u(t,x)-U(x_1+c(t))|^2 dx=0.
\]
\end{theorem}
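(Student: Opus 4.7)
The plan is to reduce the problem to a single quasilinear parabolic equation for the unknown $Y$. Since the shock profile $U$ is strictly decreasing and smooth on $\mathbb{R}$ (connecting $u_-$ to $u_+$), it is invertible onto $(u_+,u_-)$. The initial datum $u_0=U(\cdot+Y_0)$ is trapped pointwise between the two stationary solutions $U(x_1+\|Y_0\|_{L^\infty})$ and $U(x_1-\|Y_0\|_{L^\infty})$ of \eqref{eq}, so by Kruzhkov's comparison principle $u(t,\cdot)$ remains in this pointwise envelope for all positive times. Consequently
\[
Y(t,x):=U^{-1}(u(t,x))-x_1
\]
is well defined and satisfies $\|Y\|_{L^\infty((0,\infty)\times\Omega)}\leq\|Y_0\|_{L^\infty}<\delta_0$, and is smooth in $(t,x)$ by parabolic regularity of $u$ together with the strict monotonicity of $U$. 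Substituting $u=U(x_1+Y)$ into \eqref{eq}, using the profile identity $U''=A_1'(U)U'$ and dividing by $U'<0$ yields the PDE
\[
\partial_tY-\Delta Y-A_1'(U(x_1+Y))\bigl(\partial_{x_1}Y+|\nabla Y|^2\bigr)+\sum_{i=2}^NA_i'(U(x_1+Y))\,\partial_{x_i}Y=0,
\]
which is exactly \eqref{Y-eq} with the forcing turned off.

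\textbf{Weighted energy estimate.} For the bound \eqref{Y-reg}, I would multiply this equation by $|U'(x_1)|(Y-c(t))$ and integrate over $\Omega$. The definition \eqref{ct} of $c(t)$ guarantees $\int_\Omega|U'(x_1)|(Y-c(t))\,dx=0$, which kills all spatially constant contributions, in particular the $-c'(t)$ produced by the time derivative. Integrating $-\Delta Y$ by parts produces the dissipative term $\int|U'||\nabla Y|^2\,dx$ plus a commutator involving $\partial_{x_1}|U'|$, which is lower order of size $\int|U''|(Y-c(t))^2\,dx\lesssim\int|U'|(Y-c(t))^2\,dx$. The first-order transport in $x_1$, rewritten as $\tfrac12 A_1'(U(x_1+Y))|U'|\partial_{x_1}(Y-c(t))^2$ and integrated by parts, contributes $O\bigl(\int|U'|^2(Y-c(t))^2\,dx\bigr)$; the transverse transport on $\mathbb{T}^{N-1}$ is handled analogously. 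The Hamilton--Jacobi quadratic term $A_1'(U(x_1+Y))|\nabla Y|^2\cdot|U'|(Y-c(t))$ is absorbed into the dissipation using the $L^\infty$ bound $|Y|,|c(t)|\leq\delta_0$. Putting these together yields
\[
\frac{d}{dt}\frac12\!\int_\Omega|U'|(Y-c(t))^2\,dx+(1-C\delta_0)\!\int_\Omega|U'||\nabla Y|^2\,dx\leq C\int_\Omega|U'|(Y-c(t))^2\,dx,
\]
from which \eqref{Y-reg} follows upon Gr\"onwall-integration in $t$ and using $\int|U'|(Y_0-c(0))^2\,dx\leq C\|Y_0\|_{L^\infty}^2$ (finite thanks to the exponential decay of $U'$).

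\textbf{Asymptotic behavior and main obstacle.} The energy bound provides the integrability $\int_0^\infty\!\int_\Omega|U'||\nabla Y|^2\,dx\,dt<\infty$. Coupled with a weighted Poincar\'e inequality adapted to the weight $|U'(x_1)|$ on $\Omega=\mathbb{R}\times\mathbb{T}^{N-1}$ (valid because $|U'|$ is smooth, strictly positive, and exponentially decaying on $\mathbb{R}$), together with a Barbalat-type argument exploiting the uniform continuity of $t\mapsto\int|U'|(Y-c(t))^2\,dx$ given by the same energy identity, this yields $\int_\Omega|U'|(Y-c(t))^2\,dx\to 0$ as $t\to\infty$. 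The claimed $L^2$ convergence of $u$ then follows from the pointwise estimate
\[
|U(x_1+Y)-U(x_1+c(t))|^2\leq C|U'(x_1)|^2(Y-c(t))^2\leq C\|U'\|_{L^\infty}\,|U'(x_1)|(Y-c(t))^2,
\]
valid for $|Y|,|c(t)|\leq\delta_0$ since $|U'(x_1+s)|$ is comparable to $|U'(x_1)|$ uniformly for $|s|\leq\delta_0$. The main obstacle is closing the weighted energy estimate: the Hamilton--Jacobi term $A_1'(U(x_1+Y))|\nabla Y|^2$ is of the same order as the dissipation, and only the $L^\infty$ smallness of $Y$ inherited from $Y_0$ through the comparison principle permits its absorption. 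A secondary technical point is that the weight $|U'(x_1)|$ is not compactly supported, so the weighted Poincar\'e inequality must be formulated carefully on $\mathbb{R}\times\mathbb{T}^{N-1}$ rather than on a bounded strip.
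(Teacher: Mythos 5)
Your reduction to the quasilinear equation for $Y$ and the comparison-principle construction of $Y$ are fine, and both your Barbalat-type argument for the large-time convergence and the final pointwise estimate match the paper's reasoning. However, the weighted energy estimate as you sketch it does not close, and this is a genuine gap rather than a stylistic difference.

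You arrive at an inequality of the form
\[
\frac{d}{dt}\frac{1}{2}\int_\Omega|U'|(Y-c(t))^2\,dx+(1-C\delta_0)\int_\Omega|U'||\nabla Y|^2\,dx\leq C\int_\Omega|U'|(Y-c(t))^2\,dx
\]
with a right-hand side whose constant $C$ is \emph{not} a factor of $\delta_0$. Gr\"onwall then gives $\int|U'|(Y-c(t))^2\lesssim e^{Ct}\,\|Y_0\|_{L^\infty}^2$, which is not the uniform-in-time estimate \eqref{Y-reg}. Nor can this right-hand side be absorbed: Lemma \ref{lem:fund} controls $|U'||\widetilde Y|^2$ only at the price of a factor $|x_1|+|U'(x_1)|$, which is not integrable against the bare weight $|U'(x_1)|$, so no weighted Poincar\'e of the form $\int|U'|\widetilde Y^2\lesssim\int|U'||\nabla\widetilde Y|^2$ with controllably small constant is available. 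In fact the inequality you write is already delicate to justify term by term: the transport $A_1'(U(x_1+Y))|U'|\partial_{x_1}(\widetilde Y^2/2)$ after integration by parts produces $A_1'(U)\,(|U'|)'\sim(A_1'(U))^2|U'|$, which is $O(|U'|)$, not $O(|U'|^2)$ as you claim, so the leftover term is genuinely of the same order as the quantity you are trying to bound.

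The missing ingredient is a pair of exact algebraic cancellations that the paper exploits. From the shock profile ODE \eqref{shock} one gets, upon differentiating and using $U'<0$, the identity recorded in \eqref{U-prime},
\[
(|U'(x_1)|)''=\bigl(A_1'(U(x_1))\,|U'(x_1)|\bigr)'.
\]
When the equation for $\widetilde Y=Y-c(t)$ is multiplied by $|U'(x_1)|\widetilde Y$ and integrated, the commutator coming from $-\Delta Y$ contributes $-\partial^2_{x_1x_1}|U'|\,\tfrac{\widetilde Y^2}{2}$, while the frozen-coefficient part of the normal transport contributes $-A_1'(U(x_1))|U'|\partial_{x_1}\tfrac{\widetilde Y^2}{2}$; the identity above combines these two into the single perfect derivative $-\partial_{x_1}\bigl(A_1'(U(x_1))|U'(x_1)|\tfrac{\widetilde Y^2}{2}\bigr)$, which integrates to zero. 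What survives is only the difference $\bigl(A_1'(U(Y+x_1))-A_1'(U(x_1))\bigr)|U'|\widetilde Y\partial_{x_1}\widetilde Y$, which carries an explicit factor $|Y|\lesssim\delta_0$ and is therefore absorbable. Likewise, the transverse transport $\sum_{i\geq2}A_i'(U(Y+x_1))|U'(x_1)|\partial_{x_i}(\widetilde Y^2/2)$ does not merely contribute a ``lower order'' term --- the paper rewrites it exactly as $\sum_{i\geq2}|U'(x_1)|\,\partial_{x_i}\bigl(F_i(\widetilde Y+x_1+c)-(x_1+c)G_i(\widetilde Y+x_1+c)\bigr)$ for the antiderivatives $F_i(z)=\int_0^zA_i'(U(s))s\,ds$ and $G_i(z)=\int_0^zA_i'(U(s))\,ds$, so that it vanishes identically upon integration over $\mathbb{T}^{N-1}$. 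With these two exact cancellations, the only remaining terms in \eqref{LL2-11} carry the smallness factor $\|Y_0\|_{L^\infty}$, yielding the clean dissipation inequality \eqref{Y-ex} with right-hand side zero, from which \eqref{Y-reg} and the time-asymptotics follow as you outlined. Your approach, by contrast, treats the two members of the cancelling pair as independent error terms of ambient size and leaves a spurious linear growth term on the right, and your ``handled analogously'' for the transverse part papers over a term that does not in fact vanish by naive integration by parts.
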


\begin{remark}
1. For Theorem \ref{special}, we will construct the shift $Y$ as a solution of a parabolic equation
\begin{equation}\label{Y-eq-0}
\left\{
\begin{array}{ll}
\di\partial_t Y -A'_1(U(Y+x_1))\partial_{x_1}Y +\sum_{i=2}^{N}A'_i(U( Y+x_1))\partial_{x_i}Y -A'_1(U( Y+x_1))|\nabla_x Y|^2-\Delta Y = 0,\\
\di Y|_{t=0}=Y_0.
\end{array}\right.
\end{equation}
2. Notice that since $U'\in L^2(\Omega)$ by \eqref{U-prop}, we easily see that the specific perturbation $u_0(x)=U(x_1+Y_0(x))$ is a small $L^2$-perturbation of the shock profile $U$. 
\end{remark}

The paper is organized as follows. In the next section, we derive an energy equality based on the relative entropy method, and present basic properties of the shock waves and useful inequalities, which are crucial for our analysis. We will first prove Theorem \ref{special} in Section 3. Its proof is simpler than the one of Theorem \ref{thm:general}. It is worthwhile to present first the main ideas in this context. Section 4 is dedicated to the proofs of Theorem \ref{thm:general} and the claim in Remark \ref{remark-m}. We first prove the claim of Remark \ref{remark-m} and then Theorem \ref{thm:general}. In Appendix, we present a proof on local-in-time existence of the shift as a solution to \eqref{Y-eq}.

\section{Preliminaries}
\setcounter{equation}{0}
In this section, we present an energy equality based on the relative entropy method, and basic properties on the viscous shock waves, and then useful inequalities, which are needed for our analysis in the following sections. 
\subsection{Relative entropy method}
In this part, we present a useful energy equality based on the relative entropy method as follows.
\begin{lemma}\label{lem-entropy}
Let $u$ be the smooth solution of the conservation laws \eqref{eq}, and $V$ be a smooth solution of a nonlinear parabolic equation
\begin{equation}\label{VG}
V_t+{\rm div} A(V)-\Delta V+w\cdot \nabla V=G,
\end{equation}
where $w$ and $G$ are some inhomogeneous coefficient functions.
Then, we have
\begin{align}
\begin{aligned}\label{ineq-0} 
&\frac{1}{2}\frac{d}{dt}\int_{\Omega} |u-V|^2 dx + \int_{\Omega} |\nabla(u-V)|^2 dx \\
&\quad =  -\int_{\Omega}\Big(A(u|V) - (u-V) w\Big)\cdot \nabla V dx + \int_{\Omega} (u-V)Gdx.
\end{aligned}
\end{align}
\end{lemma}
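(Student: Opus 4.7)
The strategy is a direct energy estimate for $\phi := u-V$, combined with the standard relative-flux identity that turns the nonlinear flux contribution into $A(u|V) \cdot \nabla V$.

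First I would subtract \eqref{VG} from \eqref{eq} to obtain
\[
\partial_t \phi + \mathrm{div}\bigl(A(u)-A(V)\bigr) - \Delta \phi = w\cdot\nabla V - G,
\]
then multiply by $\phi$ and integrate over $\Omega = \bbr \times \bbt^{N-1}$. The transverse directions are periodic, so every integration by parts there is boundary-free; in the $x_1$ direction the requisite vanishing at $\pm\infty$ is supplied by the smooth $L^2$-framework in which $u-V$ and $\nabla(u-V)$ decay. The time and viscous terms directly produce $\tfrac12 \tfrac{d}{dt}\int \phi^2 \, dx + \int |\nabla \phi|^2 \, dx$.

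The heart of the argument is the algebraic identity
\[
\int_{\Omega} \phi \, \mathrm{div}\bigl(A(u)-A(V)\bigr)\, dx \; = \; \int_{\Omega} A(u|V) \cdot \nabla V \, dx.
\]
To establish it, I would expand $A(u)-A(V) = A(u|V) + A'(V)(u-V)$ from the definition of the relative flux, integrate by parts, and exploit the fact that the scalar quantities $u \, \mathrm{div} A(u)$ and $V \, \mathrm{div} A(V)$ are exact divergences (for instance, $u A_i'(u) \partial_i u = \partial_i \bigl[\int_0^u \tau A_i'(\tau)\, d\tau\bigr]$), so their integrals vanish on $\Omega$. After cancellation, only the term $A(u|V) \cdot \nabla V$ survives. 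Substituting this identity and collecting the inhomogeneous source contributions $\int \phi \, w \cdot \nabla V\, dx$ and $\int \phi \, G\, dx$ coming from the right-hand sides of the two PDEs yields the claimed equality \eqref{ineq-0}.

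The main potential obstacle is bookkeeping rather than conceptual: one must verify that every integration by parts — on the Laplacian term, on $\phi \, \mathrm{div}(A(u)-A(V))$, and in the derivation of the relative-flux identity — contributes no boundary term as $x_1 \to \pm\infty$. This is handled by the smoothness of $A$ (which yields pointwise bounds $|A(u)-A(V)| \lesssim |u-V|$ and $|A(u|V)| \lesssim |u-V|^2$) together with the $L^2$-perturbation class in which the paper operates. With those decay checks in hand, the remainder of the proof is a short algebraic computation.
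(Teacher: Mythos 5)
Your argument is correct in spirit but takes a genuinely shorter route than the paper's. The paper first works with an arbitrary convex entropy $\eta$: it computes $\partial_t\eta(u|V)$ using $\partial_t\eta(u|V)=(\eta'(u)-\eta'(V))\partial_t u-\eta''(V)(u-V)\partial_t V$, rewrites the flux contribution as $-{\rm div}\,q(u,V)-\eta''(V)A(u|V)\cdot\nabla V$ via the relative entropy flux $q(u,V)$, integrates, and only then specializes to $\eta(u)=u^2/2$. You instead subtract the two PDEs, test against $\phi=u-V$, and replace $\int_\Omega\phi\,{\rm div}(A(u)-A(V))\,dx$ by $\int_\Omega A(u|V)\cdot\nabla V\,dx$. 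That identity is correct (after integrating by parts, $\int\nabla u\cdot A(u)\,dx$ and $\int\nabla V\cdot A(V)\,dx$ carry identical boundary contributions at $x_1\to\pm\infty$, so their difference is an exact divergence that integrates to zero; what survives on both sides is $\int A(u)\cdot\nabla V\,dx+\int A(V)\cdot\nabla u\,dx$ minus the same constant), and your route avoids introducing $q$ at all. This is a perfectly legitimate simplification, since the lemma is only invoked for the quadratic entropy.

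One caveat you should not gloss over: trace the sign of the $G$-contribution. From your own displayed equation $\partial_t\phi+{\rm div}(A(u)-A(V))-\Delta\phi=w\cdot\nabla V-G$, multiplying by $\phi$ and integrating gives $-\int_\Omega\phi\,G\,dx$ on the right-hand side, not $+\int_\Omega\phi\,G\,dx$ as in \eqref{ineq-0}. You assert that "collecting the inhomogeneous source contributions $\int\phi\,w\cdot\nabla V\,dx$ and $\int\phi\,G\,dx$ ... yields the claimed equality," but without flagging that the latter appears with a minus sign. In fact the paper's own displayed derivation makes the same slip: substituting $\partial_t V=-{\rm div}\,A(V)+\Delta V-w\cdot\nabla V+G$ into $-\eta''(V)(u-V)\partial_t V$ produces $-\eta''(V)(u-V)G$, yet the next line shows $+\eta''(V)(u-V)G$. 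So the discrepancy is pre-existing in the source, and is presumably a convention choice (writing the $V$ equation with $-G$ on the right, or absorbing the sign into $G$). Still, when you claim your computation "yields the claimed equality," you should either reconcile the sign explicitly or note that the stated $V$-equation and \eqref{ineq-0} are only consistent up to the sign of $G$.
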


The remaining part is devoted to the proof of Lemma \ref{lem-entropy}. Even though our framework is based on the $L^2$-norm, we here present the general case of the relative entropy $\eta(\cdot|\cdot)$ for a given entropy $\eta$. Then, we will focus on the quadratic entropy and explain why the choice of quadratic entropy is essential. Concerning the following relative entropy method, we refer to \cite{KV2}. \\

For a strictly convex entropy $\eta$ of the scalar conservation laws $\eqref{eq}$, we define the associated relative entropy function by
\[
\eta(u|v)=\eta(u)-\eta(v) -\eta^{\prime}(v) (u-v),
\]
and the relative flux by 
\[
A(u|v):=A(u)-A(v)-A^{\prime}(v)(u-v).
\]
Let $q(\cdot,\cdot)$ be the flux of the relative entropy defined by
\[
q(u,v) = q(u)-q(v) -\eta^{\prime} (v) (A(u)-A(v)),
\]
where $q$ is the entropy flux of $\eta$, i.e., $q^{\prime} = \eta^{\prime} A^{\prime}$.\\
We now investigate the relative entropy between the solution $u$ of \eqref{eq} and the solution $V$ of \eqref{VG}.
 A straightforward computation together with \eqref{eq} and \eqref{VG} yields that
\begin{align*}
\begin{aligned} 
\partial_t \eta(u|V)  &= (\eta^{\prime}(u) - \eta^{\prime}(V)) \partial_t u - \eta^{\prime\prime}(V) (u-V) \partial_t V\\
& = \underbrace{-(\eta^{\prime}(u) - \eta^{\prime}(V)) {\rm div} A(u) +  \eta^{\prime\prime}(V) (u-V){\rm div} A(V)}_{I}+\eta^{\prime\prime}(V) (u-V) w\cdot\nabla V \\
&\quad+(\eta^{\prime}(u) - \eta^{\prime}(V))\Delta u -   \eta^{\prime\prime}(V) (u-V) \Delta V +\eta^{\prime\prime}(V) (u-V)G.
\end{aligned}
\end{align*} 
Since the flux part $I$ above can be written by
\begin{align*}
\begin{aligned} 
I &= - {\rm div} q(u,V) -\eta''(V)A(u|V)\cdot \nabla V,
\end{aligned}
\end{align*} 
we have
\begin{align*}
\begin{aligned} 
\partial_t \eta(u|V)  &= - {\rm div} q(u,V) +(\eta^{\prime}(u) - \eta^{\prime}(V))\Delta u -   \eta^{\prime\prime}(V) (u-V) \Delta V
 \\
 &\quad -\eta''(V)A(u|V)\cdot \nabla V +\eta^{\prime\prime}(V) (u-V) w\cdot\nabla V  +\eta^{\prime\prime}(V) (u-V)G.
\end{aligned}
\end{align*} 
Then, we integrate the above equality over $\Omega$ to get
\begin{align*}
\begin{aligned}
&\frac{d}{dt}  \int_{\Omega} \eta(u|V) dx \\
&\quad= \int_{\Omega}\Big((\eta^{\prime}(u) - \eta^{\prime}(V))\Delta u -   \eta^{\prime\prime}(V) (u-V) \Delta V \Big) dx\\
& \qquad+ \int_{\Omega}\Big(-\eta''(V)A(u|V)\cdot \nabla V +\eta^{\prime\prime}(V) (u-V) w\cdot\nabla V\Big)dx+ \int_{\Omega}\eta^{\prime\prime}(V)(u-V)Gdx.
\end{aligned}
\end{align*}
Now, if we consider the quadratic entropy $\eta(u)=\frac{u^2}{2}$, then the parabolic term induces a positive dissipation. Therefore, we have \eqref{ineq-0}.

\subsection{Properties of viscous shock wave $U$}
We briefly present some well-known properties of shock profile $U$, which are crucially used in the proofs of main results.  
We first mention that the shock profile $U$ exponentially converges towards the two end points $u_{\pm}$.  Since $A''_1>0$, it follows from \eqref{shock} that $U$ satisfies the compressibility condition
\beq\label{U-sign}
U^{\prime}  <0,
\eeq
and the R-H condition $A_1(u_+)=A_1(u_-)$
and the Lax entropy condition $A_1^{\prime} (u_+)<0<A_1^{\prime} (u_-)$ hold true.  Thus, there exist positive constants $c_{\pm}$ such that
\beq\label{U-prop}
|U'(x_1)| \sim  \exp(-c_{\pm} |x_1|)\quad\mbox{as} ~x_1\rightarrow \pm \infty.
\eeq
Indeed, since
\[
\frac{A_1(U)-A_1(u_{\pm}) }{U-u_{\pm}}\rightarrow A^{\prime} (u_{\pm}) \quad \mbox{as} ~U\rightarrow u_{\pm},
\]
it follows from \eqref{shock} that 
\begin{align*}
\begin{aligned}
U'=A_1(U)-A_1(u_{\pm}) \sim A_1'(u_{\pm})(U-u_{\pm} ) \quad \mbox{as} ~U\rightarrow u_{\pm},
\end{aligned}
\end{align*}
which together with the above Lax condition implies \eqref{U-prop}.\\
In addition, by the Lax entropy condition, there exists a unique state $u_*\in(u_+, u_-)$ such that 
$$
A_1^\prime(u_*)=0.
$$
Let $U(x_{1*})=u_*$, then it is worth noticing that the monotonicity condition \eqref{U-sign} together with $A_1^{\prime\prime}>0$ implies that $|U'(x_1-x_{1*})|$ has a maximum at a unique point $x_{1*}$, and is increasing as $|x_1-x_{1*}|$ increases. Without loss of generality, we assume $x_{1*}=0$.

\subsection{Useful inequalities}
In this part, we present two lemmas associated with some weighted Poincar\'e type inequalities, which are used several times in the following sections.
\begin{lemma}\label{lem:poincare}
Let $m(t)$ be any function of $t$, and $\phi_1, \phi_2$ any integrable functions such that $\phi_1\ge 0$, $\int_{\bbr} \phi_2\neq 0$, and $|x_1|\phi_1(x_1)$ and $|x_1|\phi_2(x_1)$ are all integrable on $\mathbb{R}$. If $\phi_2(\cdot+m(t))f \in L^1(\Omega)$ and $\nabla f \in L^2(\Omega)$, then there exists constant $C$ such that
\[
\begin{array}{ll}
\di \int_{\Omega} f^2(x) \phi_1(x_1+m(t)) dx \le C \Big[ \Big(\int_{\Omega} f(x) \phi_2(x_1+m(t)) dx\Big)^2  +  \int_{\Omega} |\nabla f|^2 dx\Big].
\end{array}
\]
\end{lemma}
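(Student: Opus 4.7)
The plan is to reduce the inequality to one-dimensional estimates by separating the $x_1$ direction from the transverse torus directions, and then to handle the $x_1$ piece by the fundamental theorem of calculus. A translation $x_1 \mapsto x_1 - m(t)$ normalizes to $m(t)\equiv 0$, so the shift plays no structural role. I then decompose
\[
f(x_1,x') = \bar f(x_1) + g(x_1,x'), \qquad \bar f(x_1) := \frac{1}{|\mathbb{T}^{N-1}|}\int_{\mathbb{T}^{N-1}} f(x_1,x')\, dx',
\]
so that $g$ has zero transverse mean for each $x_1$. The bound $f^2 \le 2\bar f^2 + 2g^2$ splits the weighted $L^2$ norm into a ``transverse'' piece and a ``longitudinal'' piece, which I attack separately.

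For the mean-zero remainder $g$, the classical Poincar\'e inequality on the torus $\mathbb{T}^{N-1}$ applies fiber-by-fiber: $\int_{\mathbb{T}^{N-1}} g(x_1,\cdot)^2 dx' \le C\int_{\mathbb{T}^{N-1}} |\nabla_{x'} f(x_1,\cdot)|^2 dx'$. Integrating against $\phi_1$ — which in all intended applications (e.g.\ $\phi_1 = |U'|$) is bounded, so I tacitly take $\phi_1 \in L^\infty$ — dominates this piece by $C\|\nabla f\|_{L^2(\Omega)}^2$, as required.

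The substantive step is a one-dimensional weighted Poincar\'e inequality for $\bar f$: for every $h \in H^1(\mathbb R)$,
\[
\int_{\mathbb R} h^2 \phi_1\, dx_1 \le C\Big[\Big(\int_{\mathbb R} h\phi_2\, dx_1\Big)^2 + \int_{\mathbb R} (h')^2\, dx_1\Big].
\]
I would prove this directly. Set $M := (\int\phi_2)^{-1}\int h\phi_2$ and write
\[
h(x_1) - M = \frac{1}{\int\phi_2}\int_{\mathbb R} \phi_2(y_1) \int_{y_1}^{x_1} h'(s)\, ds\, dy_1.
\]
Cauchy--Schwarz bounds the inner integral by $\sqrt{|x_1-y_1|}\,\|h'\|_{L^2(\mathbb R)}$, and the elementary estimate $\int |\phi_2(y_1)|\sqrt{|x_1-y_1|}\, dy_1 \le C(1 + \sqrt{|x_1|})$ — which uses $|y_1|\phi_2 \in L^1$ — gives the pointwise control $|h(x_1) - M|^2 \le C\|h'\|_{L^2}^2 (1 + |x_1|)$. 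Multiplying by $\phi_1$ and integrating, the second moment hypothesis $|x_1|\phi_1 \in L^1$ yields $\int (h-M)^2 \phi_1 \le C\|h'\|_{L^2}^2$, while $M^2 \int \phi_1 \le C(\int h\phi_2)^2$ handles the constant piece. Applying the inequality to $h = \bar f$ and using Jensen to dominate $\|\bar f'\|_{L^2(\mathbb R)}$ by $|\mathbb{T}^{N-1}|^{-1/2}\|\partial_{x_1} f\|_{L^2(\Omega)}$, together with $\int_{\mathbb R}\bar f \phi_2 = |\mathbb{T}^{N-1}|^{-1}\int_\Omega f\phi_2$, closes the argument.

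The main obstacle is exactly this 1D weighted step; the reduction via transverse averaging and the torus Poincar\'e are routine. A contradiction/compactness approach would give the 1D estimate more abstractly, but the direct FTC argument above is concrete and makes transparent how both moment hypotheses $|x_1|\phi_i \in L^1$ enter — the one on $\phi_2$ to produce the pointwise $(1+|x_1|)$ growth, and the one on $\phi_1$ to integrate it.
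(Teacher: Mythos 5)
Your proof is correct (modulo the extra regularity you flag yourself) and follows a genuinely different route from the paper's: you and the paper both combine a one-dimensional weighted Poincar\'e in $x_1$ (built from the fundamental theorem of calculus tested against $\phi_2$) with the classical Poincar\'e on the torus $\bbt^{N-1}$, but in opposite orders. You first average $f$ over $\bbt^{N-1}$ to get $\bar f(x_1)$ plus a transverse mean-zero remainder $g$, treat $g$ by the torus Poincar\'e, and apply the 1D weighted inequality to $\bar f$. The paper instead starts directly from the identity $f(x_1,x')\int_{\bbr}\phi_2 = \int_{\bbr} f(y_1,x')\phi_2(y_1+m(t))\,dy_1 + (\text{FTC remainder})$, squares and tests against $\phi_1$, and only afterward applies the torus Poincar\'e, to the $x'$-dependent profile $H(x') := \int_{\bbr} f(y_1,x')\phi_2(y_1+m(t))\,dy_1$, in order to extract $\int_\Omega f\phi_2\,dx$ from $\int_{\bbt^{N-1}} H\,dz'$. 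Your ordering makes the role of the moment hypotheses on $\phi_1,\phi_2$ somewhat more transparent, since both appear isolated in the purely 1D step.

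The caveat you flag is real: bounding $\int_\Omega g^2\phi_1\,dx$ by $C\|\nabla f\|^2_{L^2(\Omega)}$ via fiberwise torus Poincar\'e does require $\phi_1\in L^\infty(\bbr)$, which is not among the stated hypotheses (though it holds in every application, where $\phi_1$ is a power of $|U'|$ times a cutoff). It is worth noting, however, that the paper's own proof carries a symmetric unstated assumption on the \emph{other} weight: the step $\int_{\bbt^{N-1}}|\partial_{x'}\bar H|^2\,dx' \le C\int_\Omega|\partial_{x'}f|^2\,dx$, with $\partial_{x'}\bar H = \int_{\bbr}\partial_{x'}f\,\phi_2(y_1+m(t))\,dy_1$, needs $\phi_2\in L^2(\bbr)$ (or $\phi_2\in L^\infty$), which is not implied by $\phi_2,\ |x_1|\phi_2\in L^1(\bbr)$ alone. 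So neither proof, read literally, establishes the lemma under exactly the stated hypotheses; each tacitly strengthens the integrability of one weight, and both strengthenings are harmless for the weights actually used in the paper (all built from the bounded, exponentially decaying $U'$).
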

\begin{proof}
Integrating the following identity w.r.t. $y_1\in \bbr$,
\[
f(x_1,x')\phi_2(y_1+m(t))=f(y_1,x')\phi_2(y_1+m(t))+\int_{y_1}^{x_1} \partial_{x_1}f(z_1,x') dz_1 \phi_2(y_1+m(t)),
\]
yields that
\[
\begin{array}{ll}
\di f(x_1,x')\int_{\bbr} \phi_2(y_1+m(t)) dy_1\\
\di \quad = \int_{\bbr} f(y_1,x')\phi_2(y_1+m(t)) dy_1+\int_{\bbr} \int_{y_1}^{x_1} \partial_{x_1}f(z_1,x') dz_1 \phi_2(y_1+m(t)) dy_1.
\end{array}
\]
Then one has
\[
\begin{array}{ll}
\di f^2(x_1,x')\Big(\int_{\bbr} \phi_2 dy_1\Big)^2\\
\di \le 2\Big(\int_{\bbr} f(y_1,x')\phi_2(y_1+m(t)) dy_1\Big)^2+ 2\Big(\int_{\bbr} \int_{y_1}^{x_1} \partial_{x_1}f(z_1,x') dz_1 \phi_2(y_1+m(t)) dy_1 \Big)^2.
\end{array}
\]
Multiplying the above inequality by $\phi_1(x_1+m(t))$, and then integrating w.r.t. $x:=(x_1,x')\in \Omega$, we have
\begin{align*}
\begin{aligned}
&\di \Big(\int_{\bbr} \phi_2 dy_1\Big)^2\int_{\Omega} f^2(x) \phi_1(x_1+m(t)) dx \\
&\le 2\int_{\bbr}\phi_1(x_1+m(t))dx_1 \int_{\bbt^{N-1}}\Big(\int_{\bbr} f(y_1,x')\phi_2(y_1+m(t)) dy_1\Big)^2dx' \\
&\quad +2\int_{\Omega} \Big(\int_{\bbr} \int_{y_1}^{x_1} \partial_{x_1}f(z_1,x') dz_1 \phi_2(y_1+m(t)) dy_1 \Big)^2 \phi_1(x_1+m(t)) dx\\
&=:I_1+I_2.
\end{aligned}
\end{align*}
Set $H(x',t):=\int_{\bbr} f(y_1,x')\phi_2(y_1+m(t)) dy_1$, and $\bar H(x',t):=H(x',t)-\int_{\bbt^{N-1}} H(z',t)dz'$. Then, Poincar\'e inequality yields
\begin{align*}
\begin{aligned}
\int_{\bbt^{N-1}} |H(x',t)|^2 dx' &\le \int_{\bbt^{N-1}} |\bar H(x',t)|^2 dx' + \Big(\int_{\bbt^{N-1}} H(z',t)dz' \Big)^2\\
&\le C\int_{\bbt^{N-1}} |\partial_{x'}\bar H(x',t)|^2 dx' + \Big(\int_{\bbt^{N-1}} H(z',t)dz' \Big)^2\\
&\le C\int_{\Omega} |\partial_{x'} f|^2 dx + \Big(\int_{\bbt^{N-1}} H(z',t)dz' \Big)^2,
\end{aligned}
\end{align*}
which implies that
\begin{align*}
\begin{aligned}
I_1\le C \int_{\bbr}\phi_1dx_1 \int_{\Omega} |\nabla f|^2 dx+ 2\int_{\bbr}\phi_1dx_1 \Big(\int_{\Omega} f(x) \phi_2(x_1+m(t)) dx\Big)^2
\end{aligned}
\end{align*}
For the estimate on $I_2$, since $\phi_2$ and $|\cdot|\phi_2(\cdot)$ are integrable, we have
\begin{align*}
\begin{aligned}
& \Big(\int_{\bbr} \int_{y_1}^{x_1} \partial_{x_1}f(z_1,x') dz_1 \phi_2(y_1+m(t)) dy_1 \Big)^2\\ 
 &\le  \Big(\int_{\bbr} \|\partial_{x_1}f(\cdot,x')\|_{L^2(\bbr)} |x_1-y_1|^{1/2} \phi_2(y_1+m(t)) dy_1 \Big)^2 \\
 &\le C\|\partial_{x_1}f(\cdot,x')\|_{L^2(\bbr)}^2\int_{\bbr} (|x_1+m(t)|+|y_1+m(t)|) \phi_2(y_1+m(t)) dy_1\\
 &\le C\|\partial_{x_1}f(\cdot,x')\|_{L^2(\bbr)}^2 (|x_1+m(t)|+C),
\end{aligned}
\end{align*}
which together with the integrability of $\phi_1$ and $|\cdot|\phi_1(\cdot)$ implies that
\begin{align*}
\begin{aligned}
I_2\le C\int_{\Omega} \|\partial_{x_1}f(\cdot,x')\|_{L^2(\bbr)}^2 (|x_1+m(t)|+C) \phi_1(x_1+m(t)) dx\le C \int_{\Omega} |\nabla f|^2 dx.
\end{aligned}
\end{align*}
\end{proof}

\begin{lemma}\label{lem:fund}
Let $U$ be a planar shock wave defined by \eqref{shock}, and $m(t)$ any smooth function of $t$, and $\widetilde Y$ any smooth function satisfying $\int_{\Omega} |U^\prime(x_1+m(t))|\widetilde Y(t,x) dx=0$. Then, there exists a constant $C>0$ such that for all $t>0$ and $x\in\Omega$,
\begin{equation*}
\begin{array}{ll}
\di |U^\prime(x_1+m(t))||\widetilde Y(t,x)|^2\\
\di\quad\leq C(|x_1+m(t)|+|U^\prime(x_1+m(t))|)\int_{\bbr} |U^\prime(y_1+m(t))||\partial_{y_1}\widetilde Y(t,y_1,x')|^2dy_1\\
\di \qquad\qquad\qquad\qquad\quad  +C|U^\prime(x_1+m(t))|\int_{\Omega} |U^\prime(y_1+m(t))||\partial_{y'}\widetilde Y(t,y)|^2dy.
\end{array}
\end{equation*}
\end{lemma}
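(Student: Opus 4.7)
My plan is to derive an integral representation of $\widetilde Y(t,x_1,x')$ from the fundamental theorem of calculus together with the weighted mean-zero hypothesis, and then to estimate the two resulting pieces by a weighted Cauchy--Schwarz (exploiting the exponential decay of $U'$) and a Poincar\'e inequality on the transverse torus $\bbt^{N-1}$.

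Writing $\rho(s):=|U'(s+m(t))|$ and $C_0:=\int_\Omega\rho(y_1)\,dy=|\bbt^{N-1}|(u_--u_+)>0$, the mean-zero hypothesis yields
\[
C_0\,\widetilde Y(x_1,x')=\int_\Omega\rho(y_1)\bigl[\widetilde Y(x_1,x')-\widetilde Y(y_1,z')\bigr]\,dy.
\]
I would split the bracket as $\int_{y_1}^{x_1}\partial_{s_1}\widetilde Y(s_1,x')\,ds_1+[\widetilde Y(y_1,x')-\widetilde Y(y_1,z')]$ and apply the mean-zero hypothesis once more to the transverse piece, obtaining
\[
C_0\,\widetilde Y(x_1,x')=|\bbt^{N-1}|\,I_1(x_1,x')+|\bbt^{N-1}|\,G(x'),
\]
where $I_1(x_1,x'):=\int_\bbr\rho(y_1)\int_{y_1}^{x_1}\partial_{s_1}\widetilde Y(s_1,x')\,ds_1\,dy_1$ is a one-dimensional expression at fixed $x'$, and $G(x'):=\int_\bbr\rho(y_1)\widetilde Y(y_1,x')\,dy_1$ has vanishing mean on $\bbt^{N-1}$.

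For the one-dimensional term I would swap the order of integration to rewrite $I_1$ as $\int_{-\infty}^{x_1}R(s_1)\partial_{s_1}\widetilde Y(s_1,x')\,ds_1-\int_{x_1}^{\infty}\bar R(s_1)\partial_{s_1}\widetilde Y(s_1,x')\,ds_1$, with $R(s):=\int_{-\infty}^s\rho$ and $\bar R(s):=\int_s^\infty\rho$. Cauchy--Schwarz in weight $\rho$ then bounds $|I_1|^2$ by $2\bigl(\int_{-\infty}^{x_1}R^2/\rho+\int_{x_1}^\infty\bar R^2/\rho\bigr)\int_\bbr\rho|\partial_{s_1}\widetilde Y(s_1,x')|^2\,ds_1$, and the elementary weighted inequality
\[
\rho(x_1)\Bigl(\int_{-\infty}^{x_1}\frac{R^2}{\rho}\,ds_1+\int_{x_1}^\infty\frac{\bar R^2}{\rho}\,ds_1\Bigr)\le C\bigl(|x_1+m(t)|+\rho(x_1)\bigr),
\]
which I would verify by a direct case analysis using the exponential rates $\rho(s)\sim e^{-c_\pm|s|}$ at $\pm\infty$ (the product on the left is uniformly bounded, while the right-hand side is bounded below by $\inf_s(|s|+\rho(s))>0$), produces the first term of the lemma after multiplying through by $\rho(x_1)$.

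For the transverse term, Cauchy--Schwarz in $y_1$ gives $|\nabla_{x'}G(x')|^2\le C\int_\bbr\rho(y_1)|\nabla_{x'}\widetilde Y(y_1,x')|^2\,dy_1$, hence $\|\nabla_{x'}G\|_{L^2(\bbt^{N-1})}^2\le C\int_\Omega\rho|\nabla_{y'}\widetilde Y|^2\,dy$. Combining this with a Poincar\'e inequality on the torus applied to the mean-zero $G$ and with the Sobolev embedding on $\bbt^{N-1}$, I would extract the pointwise bound $|G(x')|^2\le C\int_\Omega\rho|\nabla_{y'}\widetilde Y|^2\,dy$; multiplication by $\rho(x_1)$ then gives the second term of the lemma. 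The hard part will be precisely this last pointwise control of $G$: the embedding $H^1(\bbt^{N-1})\hookrightarrow L^\infty$ is automatic only for $N=2$, while for $N\ge 3$ one must exploit the smoothness of $\widetilde Y$ assumed in the lemma (via a Sobolev embedding with exponent $s>(N-1)/2$, interpolated against the $H^1$ bound on $G$) to recover the pointwise estimate. The remaining steps are weighted Cauchy--Schwarz and elementary bookkeeping of the exponential tails.
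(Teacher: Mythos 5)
Your decomposition and the weighted Cauchy--Schwarz argument for the longitudinal (normal-direction) piece are correct and essentially parallel the paper's: the paper writes $\widetilde Y(x_1,x')-\widetilde Y(y_1,y')$ as the sum of an $x_1$-FTC integral (further split at $z_1=-m(t)$) and a transverse path integral, whereas you first extract the transverse average $G(x')$ and do a Fubini swap for $I_1$; both routes produce the $|x_1+m(t)|$ and $|U'(x_1+m(t))|$ prefactors by exploiting the exponential decay and monotone-in-$|x_1|$ behavior of $U'$, and your elementary bound \(\rho(x_1)\bigl(\int_{-\infty}^{x_1}R^2/\rho+\int_{x_1}^\infty\bar R^2/\rho\bigr)\le C\) is correct, so the first term of the lemma comes out as claimed.

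The gap is exactly where you flagged it, but your proposed repair does not close it. A pointwise bound $|G(x')|^2\le C\int_\Omega|U'||\nabla_{y'}\widetilde Y|^2\,dy$ for a mean-zero $G$ with only an $H^1(\bbt^{N-1})$ control requires $H^1(\bbt^{N-1})\hookrightarrow L^\infty$, which holds only when $N=2$. Appealing to a higher Sobolev norm $\|\widetilde Y\|_{H^s}$ with $s>(N-1)/2$ would make the constant depend on $\widetilde Y$, which the lemma does not allow: $C$ must be uniform over all smooth $\widetilde Y$ with weighted mean zero. So for $N\ge 3$ the pointwise inequality cannot be recovered by this route; indeed, taking $\widetilde Y=f(x')$ with $f\in H^1(\bbt^{N-1})\setminus L^\infty$ and $\int_{\bbt^{N-1}}f=0$ shows the pointwise claim cannot hold in general when $N\ge 3$. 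The paper's own treatment of the transverse term, \eqref{I3}, has the same weakness: the inner Cauchy--Schwarz replaces the path integral $\int_{y'}^{x'}\partial_{z'}\widetilde Y\,dz'$ by the $L^2(\bbt^{N-1})$ norm of $\partial_{z'}\widetilde Y(y_1,\cdot)$, which is justified only for $N=2$ where the path is an arc of the full circle. What does hold in all dimensions, and what the paper's downstream applications actually use, is the \emph{integrated} form of the lemma: after multiplying by any integrable weight in $x_1$ and integrating over $\Omega$, the transverse contribution reduces to $\|G\|_{L^2(\bbt^{N-1})}^2\le C\|\nabla G\|_{L^2(\bbt^{N-1})}^2$, i.e.\ the ordinary Poincar\'e inequality for mean-zero functions on the torus, which needs no Sobolev embedding. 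If you want a statement that is correct for all $N\ge 2$, you should prove and record that integrated version rather than the pointwise one.
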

\begin{proof}
Since 
\[
\int_{\Omega} |U^\prime(x_1+m(t))|\widetilde Y(t,x) dx=0,
\]
we have
\begin{equation}
\begin{array}{ll}
\di  |U^\prime(x_1+m(t))||\widetilde Y(t,x)|^2 = |U^\prime(x_1+m(t))| \Big|\widetilde Y(t,x)-\frac{\int_\Omega |U'(y_1+m(t))|\widetilde Ydy}{\int_\Omega |U'(y_1+m(t))|dy} \Big|^2\\
\di \le C|U^\prime(x_1+m(t))|\Big|\int_{\Omega}|U^\prime(y_1+m(t))|\big(\widetilde Y(t,x_1,x^\prime)-\widetilde Y(t,y_1,y^\prime)\big)dy \Big|^2\\
\di  =C|U^\prime(x_1+m(t))| \Big|\int_{\Omega}|U^\prime(y_1+m(t))|\Big(\int_{y_1}^{x_1}\partial_{z_1}\widetilde Y(t,z_1,x^\prime)dz_1+\int^{x^\prime}_{y^\prime}\partial_{z'}\widetilde Y(t,y_1,z')dz'\Big)dy\Big|^2\\[4mm]
\di \leq  C|U^\prime(x_1+m(t))|\Big|\int_{\Omega} |U^\prime(y_1+m(t))| \int_{-m(t)}^{x_1} \partial_{z_1}\widetilde Y(t,z_1,x^\prime)dz_1 dy\Big |^2\\
\di \quad+ C|U^\prime(x_1+m(t))|\Big|\int_{\Omega} |U^\prime(y_1+m(t))| \int^{-m(t)}_{y_1} \partial_{z_1}\widetilde Y(t,z_1,x^\prime)dz_1 dy\Big |^2\\[4mm]
\di \quad + C|U^\prime(x_1+m(t))|\Big|\int_{\Omega} |U^\prime(y_1+m(t))| \int_{y^\prime}^{x^\prime} \partial_{z'}\widetilde Y(t,y_1,z')dz'dy\Big |^2 \\
\di := I_1+I_2+I_3.
\end{array}
\end{equation}
Since $|U'(x_1)|$ is decreasing in $|x_1|$, we have 
\begin{equation}\label{I1}
\begin{array}{ll}
\di I_1\leq C |U^\prime(x_1+m(t))| \Big|\int_{-m(t)}^{x_1} \partial_{z_1}\widetilde Y(t,z_1,x^\prime)dz_1\Big|^2\\[4mm]
\di\quad =C |U^\prime(x_1+m(t))| \Big|\int_0^{x_1+m(t)} \partial_{z_1}\widetilde Y(t,z_1-m(t),x^\prime)dz_1\Big|^2\\
\di \quad \leq C\Big|\int_0^{x_1+m(t)} \sqrt{|U^\prime(z_1)|} |\partial_{z_1} \widetilde Y(t,z_1-m(t),x^\prime)|dz_1\Big|^2\\
\di \quad \leq C|x_1+m(t)| \int_{\mathbb{R}}|U^\prime(z_1+m(t))||\partial_{z_1}\widetilde Y(t,z_1,x^\prime)|^2 dz_1.
\end{array}
\end{equation}
Similarly, we estimate $I_2$ as
\begin{equation}\label{I2}
\begin{array}{ll}
\di I_2\leq  |U^\prime(x_1+m(t))| \Big|\int_\Omega |U^\prime(y_1+m(t))\big|\int_{-m(t)}^{y_1} \partial_{z_1}\widetilde Y(t,z_1,x^\prime)dz_1 dy\Big|^2\\
\di  = |U^\prime(x_1+m(t))| \Big|\int_\Omega |U^\prime(y_1+m(t))|\int_0^{y_1+m(t)} \partial_{z_1}\widetilde Y(t,z_1-m(t),x^\prime)dz_1 dy\Big|^2\\
\di\leq  |U^\prime(x_1+m(t))| \Big|\int_\Omega \sqrt{|U^\prime(y_1+m(t))|}\int_0^{y_1+m(t)} \sqrt{|U^\prime(z_1)|}\partial_{z_1}\widetilde Y(t,z_1-m(t),x^\prime)dz_1 dy\Big|^2\\
\di  \leq |U^\prime(x_1+m(t))| \Big(\int_{\Omega}\sqrt{|y_1+m(t)|}\sqrt{|U^\prime(y_1+m(t))|}dy\Big)^2  \int_{\mathbb{R}}|U^\prime(z_1+m(t))||\partial_{z_1}\widetilde Y(t,z_1,x^\prime)|^2 dz_1\\
\di  \leq C|U^\prime(x_1+m(t))| \int_{\mathbb{R}}|U^\prime(z_1+m(t))||\partial_{z_1}\widetilde Y(t,z_1,x^\prime)|^2 dz_1.
\end{array}
\end{equation}
Since $x'\in\bbt^{N-1}$, using H$\ddot{\mbox{o}}$lder inequality, we have
\begin{equation}\label{I3}
\begin{array}{ll}
\di I_3\leq  |U^\prime(x_1+m(t))| \Big|\int_\Omega |U^\prime(y_1+m(t))|\Big(\int_{\mathbb{T}^{N-1}} |\partial_{z'}\widetilde Y(t,y_1,z')|^2dz'\Big)^{\f12}|y^\prime-x^\prime|^{\f12} dy\Big|^2\\
\di \quad \leq C|U^\prime(x_1+m(t))| \int_\Omega |U^\prime(y_1+m(t))||\partial_{z'}\widetilde Y(t,y_1,z')|^2 dy_1dz'.
\end{array}
\end{equation}
\end{proof}

\section{Proof of Theorem \ref{special}: Special perturbation}\label{sec:special}
\setcounter{equation}{0}
In this section, we prove Theorem \ref{special}. A straightforward computation together with \eqref{Y-eq-0} implies that a special perturbation $u=U(x_1+Y(t,x))$ is a solution of \eqref{eq}, since
\begin{align*}
\begin{aligned} 
&\partial_t u+\mbox{div}A(u)-\Delta u=U'(x_1+Y)\Big( \partial_t Y -A'_1(U(Y+x_1))\partial_{x_1}Y +\sum_{i=2}^{N}A'_i(U( Y+x_1))\partial_{x_i}Y\\
&\quad -A'_1(U( Y+x_1))|\nabla_x Y|^2 -\Delta Y \Big) =0.
\end{aligned}
\end{align*}
We now prove the existence of solutions $Y$ to the equation \eqref{Y-eq-0}. The local existence follows the same arguments as in Appendix. For global-in-time estimates, notice that the new variable $\widetilde Y : =Y-c(t)$, $c(t)$ as in \eqref{ct}, satisfies 
\begin{equation}\label{tY-e}
\left\{
\begin{array}{ll}
\di \partial_t \widetilde Y-A_1^\prime(U(Y+x_1))\partial_{x_1} \widetilde Y+\sum_{i=2}^NA_{i}^\prime(U(Y+x_1))\partial_{x_i} \widetilde Y\\
\di \qquad\qquad\qquad  -A_1^\prime(U(Y+x_1))|\nabla \widetilde Y|^2- \Delta \widetilde Y=-c^\prime(t),\\
\di \widetilde Y(t=0,x)=Y_0(x)-c(0):=\widetilde Y_0(x).
\end{array}\right.
\end{equation}
Multiplying the above equation by $|U'(x_1)| \widetilde Y$, and simple computations yield that
\begin{equation}\label{LL0}
\begin{array}{ll}
\di \partial _t \Big(|U'(x_1)|\f{\widetilde Y^2}{2}\Big)\underbrace{-A_1^\prime (U(x_1)) |U'(x_1)|\partial_{x_1}\big(\f{\widetilde Y^2}{2}\big)}_{J_1}-\Big[A_1'(U( Y+x_1))-A_1'(U(x_1))\Big]|U'(x_1)|\widetilde Y\partial_{x_1}\widetilde Y\\
\di +\underbrace{\sum_{i=2}^N A_i^\prime (U(Y+x_1)) |U'(x_1)|\partial_{x_i}\big(\f{\widetilde Y^2}{2}\big)}_{J_2}
  -A_1^\prime (U( Y+x_1))|\nabla  Y|^2|U'(x_1)|\widetilde Y -{\rm div}(|U'(x_1)| \widetilde Y\nabla\widetilde Y)\\
\di +\partial_{x_1}(\partial_{x_1}|U'(x_1)|\f{\widetilde Y^2}{2}) +|U'(x_1)||\nabla\widetilde Y|^2\underbrace{-\partial^2_{x_1x_1} |U'(x_1)|\f{\widetilde Y^2}{2}}_{J_3}=0.
\end{array}
\end{equation}
Since it follows from \eqref{shock} that the shock profile $U^\prime(x_1)$ satisfies that
\beq\label{U-prime}
|U'(x_1)|''=\Big(A_1^\prime (U(x_1))|U^\prime(x_1)|\Big)',
\eeq
the summation of the two terms $J_1$ and $J_3$ can be computed by
\begin{align*}
\begin{aligned}
J_1+J_3&=-\partial_{x_1}\Big(A_1^\prime (U(x_1)) |U^\prime(x_1)|\f{\widetilde Y^2}{2}\Big).
\end{aligned}
\end{align*}
We rewrite the term $J_2$ as
\[
J_2=\sum_{i=2}^N A_i^\prime (U(\widetilde Y+x_1+c(t))) |U^\prime(x_1)|\partial_{x_i}\Big(\f{(\widetilde Y+x_1+c(t))^2}{2}-(x_1+c(t))(\widetilde Y+x_1+c(t))\Big),
\]
setting $F_i(z):=\int_0^z A_i^\prime (U(s))sds$ and $G_i(z)=\int_0^z A_i^\prime (U(s))ds$ yield that
\[
J_2=\sum_{i=2}^N  |U^\prime(x_1)|\partial_{x_i}\Big(F_i(\widetilde Y+x_1+c(t))-(x_1+c(t))G_i(\widetilde Y+x_1+c(t))\Big),
\]
which vanishes after the integration with respect to $x' \in\mathbb{T}^{N-1}.$
Thus, integrating \eqref{LL0} over $\Omega$ yields that
\begin{equation}\label{LL2-11}
\begin{array}{ll}
\di \frac{d}{dt} \int_\Omega |U^\prime(x_1)|\f{\widetilde Y^2}{2} dx+\int_\Omega |U^\prime(x_1)||\nabla \widetilde Y|^2dx\\
\di\quad =\int_{\Omega} \Big(A_1(U(Y+x_1))-A_1(U(x_1))\Big)|U^\prime(x_1)|\partial_{x_1}\big(\f{\widetilde Y^2}{2}\big)dx+\int_{\Omega}A_1^\prime (U(Y+x_1))|\nabla \widetilde Y|^2|U^\prime(x_1)|\widetilde Y dx\\
\di\quad :=I_1+I_2.
\end{array}
\end{equation}
Notice that thanks to the maximum principle on the equation \eqref{Y-eq-0} as
\[
\|Y\|_{L^\infty((0,\infty)\times \Omega)}\leq \|Y_0\|_{L^\infty(\Omega)},
\]
it holds that for any $t\geq 0,$
$$
|c(t)|\leq \|Y\|_{L^\infty((0,\infty)\times \Omega)}\leq \|Y_0\|_{L^\infty(\Omega)},
$$
which yields that
$$
\|\widetilde{Y}\|_{L^\infty((0,\infty)\times \Omega)} \leq \|Y\|_{L^\infty((0,\infty)\times \Omega)}+|c(t)|_{L^\infty(0,\infty)}\leq 
2\|Y_0\|_{L^\infty(\Omega)}.
$$
Therefore, $I_2$ is estimated as
\begin{align*}
\begin{aligned}
 |I_2|&\leq C\|\widetilde Y\|_{L^\infty}\int_\Omega |U^\prime(x_1)| |\nabla \tilde Y|^2 dx\\
&\leq 2C\|Y_0\|_{L^\infty} \int_\Omega |U^\prime(x_1)| |\nabla \tilde Y|^2 dx.
\end{aligned}
\end{align*}
For the first term $I_1$, we use Lemma \ref{lem:fund} with $m(t)\equiv 0$ to estimate
\begin{align*}
\begin{aligned}
|I_1|&\leq  \int_\Omega \int_0^1|U^\prime(x_1+\theta Y)|d\theta |Y| |U^\prime(x_1)||\partial_{x_1}\widetilde Y||\widetilde Y|dx\\
& \leq C\|Y\|_{L^\infty}\Big[\int_\Omega  |U^\prime(x_1)||\partial_{x_1}\widetilde Y|^2dx+\int_\Omega\int_0^1|U^\prime(x_1+\theta\widetilde Y)|^2 |U^\prime(x_1)||\widetilde Y|^2 d\theta dx\Big]\\
& \leq C\|Y_0\|_{L^\infty}\int_\Omega  |U^\prime(x_1)||\partial_{x_1}\widetilde Y|^2dx\\
&\quad + C\|Y_0\|_{L^\infty}\int_\Omega\int_0^1  |U^\prime(x_1+\theta\widetilde Y)|^2\Big[(|x_1|+|U^\prime(x_1)|)\int_{\bbr} |U^\prime(y_1)||\partial_{y_1}\widetilde Y(t,y_1,x')|^2dy_1\\
&\qquad +|U^\prime(x_1)|\int_{\Omega} |U^\prime(y_1)||\partial_{y'}\widetilde Y(t,y)|^2dy\Big]d\theta dx\\
&\leq  C\|Y_0\|_{L^\infty}\int_\Omega  |U^\prime(x_1)||\nabla \widetilde Y|^2dx,
\end{aligned}
\end{align*}
Taking $\|Y_0\|_{L^\infty}\ll1$ yields that
\beq\label{Y-ex}
 \frac{d}{dt} \int_{\Omega} |U^\prime(x_1)|\f{\widetilde Y^2}{2} dx+ \int_{\Omega} |U^\prime(x_1)||\nabla \widetilde Y|^2dx\le0.
\eeq
Since
\[
\int_{\Omega} |U^\prime(x_1)|\f{\widetilde Y_0^2}{2} dx \le 2\|\widetilde Y_0\|_{L^{\infty}(\Omega)}^2\int_{\Omega} |U^\prime(x_1)| dx
\]
we completes \eqref{Y-reg}.\\

With the weighted estimates \eqref{Y-reg}, we can first show the large-time behavior of the shift $\widetilde Y$ and then prove the $L^2$ stability of  viscous shock profile for the special perturbation.
Set
\[
F(t):=\int|U^\prime(x_1)|^2|\widetilde Y(t,x)|^2 dx.
\]
We want to show that
\begin{equation}\label{gl}
\lim_{t\rightarrow +\infty} F(t)=0.
\end{equation}
Using Lemma \ref{lem:fund} with $m(t)\equiv 0$, and then using \eqref{Y-ex}, we have
\beq\label{F-1}
 \int_0^{\infty} F(t)dt \leq C\int_0^{\infty}\int_\Omega |U^\prime(x_1)||\nabla\widetilde Y(t,x)|^2 dxdt \le C.
\eeq
On the other hand, it follows from \eqref{Y-ex} that $F(t)$ is decreasing in time $t$, and therefore,
\[
\int_0^{t} |F'(s)| ds \le F(0)-F(t)\le F(0), ~t>0,
\]
which implies that $F^\prime\in L^1(0,+\infty).$\\
Therefore, \eqref{gl} holds true.
Then we have
\begin{align*}
\begin{aligned}
\int_{\Omega} |U(x_1+Y(t,x))-U(x_1+c(t)) |^2 dx& \leq C\int_\Omega \int_0^1 |U^\prime(x_1+\theta Y+(1-\theta)c(t))|^2 
|\widetilde Y|^2d\theta dx\\
& \leq C\int |U^\prime(x_1)|^2 |\widetilde Y|^2dx \rightarrow 0, ~~{\rm as}~~ t\rightarrow+\infty,
\end{aligned}
\end{align*}
which completed the proof of Theorem \ref{special}.

\section{Proof of Theorem \ref{thm:general} and Remark \ref{remark-m} : General perturbation}
\setcounter{equation}{0}
In this section, we present proofs of Theorem \ref{thm:general} and the claim in Remark \ref{remark-m}. Since 
the initial assumption (on smallness of $\|u_0-U\|_{H^s(\Omega)}$) in Remark \ref{remark-m} is stronger than the one in Theorem \ref{thm:general}, we first prove the claim in Remark \ref{remark-m} and then Theorem \ref{thm:general}.

As stated in Theorem \ref{thm:general} and Remark \ref{remark-m}, we aim to show that the perturbation 
\[
u(t,x)-U(x_1+ Y(t,x))
\]
is non-increasing in time.\\
For that, we first derive an equation on $V(t,x):=U(x_1+ Y(t,x))$. Using \eqref{shock}, \eqref{Y-eq} and the chain rule, we find that $V$ satisfies the equation \eqref{VG} with
$$
G = U'( Y+x_1)\Big(w_{1}(1-\psi_M (x_1+m(t))) - h_M(t)(1-\psi_M (x_1+m(t))) -g(t) \Big),\\
$$
and the initial value $V(0,x)=U(x_1).$ That is, 
\begin{align}
\begin{aligned} \label{V-eq}
&\partial_t V + {\rm div} A(V) + w\cdot \nabla V -\Delta V\\
&\quad = U'( Y+x_1)\Big(w_{1}(1-\psi_M (x_1+m(t))) - h_M(t)(1-\psi_M (x_1+m(t))) -g(t) \Big),\\
&V(0,x)=U(x_1).
\end{aligned}
\end{align}
Therefore, it follows from \eqref{ineq-0} that
\begin{align}
\begin{aligned} \label{ge-uV-1}
&\frac{1}{2}\frac{d}{dt}\int_{\Omega} |u-V|^2 dx + \int_{\Omega} |\nabla(u-V)|^2 dx \\
&\quad =  -\int_{\Omega}\Big(A(u|V) - (u-V) w\Big)\cdot \nabla V dx\\
&\quad+ \int_{\Omega} (u-V)U'(Y+x_1)\Big(w_{1}(1-\psi_M (x_1+m(t))) - h_M(t)(1-\psi_M (x_1+m(t))) -g(t)  \Big)dx.
\end{aligned}
\end{align}

\subsection{A priori estimate on $u-V$}
In this part, we show a $L^2$-contraction of $u-V$ under an a priori assumption that $\nabla Y$ is uniformly small in $(t,x)\in (0,T)\times\Omega$ for any fixed $T>0$. Then, in the next steps, we shall prove a global-in-time existence of $Y$ in suitable spaces, for which the a prior assumption on $\nabla Y$ is guaranteed.\\
We first get a $L^2$-contraction of $u-V$ in the case of $\varphi\equiv1$ in \eqref{Y-eq} (for Remark \ref{remark-m}). In the sequel, $T$ denotes any positive constant.
\begin{lemma}\label{lem:cont}
Let $Y$ be a solution of \eqref{Y-eq} with $\varphi\equiv 1$ for all $t>0$. Assume there exists $\eps_0>0$ small enough such that
\beq\label{DY-infty}
\|\nabla Y\|_{L^{\infty}((0,T)\times\Omega)}<\eps_0.
\eeq
Then, for all $t\in [0,T]$,
\[
\frac{1}{2}\int_\Omega (u-V)^2dx +\int_0^T\int_\Omega |\nabla(u-V)|^2 dx dt+\int_0^T\Big(\int_\Omega(u-V)U^\prime(x_1+m(t)) dx\Big)^2 dt  \le\frac{1}{2}\int_\Omega (u_0-U)^2dx.
\]
\end{lemma}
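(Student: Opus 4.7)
The plan is to start from the energy identity \eqref{ge-uV-1} and exploit the structural cancellations forced by the choice $\varphi\equiv 1$. With $\varphi\equiv 1$ one has $w=A(u|V)/(u-V)$, so
\[
A(u|V)-(u-V)w\equiv 0,
\]
and the first right-hand side term of \eqref{ge-uV-1} vanishes identically. The identity reduces to
\[
\frac{1}{2}\frac{d}{dt}\int_{\Omega}|u-V|^2\,dx+\int_{\Omega}|\nabla(u-V)|^2\,dx=\int_{\Omega}(u-V)U'(Y+x_1)\Big[(w_1-h_M)(1-\psi_M)-g(t)\Big]\,dx,
\]
so only the $G$-forcing has to be analyzed. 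It splits naturally into a ``good'' $g(t)$-piece and a ``cutoff'' remainder, which I would treat separately.

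For the $g(t)$-piece, I would add and subtract $U'(x_1+m(t))$ to write
\[
-g(t)\int_{\Omega}(u-V)U'(Y+x_1)\,dx=-g(t)^2-g(t)\int_{\Omega}(u-V)\big[U'(Y+x_1)-U'(x_1+m(t))\big]\,dx.
\]
After time integration, the first summand yields exactly the dissipation $-\int_0^T\big(\int_{\Omega}(u-V)U'(x_1+m(t))\,dx\big)^2\,dt$ appearing on the left of the claim. The mismatch integral is handled via the Taylor expansion $U'(Y+x_1)-U'(x_1+m(t))=(Y-m(t))\int_0^1 U''(x_1+m(t)+\theta(Y-m(t)))\,d\theta$, Cauchy--Schwarz, and the weighted control on $Y-m(t)$ furnished by Lemma \ref{lem:fund}; note that the definition \eqref{mt} of $m(t)$ guarantees the orthogonality $\int_\Omega|U'(x_1+m(t))|(Y-m(t))\,dx=0$ required by that lemma. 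Under \eqref{DY-infty}, the resulting error is $O(\eps_0)\big(\|\nabla(u-V)\|_{L^2}^2+g(t)^2\big)$, which is absorbable.

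For the cutoff remainder, the convexity of $A_1$ yields the pointwise bound $|w_1|=|A_1(u|V)|/|u-V|\le C|u-V|$, from which $|h_M|$ is in turn controlled by an $L^2$-type average of $|u-V|$ over the band $|x_1+m(t)|\le M+1$. Since $1-\psi_M$ is supported in $\{|x_1+m(t)|\ge M\}$ where $|U'(\cdot+m(t))|$ decays exponentially by \eqref{U-prop}, and the smallness \eqref{DY-infty} of $\nabla Y$ prevents $Y+x_1$ from drifting far from $x_1+m(t)$, the weight $U'(Y+x_1)(1-\psi_M)$ is uniformly of order $e^{-cM}$. Applying the weighted Poincar\'e inequality of Lemma \ref{lem:poincare} with $\phi_1=|U'(\cdot+m(t))|(1-\psi_M)$ and $\phi_2=|U'(\cdot+m(t))|$ then bounds this contribution by $C(e^{-cM}+\eps_0)\big(\|\nabla(u-V)\|_{L^2}^2+g(t)^2\big)$, absorbed into the left-hand side for $M$ large and $\eps_0$ small.

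The main obstacle will be the simultaneous treatment of the mismatch $U'(Y+x_1)-U'(x_1+m(t))$ and the cutoff correction $h_M$: both implicitly involve the deviation $Y-m(t)$, which is only controlled in the weighted sense provided by Lemma \ref{lem:fund} thanks to the normalization \eqref{mt}. Pointwise bounds such as $\|Y-m(t)\|_{L^\infty}$ are not available, so the whole argument must be phrased in terms of weighted $L^2$ norms with weight $|U'(\cdot+m(t))|$. Once these error terms are absorbed into the parabolic dissipation and into a small fraction of $g(t)^2$, integrating in time on $[0,T]$ produces the claimed contraction inequality.
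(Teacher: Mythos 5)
Your overall decomposition is correct and matches the paper's: $\varphi\equiv 1$ forces $A(u|V)-(u-V)w\equiv 0$, the $g(t)$-piece produces the extra dissipation $-\big(\int_\Omega(u-V)U'(x_1+m(t))\,dx\big)^2$ plus a mismatch term, and the cutoff remainder is small after the weighted Poincar\'e inequality of Lemma \ref{lem:poincare} with $M$ large. But there is a genuine gap in how you control the deviation $\widetilde Y:=Y-m(t)$, and it propagates through both the mismatch estimate and the cutoff estimate.

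You assert that pointwise control of $\widetilde Y$ is not available and that one must rely on the weighted inequality of Lemma \ref{lem:fund}. That lemma, combined with the hypothesis $\|\nabla Y\|_{L^\infty}<\eps_0$, only yields
\[
|U'(x_1+m(t))|\,\widetilde Y^2\le C\eps_0^2\big(|x_1+m(t)|+1\big),
\qquad\text{i.e.}\qquad
|\widetilde Y|\le C\eps_0\sqrt{\frac{|x_1+m(t)|+1}{|U'(x_1+m(t))|}},
\]
which grows \emph{exponentially} in $|x_1+m(t)|$ because of \eqref{U-prop}. With such a bound you cannot establish anything like \eqref{ine-1}: for large $|x_1+m(t)|$ the quantity $|\theta\widetilde Y+x_1+m(t)|$ is not bounded from below, so $|U''(\theta\widetilde Y+x_1+m(t))|$ cannot be compared to a power of $|U'(x_1+m(t))|$, and the Cauchy--Schwarz second factor $\int \widetilde Y^2\big(\int_0^1 U''\big)^2/|U'(x_1+m(t))|\,dx$ is not finite. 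The same failure hits the cutoff terms $J_2,J_3$: without a lower bound on $|\theta\widetilde Y+x_1+m(t)|$, the factor $|U'(Y+x_1)|(1-\psi_M)$ is not uniformly $O(e^{-cM})$, so the Poincar\'e weight you propose (and even the paper's $|U'|^{2/3}(1-\psi_M)$) is not controlled.

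The missing ingredient, which is exactly what the paper uses in \eqref{Y-est-0}, is the much stronger \emph{direct pointwise} estimate that follows from $\|\nabla Y\|_{L^\infty}<\eps_0$ together with the normalization \eqref{mt}:
\[
|\widetilde Y(t,x)|
=\Big|\frac{\int_\Omega|U'(y_1+m(t))|\big(Y(t,x)-Y(t,y)\big)\,dy}{\int_\Omega|U'(y_1+m(t))|\,dy}\Big|
\le C\|\nabla Y\|_{L^\infty}\big(|x_1+m(t)|+1\big)\le C\eps_0\big(|x_1+m(t)|+1\big),
\]
a linear-growth pointwise bound. Taking $C\eps_0<\tfrac{1}{3}$ then gives $|\theta\widetilde Y+x_1+m(t)|\ge\tfrac{2}{3}|x_1+m(t)|-C$, which is what produces \eqref{ine-1} and hence makes all of $L$, $J_2$, $J_3$ absorbable. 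So the argument can be completed, but only by replacing your appeal to Lemma \ref{lem:fund} with this direct $L^\infty$ estimate; Lemma \ref{lem:fund} is the right tool later (in Lemma \ref{lem:Y}, where a weighted dissipation $\int|U'||\nabla Y|^2$ is available as an output), not here. As a secondary point, even after fixing this, your proposed Poincar\'e weight $\phi_1=|U'(\cdot+m(t))|(1-\psi_M)$ should be $\phi_1=|U'(\cdot+m(t))|^{2/3}(1-\psi_M)$, since \eqref{ine-1} only controls $|U'(Y+x_1)|$ by $|U'(x_1+m(t))|^{2/3}$, not by $|U'(x_1+m(t))|$.
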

\begin{proof}
First of all, since $w=\frac{A(u|V)}{u-V}$, it follows from \eqref{ge-uV-1} that
\begin{align*}
\begin{aligned} 
&\frac{1}{2}\frac{d}{dt}\int_{\Omega} |u-V|^2 dx + \int_{\Omega} |\nabla(u-V)|^2 dx \\
&\quad =  \int_{\Omega} (u-V)U'(Y+x_1)\Big(w_{1}(1-\psi_M (x_1+m(t))) - h_M(t)(1-\psi_M (x_1+m(t))) -g(t)  \Big)dx.
\end{aligned}
\end{align*}
Then, we derive the other dissipation term $\Big(\int_\Omega(u-V)U^\prime(x_1+m(t)) dx\Big)^2$ from the above last term related to $g(t)$ as follows:
\begin{equation}\label{u2}
\begin{array}{ll}
\di  \frac 12\frac{d}{dt}\int_\Omega |u-V|^2dx+\int_\Omega |\nabla(u-V)|^2 dx+\Big(\int_\Omega(u-V)U^\prime(x_1+m(t)) dx\Big)^2\\[3mm]
\di =-\int_\Omega (u-V)U^\prime(x_1+m(t))dx \int_\Omega (u-V)\big(U^\prime(Y+x_1)-U^\prime(x_1+m(t))\big)dx\\[3mm]
\di \quad +\int_\Omega (u-V) U^\prime( Y+x_1)w_1(1-\psi_M (x_1+m(t)))dx\\[3mm]
\di \quad +\frac{1}{2(M+1)}\int_{|x_1+m(t)|\leq M+1} w_1 dx \int_\Omega(u-V) U^\prime( Y+x_1)(1-\psi_M (x_1+m(t))) dx\\
\di :=J_1+J_2+J_3.
\end{array}
\end{equation}
In the sequel, we often use the notation $\widetilde Y$ to denote $\widetilde Y:= Y-m(t)$.\\
We first estimate $J_1$ as
\begin{align*}
\begin{aligned}
|J_1| &\leq \frac{1}{2} \Big(\int_\Omega(u-V)U^\prime(x_1+m(t)) dx\Big)^2+ \frac{1}{2} \Big(\int_\Omega (u-V)\big(U^\prime(Y+x_1)-U^\prime(x_1+m(t))\big)dx\Big)^2\\[3mm]
& \le  \frac{1}{2} \Big(\int_\Omega(u-V)U^\prime(x_1+m(t)) dx\Big)^2+ \frac{1}{2} \underbrace{\Big(\int_\Omega |u-V|\int_0^1|U^{\prime\prime}(\theta  \widetilde Y+x_1+m(t))|d\theta |\widetilde Y|dx\Big)^2}_{L}. 
\end{aligned}
\end{align*}
To control the second term $L$ above, we use the following estimates
\begin{align}
\begin{aligned}\label{Y-est-0}
|\widetilde Y(t,x)|&\le \Big|Y(t,x)-\frac{\int_\Omega |U'(y_1+m(t))|Ydy}{\int_\Omega |U'(y_1+m(t))|dy} \Big|\\
&\le C \int_{\Omega} |U^\prime(y_1+m(t))||Y(t,x)- Y(t,y)| dy \\
&\le C\|\nabla Y\|_{L^{\infty}} \int_{\Omega} |U^\prime(y_1+m(t))|(|x_1+m(t)|+|y_1+m(t)|+C) dy \\
&\le C\eps_0 (|x_1+m(t)|+1),
\end{aligned}
\end{align}
where we have used the assumption $\|\nabla Y\|_{L^{\infty}((0,T)\times\Omega)}<\eps_0$.\\
Taking $\eps_0$ sufficiently small such that $C\eps_0<\frac{1}{3}$, we have that for all $\theta\in[0,1]$,
\[
|\theta \widetilde Y+x_1+m(t)|\ge |x_1+m(t)|-| \widetilde Y|\ge \frac{2|x_1+m(t)|}{3}-C,
\]
which together with \eqref{shock} and \eqref{U-prop} implies that
\beq\label{ine-1}
|U^{\prime\prime}(\theta \widetilde Y+x_1+m(t))|^{\f32}\le C|U'(\theta \widetilde Y+x_1+m(t))|^{\f32}\le  C|U'(x_1+m(t))|.
\eeq
Therefore, we have
\begin{align*}
\begin{aligned}
L&\le \eps_0^2\int_{\Omega} |u-V|^2|U^{\prime\prime}(\theta \widetilde Y+x_1+m(t))|^{\f32}dx \int_{\Omega} |U^{\prime\prime}(\theta \tilde Y+x_1+m(t))|^{\f12}(|x_1+m(t)|+C)^2dx\\
&\le C\eps_0^2\int_{\Omega} |u-V|^2|U'(x_1+m(t))|dx.
\end{aligned}
\end{align*}
We now use Lemma \ref{lem:poincare} with taking $\phi_1=|U'|$ and $\phi_2=U'$, to get
\begin{align*}
\begin{aligned}
L & \le  C\eps_0^2\Big(\int_\Omega(u-V)U^\prime(x_1+m(t)) dx\Big)^2+C\eps_0^2\|\nabla(u-V)\|_{L^2(\Omega)}^2. 
\end{aligned}
\end{align*}
For the term $J_2$, since
\[
J_2=\int_\Omega (u-V) U^\prime( \widetilde Y+m(t)+x_1)w_1(1-\psi_M (x_1+m(t)))dx
\]
we use the same estimates as the term $L$ to get
\begin{align*}
\begin{aligned}
|J_2| & \le  C\int_\Omega |u-V|^2 |U^\prime(x_1+m(t))|^{2/3}(1-\psi_M (x_1+m(t)))dx, 
\end{aligned}
\end{align*}
where we have used $|w|\le C|u-V|$. Then, using Lemma \ref{lem:poincare} with taking $\phi_1=|U'|^{2/3}(1-\psi_M)$ and $\phi_2=U'$, and taking $M$ to be suffciently large, we have
\begin{align*}
\begin{aligned}
|J_2| & \le  \frac{1}{4}\Big(\int_\Omega(u-V)U^\prime(x_1+m(t)) dx\Big)^2+\frac{1}{4}\|\nabla(u-V)\|_{L^2((0,T)\times\Omega)}^2. 
\end{aligned}
\end{align*}
Likewise, since
\[
|J_3|\leq\frac{C}{2(M+1)} \underbrace{\int_{|x_1+m(t)|\leq M+1} |u-V| dx}_{J_{31}} \underbrace{\int_\Omega |u-V| |U^\prime(Y+x_1)|(1-\psi_M (x_1+m(t)))dx,}_{J_{32}}
\]
Holder inequality and \eqref{U-prop} yield that
\begin{align}
\begin{aligned}\label{J31}
|J_{31}|&\leq \sqrt{2(M+1)}\Big(\int_{|x_1+m(t)|\leq M+1} |u-V|^2dx\Big)^{\f12}\\
&\leq C\sqrt{M+1} e^{\frac{c_{\pm}}{2}(M+1)} \Big(\int_{|x_1+m(t)|\leq M+1} |u-V|^2|U^\prime(x_1+m(t))| dx\Big)^{\f12}\\
&\leq C\sqrt{M+1} e^{\frac{c_{\pm}}{2}(M+1)} \Big(\int_\Omega |u-V|^2|U^\prime(x_1+m(t))| dx\Big)^{\f12},
\end{aligned}
\end{align}
and \eqref{ine-1} yields that
\begin{align}
\begin{aligned}\label{J32}
|J_{32}|&\le \Big( \int_\Omega|u-V|^2 
|U^\prime(\widetilde Y+x_1+m(t))|^{\f12}(1-\psi_M (x_1+m(t)))dx\Big)^{\f12}\\
&\quad \times \Big( \int_\Omega  |U^\prime(\widetilde Y+x_1+m(t))|^{\f32}(1-\psi_M (x_1+m(t)))dx\Big)^{\f12}\\
&\le\Big( \int_\Omega|u-V|^2 |U^\prime(x_1+m(t))|^{\f13}(1-\psi_M (x_1+m(t)))dx\Big)^{\f12}\\
&\quad\times\Big(\int_\Omega  |U^\prime(x_1+m(t))|(1-\psi_M (x_1+m(t)))dx\Big)^{\f12}\\
&\le C e^{-\frac{c_{\pm}}{2}M}\Big( \int_\Omega|u-V|^2 |U^\prime(x_1+m(t))|^{\f13}(1-\psi_M (x_1+m(t)))dx\Big)^{\f12}.
\end{aligned}
\end{align}
Then we apply Lemma \ref{lem:poincare} with $\phi_1=|U^\prime|$, $\phi_2=U^\prime$ to \eqref{J31}, and $\phi_1=|U^\prime|^{1/3}(1-\psi_M)$, $\phi_2=U^\prime$ to \eqref{J32} so that
\begin{align*}
\begin{aligned}
|J_3|\leq\frac{C}{\sqrt{M+1}}\Big(\int_\Omega(u-V)U^\prime(x_1+m(t)) dx\Big)^2+ \frac{C}{\sqrt{M+1}}\|\nabla(u-V)\|_{L^2(\Omega)}^2.
\end{aligned}
\end{align*}
Therefore, combining all estimates above together with taking small $\eps_0$ and large $M$, we have
\[
\frac{d}{dt}\int_\Omega (u-V)^2dx+\int_\Omega |\nabla(u-V)|^2 dx+\Big(\int_\Omega(u-V)U^\prime(x_1) dx\Big)^2\le 0,
\]
which completes the proof.
\end{proof}

The following Lemma provides a $L^2$-contraction of $u-V$ when the shift $Y$ is a solution of \eqref{Y-eq}.
\begin{lemma}\label{lem:cont-2}
For any fixed $t_0\in (0,T)$, let $Y$ be a solution of \eqref{Y-eq}. Assume there exists $\eps_0>0$ small enough such that
\beq\label{Y-aa}
\|\nabla Y\|_{L^{\infty}((0,T)\times\Omega)}<\eps_0.
\eeq
Then, for all $t\le t_0$, there exists a constant $C_0$ depending on $t_0$ such that 
\[
\int_{\Omega} |u(t,x)-V(t,x)|^2 dx \le C_0\int_{\Omega} |u_0(x)-U(x_1)|^2 dx,
\]
and for all $t\ge t_0$,
\beq\label{cont-t0}
\frac{d}{dt}\int_\Omega (u-V)^2dx+\int_\Omega |\nabla(u-V)|^2 dx+\Big(\int_\Omega(u-V)U^\prime(x_1) dx\Big)^2\le 0,
\eeq
\end{lemma}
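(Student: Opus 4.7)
The natural splitting is at $t = t_0$: for $t \ge t_0$ the cutoff $\varphi(t)$ equals one, so \eqref{Y-eq} coincides exactly with the equation analyzed in Lemma \ref{lem:cont}; for $t < t_0$ the cutoff can be strictly less than one, so the identity $A(u|V) - (u-V) w = (1-\varphi(t)) A(u|V)$ obstructs the full contraction. The strategy is therefore to invoke Lemma \ref{lem:cont} on $[t_0, T]$ and run a Gronwall argument on $[0, t_0]$.

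\emph{Step 1 (contraction for $t \ge t_0$).} When $\varphi(t) \equiv 1$, the definition \eqref{w} of $w$ forces $A(u|V) - (u-V) w = 0$, and $h_M(t)$ reduces to the genuine slab average of $w_1$. Since the a priori bound \eqref{Y-aa} is nothing but \eqref{DY-infty}, the energy identity \eqref{ge-uV-1} and the splitting into $J_1, J_2, J_3$ of Lemma \ref{lem:cont} transfer verbatim, and they rely only on \eqref{Y-aa}, the weighted Poincar\'e inequality of Lemma \ref{lem:poincare}, and the pointwise bounds \eqref{Y-est-0}--\eqref{ine-1}. This yields \eqref{cont-t0} on $[t_0, T]$.

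\emph{Step 2 ($L^2$ bound for $0 \le t \le t_0$).} Starting from \eqref{ge-uV-1}, the extra flux defect contributes
\[ -(1-\varphi(t)) \int_\Omega A(u|V) \cdot \nabla V\, dx, \]
which I bound by $C \|u-V\|_{L^2}^2$ using the Taylor estimate $|A(u|V)| \le C(u-V)^2$ (valid since $u \in L^\infty$ and $V$ is pinned between the shock end states) together with $|\nabla V| \le |U'(Y+x_1)|(1+|\nabla Y|) \le C$ under \eqref{Y-aa}. The remaining terms on the right-hand side are exactly those of Lemma \ref{lem:cont}, multiplied by $\varphi$ (for the $w$ and $h_M$ contributions) or independent of $\varphi$ (for the $g(t)$ contribution). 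I reuse the estimates for $J_1, J_2, J_3$ there, but instead of balancing them against the contraction dissipation I absorb them into $C\|u-V\|_{L^2}^2 + \tfrac{1}{2}\|\nabla(u-V)\|_{L^2}^2$ via Young's inequality. The $g(t)$-term is handled by Cauchy--Schwarz and $\|U'\|_{L^2(\bbr)} < \infty$, giving again $C\|u-V\|_{L^2}^2$. Collecting,
\[ \frac{d}{dt}\int_\Omega (u-V)^2\, dx + \tfrac{1}{2}\int_\Omega |\nabla(u-V)|^2\, dx \le C \int_\Omega (u-V)^2\, dx, \]
so Gronwall on $[0,t_0]$ gives $\|u(t)-V(t)\|_{L^2}^2 \le e^{C t_0} \|u_0-U\|_{L^2}^2$, whence $C_0 := e^{C t_0}$.

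\emph{Main obstacle.} The subtlety is checking that the pointwise estimate \eqref{Y-est-0} on $\widetilde Y = Y - m(t)$ and the exponentially-shift-friendly inequality \eqref{ine-1} remain valid on the whole interval $[0, T]$, not only on $[t_0, T]$ where $\varphi = 1$. Both rest only on the smallness \eqref{Y-aa} of $\|\nabla Y\|_{L^\infty}$ and on the zero $|U'(\cdot+m(t))|$-weighted average of $\widetilde Y$, the latter being automatic from the definition \eqref{mt} of $m(t)$ and insensitive to $\varphi$. Hence the structural ingredients carry over to the cutoff regime without any loss, and the only price paid for $t < t_0$ is the replacement of contraction by the Gronwall-type bound above.
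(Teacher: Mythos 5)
Your proposal is correct and follows essentially the same route as the paper: for $t\ge t_0$ one has $\varphi\equiv 1$ so the differential inequality from Lemma \ref{lem:cont} transfers verbatim, and for $t< t_0$ one bounds all right-hand-side terms of \eqref{ge-uV-1} crudely by $C\|u-V\|_{L^2}^2$ (the paper does not even need to retain part of the dissipation, simply observing $|A(u|V)-(u-V)w|\le C|u-V|^2$, $|\nabla V|\le C$, $|w|\le C|u-V|$, and the Cauchy--Schwarz bounds on $h_M$ and $g$), then applies Gronwall on $[0,t_0]$. Your closing remark that the structural ingredients \eqref{Y-est-0}--\eqref{ine-1} rest only on the smallness of $\nabla Y$ and the definition \eqref{mt} of $m(t)$, hence survive the cutoff regime, is a correct observation that the paper implicitly relies on.
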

\begin{proof}
First of all, since $\varphi(t)=1$ for all $t\ge t_0$, we have the same estimates as in Lemma \ref{lem:cont}, and thus complete \eqref{cont-t0}. On the other hand, since $\varphi(t)<1$ for all $t< t_0$, we start with \eqref{ineq-0}: 
\begin{equation}\label{u2}
\begin{array}{ll}
\di \frac{d}{dt}\int_\Omega \frac 12 (u-V)^2dx+\int_\Omega |\nabla(u-V)|^2 dx\\[3mm]
\di =-\int_{\Omega}\Big(A(u|V) - (u-V) w\Big)\cdot \nabla V dx+\int_\Omega (u-V) U^\prime( Y+x_1)w_1(1-\psi_M (x_1+m(t)))dx\\[3mm]
\di \quad - \int_\Omega(u-V) U^\prime( Y+x_1)\Big(h_M(t)(1-\psi_M (x_1+m(t))) +g(t)\Big) dx
\di :=I_1+I_2+I_3.
\end{array}
\end{equation}
Since $A(u|V)\le C|u-V|^2$, and thus $|w|\le C|u-V|$, the first term $I_1$ can be estimated as
\begin{align*}
\begin{aligned}
|I_1| \leq \int_{\Omega} |u-V|^2 |U^\prime( Y+x_1)|(|\nabla Y|+1) dx \le C \int_{\Omega} |u-V|^2 dx,
\end{aligned}
\end{align*}
and the second term $I_2$ can be estimated as
\begin{align*}
\begin{aligned}
|I_2| \leq C\int_{\Omega} |u-V|^2 |U^\prime( Y+x_1)| dx \le C \int_{\Omega} |u-V|^2 dx. 
\end{aligned}
\end{align*}
Since $|h_M|\le \frac{C}{\sqrt{M+1}} \|u-V\|_{L^2(\Omega)}$ and $|g|\le C\|u-V\|_{L^2(\Omega)}$, moreover \eqref{ine-1} yields
\[
\Big|\int_\Omega(u-V) U^\prime( Y+x_1) dx\Big| \le  C\|u-V\|_{L^2(\Omega)},
\]
we have
\begin{align*}
\begin{aligned}
|I_3| \leq  C \int_{\Omega} |u-V|^2 dx. 
\end{aligned}
\end{align*}
Therefore, we can use the Gronwall inequality for $t\le t_0$, which completes the proof.
\end{proof}

\subsection{Local existence and a prior estimates on $Y$}
In order to complete a global-in-time $L^2$-contraction from Lemma \ref{lem:cont}, we should estimate the assumptions \eqref{DY-infty} and \eqref{Y-aa} on $\nabla Y$. Therefore, we will prove a global-in-time existence on the shift $Y$ in suitable spaces, for which $\nabla Y$ is uniformly small in $(t,x)\in (0,\infty)\times\Omega$. For that, we first present a local-in-time existence as follows. We present its proof in Appendix.
\begin{proposition}(Local existence)\label{prop:local} 
If $u_0\in L^{\infty}(\Omega)$, then for any $R>0$, there exists $T_0\in (0,\frac{t_0}{2}]$ such that \eqref{Y-eq} has a solution $Y$ satisfying
\beq\label{Y-local-sol}
\|\sqrt{|U'(\cdot + m(t))|} Y\|_{L^{\infty}(0,T_0; L^2(\Omega))}+ \|\nabla Y\|_{L^{\infty}(0,T_0; H^s(\Omega))}+ \|\Delta Y\|_{L^2(0,T_0; H^{s}(\Omega))}\le R,
\eeq
where $s>\frac{N}{2}$.\\
In particular, if $\nabla u_0\in H^{s-1}(\Omega)$ and $u_0\in L^{\infty}(\Omega)$, there exists $T_0>0$ such that \eqref{Y-eq} with $\varphi\equiv1$ has a solution $Y$ satisfying \eqref{Y-local-sol}.
\end{proposition}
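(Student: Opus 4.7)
The plan is to construct $Y$ by a Banach fixed-point argument in the space
$$X_{T_0,R} := \Big\{Y\ :\ \|\sqrt{|U'(\cdot+m(t))|}\,Y\|_{L^\infty(0,T_0;L^2)} + \|\nabla Y\|_{L^\infty(0,T_0;H^s)} + \|\Delta Y\|_{L^2(0,T_0;H^s)} \le R\Big\},$$
with $s>N/2$, viewing \eqref{Y-eq} as a semilinear heat equation with principal part $\partial_t-\Delta$ and treating every other term as a lower-order perturbation during the iteration. On $[0,t_0/2]$ the cut-off $\varphi$ vanishes, so $w\equiv 0$, $h_M\equiv 0$, and the right-hand side reduces to $-g(t)$, which is uniformly bounded since the maximum principle for \eqref{eq} ensures $u\in L^\infty$ whenever $u_0\in L^\infty$. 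This reduction is what allows the construction to get started without any smoothness assumption on $u_0$ beyond $L^\infty$.

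First I would freeze a reference $\bar Y \in X_{T_0,R}$ and define $\mathcal T(\bar Y) = Y$ as the solution of the linear parabolic problem obtained by substituting $\bar Y$ into every coefficient, into the quadratic term $A_1'(U(\bar Y+x_1))|\nabla\bar Y|^2$, and into $g$, while $m(t)$ is read off from $\bar Y$ via the ODE \eqref{mt2} (which is solvable because $\|\nabla\bar Y\|_{L^\infty}\le CR$ is small once $R$ is fixed, via Sobolev embedding $H^s\hookrightarrow L^\infty$). Classical $L^2$ parabolic theory yields existence for this linear equation, and the required bounds come from three layers of energy estimates: multiplying by $|U'(x_1+m(t))|Y$ and invoking Lemma \ref{lem:fund} for the weighted $L^2$ piece; applying $\partial_x^\alpha$ with $|\alpha|\le s$ and testing with $\partial_x^\alpha Y$ for the $L^\infty_t H^s$ piece, where the quadratic term is tamed by the Moser-type bound $\|A_1'(U(\bar Y+x_1))|\nabla\bar Y|^2\|_{H^s} \le C(R)\|\nabla\bar Y\|_{L^\infty}\|\nabla\bar Y\|_{H^s}\le C(R)R^2$; and using $\partial_t Y-\Delta Y = \cdots$ to trade the parabolic dissipation for the $L^2_t H^s$ bound on $\Delta Y$. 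Picking $T_0$ small enough forces $\mathcal T(X_{T_0,R})\subset X_{T_0,R}$.

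For contractivity I would subtract the equations for $\mathcal T(\bar Y_1)$ and $\mathcal T(\bar Y_2)$ and run the same energy scheme on the difference $Y_1-Y_2$. Each coefficient difference $A_i'(U(\bar Y_1+x_1)) - A_i'(U(\bar Y_2+x_1))$ is Lipschitz in $\bar Y_1-\bar Y_2$ with constant depending only on $R$; the quadratic difference $A_1'(U(\bar Y_1+x_1))|\nabla\bar Y_1|^2 - A_1'(U(\bar Y_2+x_1))|\nabla\bar Y_2|^2$ splits into two pieces each estimated by $C(R)\|\bar Y_1-\bar Y_2\|_{X_{T_0,R}}$; and the difference $m_1(t)-m_2(t)$ is controlled by integrating \eqref{mt2}, whose right-hand side is Lipschitz in $(Y,m)$ on the region where $\|\nabla Y\|_{L^\infty}\le 1/2$. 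A factor of $T_0$ or $\sqrt{T_0}$ enters every difference bound through the time integration, so shrinking $T_0$ further makes $\mathcal T$ a strict contraction and Banach's theorem gives the unique fixed point $Y\in X_{T_0,R}$.

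The main obstacle I anticipate is closing the $H^s$-in-space estimate in the presence of both the quadratic gradient term and the moving weight $|U'(\cdot+m(t))|$: commutators from $\partial_x^\alpha$ hitting the coefficients $A_i'(U(Y+x_1))$ must be absorbed into the parabolic dissipation, not into the $L^\infty_t H^s$ norm of $Y$ (otherwise the iteration would not close for arbitrary $R>0$). For the ``in particular'' statement with $\varphi\equiv 1$, the additional forcing terms $-w\cdot\nabla Y$, $-(w_1-h_M)\psi_M$, $-h_M$ slot into the same scheme provided $w = A(u|V)/(u-V)$ sits in $L^\infty_t H^s$ from $t=0$; since $w$ is a smooth function of $(u,V)$, this follows once the extra hypothesis $\nabla u_0\in H^{s-1}$ is propagated to $u-V\in L^\infty(0,T_0;H^s)$ by standard short-time parabolic regularity for \eqref{eq}, which is exactly why the stronger initial assumption is needed for this variant.
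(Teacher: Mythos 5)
Your proposal follows essentially the same route as the paper's Appendix: linearize \eqref{Y-eq} by freezing the previous iterate in every nonlinear term (including $m$, read off from the frozen iterate via \eqref{mt2}), solve the resulting linear parabolic problem, close via the weighted $L^2$ estimate (multiplying by $|U'(x_1+m(t))|Y$) and the $\nabla^{k+1}Y$ energy estimate with Moser/Sobolev bounds, and shrink $T_0$. The paper sets this up as an explicit Picard iteration $Y_0\equiv 0$, $Y_{n}\mapsto Y_{n+1}$ with the uniform bound \eqref{uniform}, and then obtains convergence by showing the sequence is Cauchy in the lower-order space $L^\infty(0,T_0;L^2)\cap L^2(0,T_0;H^1)$; your Banach-fixed-point framing is the same argument in different clothes.

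Two caveats are worth flagging. First, your parenthetical claim that ``$\|\nabla\bar Y\|_{L^\infty}\le CR$ is small once $R$ is fixed'' is not correct for large $R$: the solvability of the ODE \eqref{mt2} genuinely needs $1+\partial_{x_1}Y$ bounded away from zero, hence $\|\nabla Y\|_{L^\infty}$ small, and $R$ is allowed to be arbitrary. What actually rescues you is $Y|_{t=0}=0$: the energy estimate produces the sharper bound $\|\nabla Y\|_{L^\infty(0,T_0;H^s)}\le\sqrt{C_R T_0 e^{CT_0}}$, which shrinks with $T_0$ independently of $R$, so the iterates stay in the small-gradient region automatically. To make your fixed-point version airtight, either build $\|\nabla Y\|_{L^\infty}\le 1/2$ into the definition of $X_{T_0,R}$ or record that $\mathcal T$ actually maps into a much smaller ball. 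Second, a strict contraction directly in the full $X_{T_0,R}$ norm is delicate because the coefficient difference $A_i'(U(\bar Y_1+x_1))-A_i'(U(\bar Y_2+x_1))$ paired with the top derivative of the solution leads to the usual derivative-count bookkeeping; the paper sidesteps this by proving uniform bounds in the strong norm and Cauchy convergence in the weaker $L^\infty L^2\cap L^2 H^1$ topology, which is the cleaner (and standard) way to finish. Neither caveat is conceptual; your estimates coincide with the paper's.
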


In order to prove the global existence on the shift $Y$, we use the continuation argument. For that, we present the following a priori estimates.

\begin{proposition}(A priori estimates)\label{prop:priori} 
Let $Y$ be a solution of \eqref{Y-eq} with $\varphi\equiv1$ for all $t>0$. Assume that there exists $\eps_0>0$ small enough such that
\begin{subequations}
\begin{align}
&\|\nabla Y\|_{L^{\infty}(0,T; L^2(\Omega))}+ \|\Delta Y\|_{L^2((0,T)\times\Omega)}\le\eps_0, \label{ass-2}\\
& \|\nabla Y\|_{L^{\infty}(0,T; H^{s}_{loc}(\Omega))}\le\eps_0, \label{ass-3}\\
&\|u_0-U\|_{H^s(\Omega)}\le\eps_0^{3/2}\label{ass-4},\quad s>\frac{N}{2}.
\end{align}
\end{subequations}
Then, there exists $C>0$ depending only on $s, N$ such that 
\begin{subequations}
\begin{align}
&\|\sqrt{|U'(\cdot+m(t))|} (Y-m(t))\|_{L^{\infty}(0,T; L^2(\Omega))}+\|\nabla Y\|_{L^{\infty}(0,T; L^2(\Omega))}+ \|\Delta Y\|_{L^2((0,T)\times\Omega)} \le C\eps_0^{3/2},\label{lower-Y-1} \\
&\|\nabla  Y\|_{L^{\infty}(0,\infty; H^s_{loc}(\Omega))}+ \|\Delta  Y\|_{L^2(0,\infty; H^{s}_{loc}(\Omega))}\le C\eps_0^{3/2}. \label{higher-Y-1}
\end{align}
\end{subequations}
\end{proposition}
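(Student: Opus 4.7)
The plan is a bootstrap: use the $L^2$-contraction already established in Lemma \ref{lem:cont} to extract a forcing of size $\eps_0^{3/2}$ for the parabolic equation \eqref{Y-eq}, and then run parabolic energy estimates at two successive regularity levels (weighted $L^2$, then unweighted $H^1\cap L^2H^2$, and finally $H^s_{loc}$).

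First I would verify that the a priori hypothesis \eqref{DY-infty} of Lemma \ref{lem:cont} holds under \eqref{ass-2}--\eqref{ass-3}: since $s>N/2$, Sobolev embedding gives $H^s_{loc}\hookrightarrow L^\infty_{loc}$, and combined with $\nabla Y\in L^\infty_t L^2_x$ and $\Delta Y\in L^2_{t,x}$ one obtains $\|\nabla Y\|_{L^\infty((0,T)\times\Omega)}\lesssim\eps_0$. Invoking Lemma \ref{lem:cont} then gives
$$
\|u-V\|_{L^\infty(0,T;L^2(\Omega))}\le\|u_0-U\|_{L^2(\Omega)}\le\eps_0^{3/2},
$$
which by $|w|\le C|u-V|$ and the definitions of $h_M,\,g$ bounds the whole forcing $F:=-(w_1-h_M)\psi_M(x_1+m(t))-h_M-g$ on the right-hand side of \eqref{Y-eq} by $C\eps_0^{3/2}$ in $L^\infty_tL^2_x$. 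This is the mechanism that generates the $\eps_0^{3/2}$ gain.

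\textbf{Step 1 (weighted $L^2$).} Setting $\widetilde Y=Y-m(t)$ and mimicking the calculation leading to \eqref{Y-ex}, I would multiply the equation for $\widetilde Y$ by $|U'(x_1+m(t))|\widetilde Y$, integrate by parts, and use the identity \eqref{U-prime} to cancel the linear first-order flux terms, exactly as in Section \ref{sec:special}. The new feature is the forcing $F$, giving a term $\int_\Omega |U'(x_1+m(t))|\widetilde Y\,F\,dx$. Using Cauchy--Schwarz, Lemma \ref{lem:fund}, and the estimate on $F$ above, together with absorbing the nonlinear commutator terms by the smallness of $\|\nabla Y\|_{L^\infty}$ just as in \eqref{Y-ex}, Grönwall yields $\|\sqrt{|U'|}\widetilde Y\|_{L^\infty(0,T;L^2)}\le C\eps_0^{3/2}$.

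\textbf{Step 2 (basic energy estimate).} Multiplying \eqref{Y-eq} by $-\Delta Y$ and integrating produces the dissipation $\|\Delta Y\|_{L^2_{t,x}}^2$ and $\tfrac12\tfrac{d}{dt}\|\nabla Y\|_{L^2}^2$. The transport terms $A_i'\partial_i Y$ integrate against $\Delta Y$ harmlessly, while the two delicate terms are the quasilinear quadratic term $A_1'(U(Y+x_1))|\nabla Y|^2$ and the drift $w\cdot\nabla Y$; both are absorbed into $\|\Delta Y\|_{L^2}^2$ using Gagliardo--Nirenberg together with the bootstrap smallness $\|\nabla Y\|_{L^\infty}\lesssim\eps_0$ and $\|u-V\|_{L^\infty L^2}\le\eps_0^{3/2}$. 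The forcing contributes $\|F\|_{L^2_{t,x}}\le C\eps_0^{3/2}$ after integrating the time bound. Since $Y(0)=0$, this closes the $L^\infty_tH^1\cap L^2_tH^2$ estimate at size $C\eps_0^{3/2}$.

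\textbf{Step 3 ($H^s_{loc}$ estimate).} For a spatial cutoff $\chi$ supported on a large ball, I would apply $D^\alpha$ with $|\alpha|\le s$ to $\chi\,\eqref{Y-eq}$, pair with $-\Delta D^\alpha(\chi Y)$, and track commutators, which are all of lower order and handled by Steps 1--2. The forcing $F$ is bounded in $L^2 H^s_{loc}$ by running a parallel $H^s$ energy estimate for the viscous conservation law $\partial_t(u-V)+\mathrm{div}A(u|V)=\Delta(u-V)+\cdots$ using $\|u_0-U\|_{H^s}\le\eps_0^{3/2}$. Grönwall again delivers the $C\eps_0^{3/2}$ bound of \eqref{higher-Y-1}.

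\textbf{Main obstacle.} The hardest step is Step 2: the quasilinear term $A_1'(U(Y+x_1))|\nabla Y|^2$ and the drift $w\cdot\nabla Y$ (with $w$ only in $L^\infty_tL^2_x$ at the small scale $\eps_0^{3/2}$) must be absorbed in the parabolic dissipation \emph{without losing the gain from $\eps_0$ to $\eps_0^{3/2}$}. This requires carefully balancing the smallness of $\|\nabla Y\|_{L^\infty}$ inherited from \eqref{ass-3} via Sobolev embedding against the $L^2$ dissipation, and crucially exploiting that $\|u-V\|_{L^2}$ is controlled by $\eps_0^{3/2}$ rather than merely $\eps_0$ thanks to Lemma \ref{lem:cont}; otherwise the bootstrap would not close.
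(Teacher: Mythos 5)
Your overall strategy, and in particular the identification of the mechanism generating the extra power of $\eps_0$, is sound: under \eqref{ass-2}--\eqref{ass-3} and Sobolev embedding one gets $\|\nabla Y\|_{L^\infty}\lesssim\eps_0$, so Lemma \ref{lem:cont} applies and gives $\|u-V\|_{L^\infty_t L^2_x}$, $\|\nabla(u-V)\|_{L^2_{t,x}}$, and $\|\int U'(u-V)\|_{L^2_t}$ all of order $\eps_0^{3/2}$; Steps 1 and 2 are then essentially the paper's Lemmas \ref{lem:Y} and \ref{lem:DY} (multiply by $|U'(\cdot+m)|\widetilde Y$ resp.\ by $-\Delta Y$, cancel via \eqref{U-prime}, and absorb the quasilinear and drift terms using Lemma \ref{lem:fund}, Gagliardo--Nirenberg, and the smallness in \eqref{ass-2}--\eqref{ass-3}). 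Two issues, however, one minor and one substantive.

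Minor: you claim the ``forcing'' $F=-(w_1-h_M)\psi_M-h_M-g$ is bounded by $C\eps_0^{3/2}$ in $L^\infty_t L^2_x$. This cannot be right as stated, because $h_M(t)$ and $g(t)$ are constant in $x$ and $\Omega=\bbr\times\bbt^{N-1}$ is unbounded, so $h_M,g\notin L^2(\Omega)$. What actually saves the argument is structural: in the weighted estimate, $\int_\Omega|U'(\cdot+m)|\widetilde Y\,dx=0$ by the definition \eqref{mt} of $m(t)$, so the $(h_M+g+m')$-term integrates to zero; in the $-\Delta Y$ estimate, $\int_\Omega\Delta Y\,dx=0$, so again these terms drop. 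Only the compactly supported piece $(w_1-h_M)\psi_M$ needs an $L^2$ bound, and for that the Poincar\'e argument on the fixed slab $\{|x_1+m|\le M+1\}$ is used. You need to make this cancellation explicit, or the bookkeeping of $F$ is simply wrong.

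Substantive: Step~3 as written does not close. A $D^\alpha$-level energy estimate with a Gr\"onwall argument starting at $t=0$ produces a bound of the form $e^{CT}\cdot\eps_0^3$, which is useless for the claim $\|\nabla Y\|_{L^\infty(0,\infty;H^s_{loc})}+\|\Delta Y\|_{L^2(0,\infty;H^s_{loc})}\le C\eps_0^{3/2}$ that must be uniform on the whole half-line. The paper handles this by splitting into (i) a genuinely local-in-time Gr\"onwall on $[0,t_0]$ using $Y|_{t=0}=0$ and $\|u_0-U\|_{H^s}\le\eps_0^{3/2}$, and (ii) a parabolic-regularization argument on any backward cylinder $Q_r=(-1/r,0)\times\Omega_r$ (after translation in time), with cutoff functions $\phi_r$ compactly supported in \emph{both} time and space, so that the energy identity contains no initial-data term at all; the estimate is then controlled purely by the lower-order a priori bounds of Steps~1--2, which \emph{are} uniform in $t$. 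This is the key idea your Step~3 is missing; a spatial cutoff alone is not sufficient, you need the time cutoff to avoid the Gr\"onwall blow-up. Relatedly, I'd also disagree with your assessment that Step~2 is the main obstacle; once the $L^\infty$ smallness is in hand, Step~2 is a routine absorption argument, and the genuinely delicate part is exactly this uniform-in-time upgrade to $H^s_{loc}$.
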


\begin{proposition}(A priori estimates)\label{prop:priori2} 
For any fixed $t_0>0$, let $Y$ be a solution of \eqref{Y-eq}. Assume that there exists $\eps_0>0$ small enough such that \eqref{ass-2} and \eqref{ass-3} with $s>\frac{N}{2}$, and 
\beq\label{ass-5}
\|u_0-U\|_{L^2(\Omega)}\le\eps_0^{3/2}, \quad u_0\in L^{\infty}(\Omega).
\eeq
Then, there exists $C>0$ depending only on $s, N$ and $t_0$ such that \eqref{lower-Y-1} and \eqref{higher-Y-1}.
\end{proposition}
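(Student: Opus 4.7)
The plan is to bootstrap the a priori estimates by combining the scheme of Proposition \ref{prop:priori} with Lemma \ref{lem:cont-2} and the parabolic smoothing effect of \eqref{eq}. A key structural point is that the initial datum for $Y$ is $Y|_{t=0}=0$, so all weighted $L^2$ and higher-order norms of $Y$ start from zero; the task is therefore to propagate them forward, using the $L^2$ control on $u-V$ as the forcing input. The cutoff $\varphi$, which vanishes on $[0,t_0/2]$ and equals $1$ on $[t_0,\infty)$, naturally splits the time interval into three regimes that I would treat separately.

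For the lower-order bound \eqref{lower-Y-1}, I would multiply \eqref{Y-eq} by $|U'(x_1+m(t))|\widetilde Y$ with $\widetilde Y:=Y-m(t)$ and integrate over $\Omega$, following the pattern used for Proposition \ref{prop:priori}. The transport, quadratic-gradient and convective terms are handled exactly as there, using the smallness hypotheses \eqref{ass-2}-\eqref{ass-3} together with Lemma \ref{lem:fund} to absorb the nonlinear contributions into the parabolic dissipation $\int|U'||\nabla\widetilde Y|^2\,dx$. The source terms involving $w$, $h_M$ and $g(t)$ are all controlled by $\|u-V\|_{L^2(\Omega)}$, which by Lemma \ref{lem:cont-2} satisfies $\|u(t)-V(t)\|_{L^2}^2\le C_0(t_0)\|u_0-U\|_{L^2}^2\le C_0(t_0)\,\eps_0^{3}$ for $t\le t_0$ and is contractive for $t\ge t_0$. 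A parallel argument applied to the energy identity \eqref{ineq-0} (absorbing the bad terms in the same manner as Lemma \ref{lem:cont-2}) yields the integrated dissipation $\int_0^{T}\!\!\int_\Omega|\nabla(u-V)|^2\,dx\,dt+\int_0^{T}g(t)^2\,dt\le C(t_0)\eps_0^{3}$, which feeds back into the $Y$-estimate and closes \eqref{lower-Y-1}. The $\|\nabla Y\|_{L^\infty L^2}+\|\Delta Y\|_{L^2L^2}$ bound is then obtained by differentiating \eqref{Y-eq} and performing a standard $L^2$ energy estimate, again closing by smallness of $\eps_0$.

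For the higher-order bound \eqref{higher-Y-1}, I would exploit parabolic smoothing: since $u_0\in L^\infty(\Omega)$, the solution $u$ of \eqref{eq} is smooth for all $t>0$, with $\|u(t,\cdot)-U\|_{H^s_{loc}}$ bounded in terms of $t$, $\|u_0\|_{L^\infty}$ and $\|u_0-U\|_{L^2}$. On $[0,t_0/2]$, where $\varphi\equiv 0$, equation \eqref{Y-eq} reduces to a quasilinear parabolic equation in $Y$ whose source is purely time-dependent (only $-g(t)$ survives), so applying $\partial^\alpha$ for $|\alpha|\le s$ and performing energy estimates with smooth initial data $Y(0)=0$ yields the $H^s_{loc}$ bound on $\nabla Y$ with a constant that depends only on $t_0$. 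On $[t_0/2,\infty)$, one takes $u(t_0/2,\cdot)$ as a new, now $H^s$-smooth, initial state for the coupled $(u,Y)$ system, and runs the same $H^s_{loc}$ energy method as in Proposition \ref{prop:priori}, controlling the $w$-dependent terms by the $H^s_{loc}$-norm of $u-V$ and closing via the smallness provided by \eqref{ass-3} and the already-proven \eqref{lower-Y-1}.

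The main obstacle is the transition region $t\in[t_0/2,t_0]$, where $\varphi$ and $\varphi'$ turn on the coupling between $Y$ and $u-V$ smoothly but with constants that depend quantitatively on $t_0$. Here one must use simultaneously (i) the $L^\infty$-bound on $\varphi'$, (ii) the low-regularity $L^2$-information inherited from $[0,t_0/2]$, and (iii) the parabolic-smoothing upgrade of $u$ to $H^s$ at time $t_0/2$, in order to propagate $H^s_{loc}$-control of $Y$ across the interval without losing powers of $\eps_0$. Organizing the estimates so that the $t_0$-dependent constants arising from each of these three mechanisms are absorbed into a single $C(t_0)$ is the principal technical point that distinguishes this proof from that of Proposition \ref{prop:priori}.
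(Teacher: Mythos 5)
Your proposal matches the paper's approach: the weighted $L^2$ estimates of Lemmas \ref{lem:Y}--\ref{lem:DY} with $u-V$ controlled through Lemma \ref{lem:cont-2}, a self-contained $H^s$ energy estimate for $Y$ on $[0,t_0/2]$ exploiting the vanishing of $w$, $h_M$ and the $\psi_M$-terms there (together with $\nabla^k g(t)=0$, since $g$ is $x$-independent), and the parabolic-regularization/cut-off argument already carried out for Proposition \ref{prop:priori} to cover $t\ge t_0/2$. The one miscalibration is the concern about a ``transition region'' $[t_0/2,t_0]$: since $\varphi'$ never appears in the spatial energy estimates (the cutoff enters only through $w$, not through any $\partial_t$-commutator), and the global-in-time cut-off argument of Proposition \ref{prop:priori} applies uniformly on any interval bounded away from $t=0$, there is no genuine third regime — the $t_0$-dependence of $C(t_0)$ comes entirely from Lemma \ref{lem:cont-2} and the length $t_0/2$ of the initial interval.
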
 

The next subsections are devoted to the proofs of Proposition \ref{prop:priori} and Proposition \ref{prop:priori2}.

\subsection{Proof of \eqref{lower-Y-1} in Proposition \ref{prop:priori} and \ref{prop:priori2} }

We first obtain a weighted $L^2$ estimates for $Y$ in the first term of the estimate \eqref{lower-Y-1}. For that, we use the assumptions \eqref{ass-2}, \eqref{ass-3} and \eqref{ass-5}, but do not need the smallness of the higher regularity $\nabla(u_0-U)\in H^{s-1}(\Omega)$. Notice that \eqref{ass-4} implies $u_0\in L^{\infty}(\Omega)$.

\begin{lemma}\label{lem:Y}
Let $Y$ be a solution of either \eqref{Y-eq} or \eqref{Y-eq} with $\varphi= 1$ for all $t>0$. Assume \eqref{ass-2}, \eqref{ass-3} and \eqref{ass-5}. Then, there exists a constant $C>0$ such that
\begin{align}
\begin{aligned} \label{Y-est}
\int_{\Omega}  |U^\prime(x_1+m(t))| (Y-m(t))^2 dx + \int_0^t \int_{\Omega} |U^\prime(x_1+m(t))|  |\nabla Y|^2 dx ds
\le C\eps_0^3,\quad \forall t\in (0, T].
\end{aligned}
\end{align}
\end{lemma}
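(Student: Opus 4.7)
The plan is to adapt the weighted energy argument developed in Section~\ref{sec:special} for the special perturbation to the inhomogeneous equation \eqref{Y-eq}. I first set $\widetilde Y(t,x):=Y(t,x)-m(t)$. Translation invariance of $\int_\Omega|U'|dx$ together with the definition \eqref{mt} of $m(t)$ yields the zero-mean identity $\int_\Omega |U'(x_1+m(t))|\widetilde Y\,dx=0$, which is precisely the hypothesis required to apply Lemma~\ref{lem:fund} to $\widetilde Y$. Moreover, since $Y|_{t=0}=0$ (and hence $m(0)=0$), the initial weighted energy vanishes; consequently the entire bound $\eps_0^3$ on the right-hand side of \eqref{Y-est} must be produced by the forcing.

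I would then rewrite \eqref{Y-eq} as an equation for $\widetilde Y$, picking up an extra source $-m'(t)$ on the right, multiply it by $|U'(x_1+m(t))|\widetilde Y$, and integrate over $\Omega$. Following the computation \eqref{LL0}--\eqref{LL2-11}, the shock identity \eqref{U-prime} makes the linear transport term combine with the second derivative of the weight into a total $x_1$-derivative, the transverse convection terms become $x_i$-derivatives of periodic functions via antiderivatives $F_i,G_i$, and the Laplacian produces the expected dissipation. What survives is
\[
\frac{d}{dt}\int_\Omega |U'(x_1+m(t))|\frac{\widetilde Y^2}{2}dx+\int_\Omega |U'(x_1+m(t))||\nabla\widetilde Y|^2 dx=\mathcal R_1+\mathcal R_2+\mathcal R_3+\mathcal R_4,
\]
where $\mathcal R_1$ collects the quasilinear difference $[A_1'(U(Y+x_1))-A_1'(U(x_1+m(t)))]$, $\mathcal R_2$ the term $A_1'(U(Y+x_1))|\nabla Y|^2\widetilde Y$, $\mathcal R_3$ the coupling $-w\cdot\nabla Y\,\widetilde Y$, and $\mathcal R_4$ the source $-(w_1-h_M)\psi_M-h_M-g$ together with the $m'(t)$ correction and $\partial_t|U'(x_1+m(t))|$.

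The first two error terms are handled exactly as in Section~\ref{sec:special}. The pointwise bound $|\widetilde Y|\le C\eps_0(|x_1+m(t)|+1)$ derived in \eqref{Y-est-0} (which uses $\|\nabla Y\|_{L^\infty}\lesssim\eps_0$ via Sobolev embedding $H^s_{loc}\hookrightarrow L^\infty_{loc}$ for $s>N/2$, granted by \eqref{ass-3}) forces $|\theta\widetilde Y+x_1+m(t)|\ge \tfrac23|x_1+m(t)|-C$; then \eqref{ine-1} gives $|U''(\theta\widetilde Y+x_1+m(t))|^{3/2}\lesssim|U'(x_1+m(t))|$, and Lemma~\ref{lem:fund} absorbs $\mathcal R_1+\mathcal R_2$ into the weighted dissipation at the cost of a factor $\eps_0\ll1$. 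For $\mathcal R_3$ and $\mathcal R_4$, I would extract the factor $|u-V|$ from $w$, $h_M$, and $g$, split the weight as $|U'|^{1/2}\cdot|U'|^{1/2}$, apply Cauchy--Schwarz together with Lemma~\ref{lem:poincare}, and invoke the $L^2$-contraction of $u-V$ from Lemma~\ref{lem:cont} (or Lemma~\ref{lem:cont-2} in the case with cutoff $\varphi$) to obtain $\int_0^T\|u-V\|_{L^2}^2 dt\le C\|u_0-U\|_{L^2}^2\le C\eps_0^3$.

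The main obstacle I anticipate is the simultaneous control of $\mathcal R_3$ and the $m'(t)$-correction: $m'$ is given by \eqref{mt2} in terms of $Y_t$, which carries no a priori smallness. The trick is to substitute \eqref{Y-eq} directly for $Y_t$ inside \eqref{mt2} and exploit the cancellations built into the definitions of $h_M$ and $g$, so that $|m'(t)|$ is ultimately bounded by $C(\eps_0\|\sqrt{|U'|}\nabla\widetilde Y\|_{L^2}+\|u-V\|_{L^2})$. With this control of $m'$, the time-derivative of the weight produces a term absorbable by the dissipation times $\eps_0$, and time integration, using $\widetilde Y|_{t=0}=0$ and the contraction bound $\int_0^T\|u-V\|_{L^2}^2 dt\lesssim\eps_0^3$, then yields \eqref{Y-est}.
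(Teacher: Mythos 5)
Your overall plan coincides with the paper's: introduce $\widetilde Y=Y-m(t)$, exploit the zero-mean identity $\int_\Omega|U'(x_1+m(t))|\widetilde Y\,dx=0$ and the shock identity \eqref{U-prime} so that the linear transport and the second derivative of the weight cancel to a total derivative, push the transverse convection through antiderivatives, test with $|U'(x_1+m(t))|\widetilde Y$, and close via Lemmas~\ref{lem:poincare}, \ref{lem:fund} and the contraction Lemma~\ref{lem:cont} (or \ref{lem:cont-2}), controlling $m'(t)$ by substituting \eqref{Y-eq} for $Y_t$ in \eqref{mt2}. This is exactly the route the paper takes.

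However there is a genuine gap in how you close the time integration. You invoke $\int_0^T\|u-V\|_{L^2}^2\,dt\le C\|u_0-U\|_{L^2}^2\le C\eps_0^3$, but Lemma~\ref{lem:cont} does \emph{not} give this. It gives the $L^\infty_t L^2_x$ bound on $u-V$ plus the time-integrability of $\|\nabla(u-V)\|_{L^2}^2$ and of $\bigl(\int_\Omega(u-V)U'\,dx\bigr)^2$; the quantity $\int_0^T\|u-V\|_{L^2}^2\,dt$ is simply not controlled, and over $T\to\infty$ it has no reason to be finite. The same error infects your stated bound on $m'(t)$: the correct estimate \eqref{mp} carries $|\int_\Omega U'(u-V)\,dx|$ and $\|\nabla(u-V)\|_{L^2}$ on the right, \emph{not} $\|u-V\|_{L^2}$. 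To close, every appearance of $u-V$ must be funneled through Lemma~\ref{lem:poincare} so that the time integral sees precisely the dissipation quantities Lemma~\ref{lem:cont} controls. Concretely: for the coupling $-\int_\Omega w\cdot\nabla Y\,|U'|\widetilde Y\,dx$ the paper uses a Gagliardo--Nirenberg interpolation so that $u-V$ enters only as $\|u-V\|_{L^2}^{1/(N-1)}$ (sublinear, absorbed into the small sup-in-time prefactor) together with $\|\nabla(u-V)\|_{L^2}^{(N-2)/(N-1)}$, yielding $\|\nabla(u-V)\|_{L^2}^2$ after Young; for the $\psi_M$ term one applies Poincar\'e to $w_1-h_M$ on the slab to reduce to $\|\nabla w_1\|_{L^2}$, and then bounds $\nabla w_1$ via the explicit formula \eqref{form-w}, the maximum principle, and Lemma~\ref{lem:poincare}, again arriving at $\|\nabla(u-V)\|_{L^2}$ and $\int_\Omega U'(u-V)\,dx$. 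A plain Cauchy--Schwarz extracting $\|u-V\|_{L^2}$ is not a substitute for these steps.
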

\begin{proof}
For notational simplification, we set $\tilde Y:=Y-m(t)$, and 
then rewrite the equation \eqref{Y-eq} into the form:
\begin{align*}
\begin{aligned}
&\partial_t \widetilde Y -A'_1(U( Y+x_1))\partial_{x_1} \widetilde Y +\sum_{i=2}^{N}A'_i(U( Y+x_1))\partial_{x_i}\widetilde Y\\
&\quad -A'_1(U(Y+x_1))|\nabla_x  Y|^2+w\cdot \nabla_x  Y-\Delta \widetilde  Y \\
& \quad  = -(w_{1}- h_M(t)) \psi_M (x_1+m(t)) - h_M(t) -g(t)-m'(t).
\end{aligned}
\end{align*}
Multiplying the above equation by $|U'(x_1+m(t))| \widetilde Y$, and using the same computations as in Section \ref{sec:special}, we have that
\begin{equation}\label{q-e1}
\begin{array}{ll}
\di \frac{d}{dt}\int_\Omega \frac12 |U^\prime(x_1+m(t))|\widetilde Y^2dx+\int_\Omega |U^\prime(x_1+m(t))||\nabla  Y|^2 dx=-\int_\Omega U^{\prime\prime}(x_1+m(t))m^\prime (t)\f{\widetilde Y^2}{2}dx\\
\di\qquad +\int_\Omega(A_1^\prime(U(Y+x_1)) -A_1^\prime(U(x_1+m(t))))|U^\prime(x_1+m(t))|\widetilde Y\partial_{x_1} \widetilde Ydx\\[3mm]
\di \qquad +\int_\Omega A_1^\prime(U( Y+x_1))|U^\prime(x_1+m(t))|\widetilde Y|\nabla Y|^2 dx -\int_\Omega\omega\cdot \nabla Y |U^\prime(x_1+m(t))|\widetilde Y dx\\
\di\qquad -\int_\Omega  (w_{1}- h_M(t)) \psi_M (x_1+m(t)) |U^\prime(x_1+m(t))|\widetilde Y dx\\
\di \qquad -(h_M(t)+g(t)+m'(t)) \int_\Omega  |U^\prime(x_1+m(t))|\widetilde Y dx:=\sum_{i=1}^6 I_i.
\end{array}
\end{equation}
Since the assumption \eqref{ass-3} implies that $\|\nabla Y\|_{L^{\infty}((0,T)\times\Omega)}\leq C\eps_0\ll1$, it follows from \eqref{mt2} that
\begin{equation}\label{m-est}
\begin{array}{ll}
\di |m^\prime(t)|\leq C\Big[ \int_\Omega |U^\prime(x_1+m(t))| \Big(|A'_1(U(  Y+x_1))||\partial_{x_1} Y|+  \sum_{i=1}^{N}|A_i^\prime(U(Y+x_1))||\partial_{x_i} Y|\Big)dx\\
\di \quad +\int_\Omega |U^\prime(x_1+m(t))| |A_1^\prime(U(Y+x_1))||\nabla Y|^2dx +\int_\Omega |U^\prime(x_1+m(t))| |\omega||\nabla Y|dx\\
\di \quad +\Big|\int_\Omega |U^\prime(x_1+m(t))| \Delta Y dx\Big|+\int_\Omega|U^\prime(x_1+m(t))| |\omega_1-h_M(t)|\psi_M(x_1+m(t)) dx\\
\di\quad +|h_M(t)|+|g(t)|\Big]:=\sum_{i=1}^7 K_i.
\end{array}
\end{equation}
First, by Holder inequality, one has
$$
K_1\leq C \|\sqrt{|U^\prime(x_1+m(t))|}\nabla Y\|_{L^2(\Omega)},
$$
$$
K_2\leq C \|\sqrt{|U^\prime(x_1+m(t))|}\nabla Y\|^2_{L^2(\Omega)},
$$
and
$$
K_3\leq C \|\sqrt{U^\prime(x_1+m(t))}\nabla Y\|_{L^2(\Omega)}\|u-V\|_{L^2(\Omega)}.
$$
For $K_4$, integration by parts and Holder inequality give that
$$
K_4=|\int_\Omega U^{\prime\prime}(x_1+m(t)) \partial_{x_1} Y dx|\leq C  \|\sqrt{|U^\prime(x_1+m(t))|}\nabla Y\|_{L^2(\Omega)}.
$$
We use the same argument as in \eqref{J31} with \eqref{U-prop} to estimate $K_6$ as 
\begin{equation}\label{K6}
\begin{array}{ll}
\di K_6\leq C_M\int_\Omega|u-V||U^\prime(x_1+m(t))|^2 dx\leq C_M\| |U^{\prime}(x_1+m(t))| (u-V)\|_{L^2(\Omega)}\\
\di \quad ~ \leq C_M \Big|\int_\Omega U^{\prime}(x_1+m(t)) (u-V)dx\Big|+C_M\|\nabla(u-V)\|_{L^2(\Omega)},
\end{array}
\end{equation}
where we have used Lemma \ref{lem:poincare} with $\phi_1=|U^\prime|$ and $\phi_2=U^\prime$. \\
Likewise, we have 
$$
\begin{array}{ll}
\di K_5\leq C\| |U^{\prime}(x_1+m(t))| (u-V)\|_{L^2(\Omega)}+C|h_M(t)|\\
\di \quad~ \leq C_M \Big|\int_\Omega U^{\prime}(x_1+m(t)) (u-V)dx\Big|+C_M\|\nabla(u-V)\|_{L^2(\Omega)}.
\end{array}
$$
Therefore, we use the assumption \eqref{ass-2} and Lemma \ref{lem:cont} to get
\begin{equation}\label{mp}
\begin{array}{ll}
\di |m^\prime(t)|\leq C\|\sqrt{|U^\prime(x_1+m(t))|} \nabla Y\|_{L^2(\Omega)}\Big(1+\|\sqrt{|U^\prime(x_1+m(t))|} \nabla Y\|_{L^2(\Omega)}+\|u-V\|_{L^2(\Omega)}\Big)\\
\di \qquad\qquad +C\Big|\int_\Omega U^{\prime}(x_1+m(t)) (u-V)dx\Big|+C\|\nabla (u-V)\|_{L^2(\Omega)}\\
\di \qquad\quad\le C\|\sqrt{|U^\prime(x_1+m(t))|} \nabla Y\|_{L^2(\Omega)}\Big(1+\|\nabla Y\|_{L^{\infty}(0,T;L^2(\Omega))}+\|u-V\|_{L^{\infty}(0,T;L^2(\Omega))}\Big)\\
\di \qquad\qquad +C\Big|\int_\Omega U^{\prime}(x_1+m(t)) (u-V)dx\Big|+C\|\nabla (u-V)\|_{L^2(\Omega)}\\
\di \qquad\quad\le C\|\sqrt{|U^\prime(x_1+m(t))|} \nabla Y\|_{L^2(\Omega)}+C\Big|\int_\Omega U^{\prime}(x_1+m(t)) (u-V)dx\Big|+C\|\nabla (u-V)\|_{L^2(\Omega)}.
\end{array}
\end{equation}
Then, by using the fact \eqref{Y-est-0}  and Lemma \ref{lem:fund}, we can estimate $I_1$ as
\begin{align*}
\begin{aligned}
|I_{1}|&\leq C |m^\prime(t)|\int_\Omega|U^\prime(x_1+m(t))|\widetilde Y^2 dx\\
&\leq C|m^\prime(t)|\Big(\int_\Omega|U^\prime(x_1+m(t))|^{\f32}\widetilde Y^4 dx\Big)^{\f12}\\
&\leq C\eps_0|m^\prime(t)|\Big(\int_\Omega|U^\prime(x_1+m(t))|^{\f32}\widetilde Y^2 (|x_1+m(t)|^2+1) dx\Big)^{\f12}\\
&\leq C\eps_0|m^\prime(t)|\|\sqrt{|U^\prime(x_1+m(t))|}\nabla Y\|_{L^2(\Omega)}\\
&\leq C\eps_0\|\sqrt{|U^\prime(x_1+m(t))|}\nabla Y\|_{L^2(\Omega)}^2+C\Big(\int_\Omega U^{\prime}(x_1+m(t)) (u-V)dx\Big)^2+C\|\nabla (u-V)\|_{L^2(\Omega)}^2.
\end{aligned}
\end{align*}
For $I_2$, \eqref{ine-1} and Lemma \ref{lem:fund} yield that
\begin{align*}
\begin{aligned}
|I_2|&=\Big|\int_\Omega \int_0^1A_1''(U(\theta \widetilde Y+x_1+m(t)))U'(\theta \widetilde Y+x_1+m(t))d\theta ~\widetilde Y |U^\prime(x_1+m(t))|\widetilde Y\partial_{x_1} \widetilde Ydx \Big|\\
&\le C\int_\Omega |U'(x_1+m(t))|^{5/3}|\widetilde Y|^2 |\partial_{x_1} \widetilde Y| dx\\
&\leq C\|\partial_{x_1} \widetilde Y\|_{L^2(\Omega)}\||U^\prime(x_1+m(t))|~ |\widetilde Y|^2|U^\prime(x_1+m(t))|^{2/3}\|_{L^2(\Omega)}\\[3mm]
&\leq C\sup_{t\in[0,T]}\|\nabla Y\|_{L^2(\Omega)} \|\sqrt{|U^\prime(x_1+m(t))|}~\nabla Y\|^2_{L^2(\Omega)}\le C\eps_0 \|\sqrt{|U^\prime(x_1+m(t))|}~\nabla Y\|^2_{L^2(\Omega)},
\end{aligned}
\end{align*}
where we have used \eqref{ass-2} in the last inequality.\\
We use Lemma \ref{lem:fund} to estimate
\begin{equation*}
\begin{array}{ll}
\di |I_3|\leq C\|U^\prime(x_1+m(t))\widetilde Y\|_{L^2(\Omega)}\||\nabla Y|^{1+\f2N}\|_{L^2(\Omega)}\||\nabla Y|^{1-\f2N}\|_{L^\infty(\Omega)}\\[3mm]
\di \qquad \leq C \|\sqrt{|U^\prime(x_1+m(t))|}\nabla \widetilde Y\|_{L^2(\Omega)}
\|\nabla Y\|^{1+\f2N}_{L^{2(1+\f2N)}(\Omega)}\|\nabla Y\|_{L^\i(\Omega)}^{1-\f2N}\\[3mm]
\di \qquad \leq C\sup_{t\in[0,T]}\Big(\|\nabla Y\|_{L^{2}(\Omega)}^{\f2N}\|\nabla  Y\|_{L^\i(\Omega)} ^{1-\f2N}\Big)\|\sqrt{|U^\prime(x_1+m(t))|}\nabla  Y\|_{L^2(\Omega)}\|\Delta  Y\|_{L^2(\Omega)}\\[3mm]
\di \qquad \leq C\sup_{t\in[0,T]}\Big(\|\nabla  Y\|_{L^{2}(\Omega)}^{\f2N}\|\nabla  Y\|_{L^\i(\Omega)} ^{1-\f2N}\Big) \Big[\|\sqrt{|U^\prime(x_1+m(t))|}\nabla Y\|_{L^2(\Omega)}^2+\|\Delta  Y\|_{L^2(\Omega)}^2\Big].
\end{array}
\end{equation*}
Using Sobolev inequality with the assumption \eqref{ass-2}-\eqref{ass-3}, we have
\[
|I_3|\le C\eps_0  \Big(\|\sqrt{|U^\prime(x_1+m(t))|}\nabla Y\|_{L^2(\Omega)}^2+\|\Delta Y\|_{L^2(\Omega)}^2\Big).
\] 
For $I_4$, we use Gagliardo-Nirenberg interpolation to estimate
\begin{align*}
\begin{aligned}
|I_4|&\le \int_\Omega|u-V| |\nabla Y| |U^\prime(x_1+m(t))||\widetilde Y| dx\\
&\leq C\||U^\prime(x_1+m(t))| \widetilde Y\|_{L^2(\Omega)}  \|u-V\|_{L^{\f{2N(N-1)}{N^2-3N+4}}(\Omega)}\||\nabla Y|^{\f2N}\|_{L^{\f{N-1}{N-2}N}(\Omega)}\||\nabla Y|^{1-\f2N}\|_{L^\i(\Omega)}\\
&\le   C\|\sqrt{|U^\prime(x_1+m(t))|} \nabla\widetilde Y\|_{L^2(\Omega)} \|u-V\|^{\f1{N-1}}_{L^2(\Omega)}\|\nabla(u-V)\|^{\f{N-2}{N-1}}_{L^2(\Omega)}\\
&\quad\times\|\nabla Y\|^{\f{N-2}{N(N-1)}}_{L^2(\Omega)}\|\Delta Y\|^{\f1{N-1}}_{L^2(\Omega)}\|\nabla Y\|^{1-\f2N}_{L^\i(\Omega)}\\
&\le   \sup_{t\in[0,T]}\Big(\|u-V\|^{\f1{N-1}}_{L^2(\Omega)}\|\nabla Y\|^{\f{N-2}{N(N-1)}}_{L^2(\Omega)}\|\nabla Y\|_{L^\i(\Omega)} ^{1-\f2N}\Big)\\
& \quad\times \|\sqrt{|U^\prime(x_1+m(t))|} \nabla Y\|_{L^2(\Omega)}\|\nabla(u-V)\|^{\f{N-2}{N-1}}_{L^2(\Omega)} \|\Delta Y\|^{\f{1}{N-1}}_{L^2(\Omega)}.
\end{aligned}
\end{align*}
Using Young inequality and Lemma \ref{lem:cont} with assumptions \eqref{ass-2}, \eqref{ass-3} and \eqref{ass-5}, we have
\begin{align*}
\begin{aligned}
|I_4|\le C\eps_0\Big(\|\sqrt{|U^\prime(x_1+m(t))|} \nabla Y\|_{L^2(\Omega)}^2 +\|\nabla(u-V)\|^{2}_{L^2(\Omega)}+ \|\Delta  Y\|^{2}_{L^2(\Omega)}\Big).
\end{aligned}
\end{align*}
For $I_5$, we use Poincar\'e inequality to estimate
\begin{align*}
\begin{aligned}
|I_5|&\leq C\||U^\prime(x_1+m(t))| \widetilde Y\|_{L^2(\Omega)}\Big(\int_{\bbt^{N-1}}\int_{|x_1+m(t)|\leq M+1} (w_1-h(t)) dx\Big)^{\f12}\\[3mm]
&\leq C \|\sqrt{|U^\prime(x_1+m(t))|} \nabla \widetilde Y\|_{L^2(\Omega)} \|\nabla w_1\|_{L^2(\Omega)}\\[3mm]
& \leq \frac{1}{4}\|\sqrt{|U^\prime(x_1+m(t))|} \nabla Y\|^2_{L^2(\Omega)}+C \|\nabla w_1\|_{L^2(\Omega)}^2.
\end{aligned}
\end{align*}
To estimate $\|\nabla w_1\|_{L^2(\Omega)}$, we notice that since
\beq\label{form-w}
\frac{A(u|V)}{u-V} = (u-V) \int_0^1 \int_0^1 A^{\prime\prime} (V+s\tau(u-V)) \tau ds d\tau,
\eeq
and then
\begin{align*}
\begin{aligned}
&\nabla \frac{A(u|V)}{u-V}=\nabla(u-V)\int_0^1 \int_0^1 A^{\prime\prime} (V+s\tau(u-V)) \tau ds d\tau \\
& \qquad +(u-V)\int_0^1 \int_0^1 A''' (V+s\tau(u-V)) \tau \Big(U'( Y+x_1) (\nabla Y +e_1) +st\nabla (u-V) \Big)  ds d\tau, 
\end{aligned}
\end{align*}
we have
\begin{align*}
\begin{aligned}
\|\nabla w\|_{L^2(\Omega)} &\le C(\|\nabla(u-V)\|_{L^2(\Omega)}+(\|\nabla Y\|_{L^{\infty}}+1)\|(u-V)U'( Y+x_1)\|_{L^2(\Omega)}\\
&\quad+\|(u-V)\nabla(u-V)\|_{L^2(\Omega)}).
\end{aligned}
\end{align*}
We now use the maximum principle 
\beq\label{reg-1}
\|u\|_{L^{\infty}((0,\infty)\times\Omega)}\le \|u_0\|_{L^{\infty}(\Omega)}.
\eeq
Notice that if $u_0-U\in H^s(\Omega)$ with $s>\frac{N}{2}$, and thus $u_0-U\in L^{\infty}(\Omega)$, we have $u_0\in L^{\infty}(\Omega)$ thanks to $U\in L^{\infty}(\Omega)$. 
Thus, using maximum principle \eqref{reg-1} and $V\in L^{\infty}((0,T)\times\Omega)$, we see that
\[
\|(u-V)\nabla(u-V)\|_{L^2(\Omega)} \le C\|\nabla(u-V)\|_{L^2(\Omega)}.
\]
It now remains to estimate $\|(u-V)U'(Y+x_1)\|_{L^2(\Omega)}$. Using \eqref{Y-est-0}, \eqref{ine-1} and Lemma \ref{lem:poincare} with $\phi_1=|U'|$ and $\phi_2=U'$, we have
\begin{equation}
\begin{array}{ll}
\di \|(u-V)U'( Y+x_1)\|^2_{L^2(\Omega)}\\
\di\quad \leq C\Big[ \|(u-V)U'(x_1+m)\|^2_{L^2(\Omega)}+\|(u-V)(U'(Y+x_1)-U^\prime(x_1+m))\|^2_{L^2(\Omega)}\Big]\\
\di \quad \leq C\Big[\|(u-V)U'(x_1+m)\|^2_{L^2(\Omega)}+\|(u-V)\int_0^1U^{\prime\prime}(\theta \widetilde Y+x_1+m)d\theta \widetilde Y\|^2_{L^2(\Omega)}\Big]\\
\di \quad \leq C \|(u-V)\sqrt{|U'(x_1+m)|}\|^2_{L^2(\Omega)}\\
\di \quad \leq C\Big(\int_\Omega U^\prime(x_1+m)(u-V)dx\Big)^2+C\|\nabla(u-V)\|_{L^2(\Omega)}^2.
\end{array}
\end{equation}
Thus
\begin{align*}
\begin{aligned}
|I_5|\leq  \frac{1}{4}\|\sqrt{|U^\prime(x_1+m)|} \nabla Y\|^2_{L^2(\Omega)}+C\Big(\int_\Omega U^\prime(x_1+m)(u-V)dx\Big)^2+C\|\nabla(u-V)\|_{L^2(\Omega)}^2.
\end{aligned}
\end{align*}
Notice that since $\int_\Omega  |U^\prime(x_1+m)|\widetilde Y dx=0$, $I_6=0$.\\
Therefore, combining all estimates above together with Lemma \ref{lem:cont} and assumptions \eqref{ass-2}, \eqref{ass-3} and \eqref{ass-5}, we have that for all $t\in[0,T]$,
\begin{align*}
\begin{aligned}
&\int_\Omega  |U^\prime(x_1+m(t))|\widetilde Y^2dx+\int_0^T\int_\Omega |U^\prime(x_1+m(t))||\nabla  Y|^2 dxdt \\
&\quad\le C \int_0^T\Big(\int_\Omega U^\prime(x_1+m(t))(u-V)dx\Big)^2 dt+C\int_0^T\int_\Omega |\nabla(u-V)|^2dxdt\\
&\qquad+C\eps_0\int_0^T\int_\Omega |\Delta Y|^2dxdt \\
&\quad\le C\eps_0^3.
\end{aligned}
\end{align*}
\end{proof}

The next lemma provides the proof of $L^2$ estimates on $\nabla Y$ in the estimate \eqref{lower-Y-1}.

\begin{lemma}\label{lem:DY}
Under the same assumptions as in Lemma \ref{lem:Y}, there exists a constant $C>0$ such that
\begin{align}
\begin{aligned} \label{DY-est}
\int_{\Omega} |\nabla Y|^2 dx + \int_0^T \int_{\Omega} |\Delta Y|^2 dx ds\leq C\eps_0^3,\quad \forall t\in(0,T].
\end{aligned}
\end{align}
\end{lemma}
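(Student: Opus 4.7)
The plan is to perform an $H^1$-type energy estimate on the shift equation \eqref{Y-eq}: multiply both sides by $-\Delta Y$ and integrate over $\Omega$. Since $Y|_{t=0}=0$, we have $\|\nabla Y(0)\|_{L^2}=0$, so integration in time on $[0,T]$ yields, formally,
\[
\tfrac12\|\nabla Y(t)\|_{L^2(\Omega)}^2 +\int_0^t\|\Delta Y\|_{L^2(\Omega)}^2\,ds=(\text{RHS}),
\]
and the whole task is to bound the RHS by $C\varepsilon_0^3$, absorbing a small multiple of $\|\Delta Y\|_{L^2}^2$ back into the LHS.

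For the linear drift terms $-A_1'(U(Y+x_1))\partial_{x_1}Y+\sum_{i\ge 2}A_i'(U(Y+x_1))\partial_{x_i}Y$ tested against $-\Delta Y$, the key step is to integrate by parts in each $\partial_{x_j}$ of $\Delta Y=\sum_j\partial_{x_j}^2 Y$ so as to push one derivative onto the coefficient $A_k'(U(Y+x_1))$. This produces $A_k''(U(Y+x_1))U'(Y+x_1)(\partial_{x_j}Y+\delta_{j1})$, thereby exposing a factor $|U'(Y+x_1)|$ which, by \eqref{Y-est-0}, is pointwise comparable to $|U'(x_1+m(t))|$. After the integration by parts, every resulting integrand is either of the form $|U'(x_1+m(t))||\nabla Y|^2$ (times bounded coefficients) or is cubic in $\nabla Y$ with the same weight. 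Integrating in time and applying the weighted estimate of Lemma \ref{lem:Y} gives a contribution bounded by $C\varepsilon_0^3$, while the cubic-in-$\nabla Y$ pieces are controlled by a factor $\|\nabla Y\|_{L^\infty}\le C\varepsilon_0$ (via Sobolev embedding using \eqref{ass-3} and $s>N/2$) times the same weighted norm, giving $O(\varepsilon_0^4)$. The purely nonlinear term $-A_1'(U)|\nabla Y|^2$ paired with $-\Delta Y$ is dealt with directly by Cauchy--Schwarz and the $L^\infty$ bound on $\nabla Y$: $\bigl|\int A_1'(U)|\nabla Y|^2\Delta Y\,dx\bigr|\le C\varepsilon_0\|\nabla Y\|_{L^2}\|\Delta Y\|_{L^2}$, yielding $\tfrac18\|\Delta Y\|_{L^2}^2+C\varepsilon_0^2\|\nabla Y\|_{L^2}^2$ and, after integrating in time, $O(\varepsilon_0^3)$ in view of \eqref{ass-2} (noting $\|\nabla Y\|_{L^2}\le\varepsilon_0$ and that the second factor carries an extra $\varepsilon_0$ via Lemma \ref{lem:Y}).

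For the remaining lower-order terms, the convection $w\cdot\nabla Y\,(-\Delta Y)$ is controlled using $|w|\le C|u-V|$ and the maximum principle $\|u\|_{L^\infty}\le\|u_0\|_{L^\infty}$, so Cauchy--Schwarz together with the $L^2$-contraction from Lemma \ref{lem:cont} (or Lemma \ref{lem:cont-2} in the setting of Proposition \ref{prop:priori2}, whence the dependence on $t_0$) and the smallness \eqref{ass-4} or \eqref{ass-5} provides an $O(\varepsilon_0^3)$ bound. The source terms $-h_M(t)$ and $-g(t)$ (constants in $x$) contribute $-h_M\int\Delta Y\,dx-g\int\Delta Y\,dx$, both of which vanish by the divergence theorem since $x'\in\mathbb{T}^{N-1}$ and $\partial_{x_1}Y$ decays as $|x_1|\to\infty$ (by \eqref{ass-2} plus \eqref{ass-3}). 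The remaining source $-(w_1-h_M)\psi_M(x_1+m(t))$ tested against $-\Delta Y$ is bounded via Young's inequality, followed by the weighted Poincar\'e inequality on the strip $|x_1+m(t)|\le M+1$ (exactly as in the estimate of $I_5$ in Lemma \ref{lem:Y}), reducing it to $C\|\nabla w_1\|_{L^2}^2$, which in turn is controlled by $\|\nabla(u-V)\|_{L^2}^2+\|u-V\|_{L^2}^2$ and hence by $\varepsilon_0^3$ after time integration.

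The main obstacle is the handling of the unweighted drift term $\int A_1'(U(Y+x_1))\partial_{x_1}Y\,\Delta Y\,dx$: since $A_1'(U)$ does \emph{not} vanish at $\pm\infty$, the naive bound $\|A_1'(U)\|_{L^\infty}\|\partial_{x_1}Y\|_{L^2}\|\Delta Y\|_{L^2}$ only delivers $O(\varepsilon_0^2)$ via \eqref{ass-2}, which is insufficient. The integration-by-parts maneuver described above is essential precisely because it trades the unweighted $\partial_{x_1}Y$ for a weighted $\sqrt{|U'(x_1+m(t))|}\nabla Y$, whose time-integrated square is $O(\varepsilon_0^3)$ by Lemma \ref{lem:Y}. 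Summing all contributions and choosing $\varepsilon_0$ small enough to absorb the $\|\Delta Y\|_{L^2}^2$ terms on the LHS closes the estimate \eqref{DY-est}.
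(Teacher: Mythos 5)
Your overall strategy---multiply the equation by $-\Delta Y$ and integrate by parts on the drift terms to expose a factor $U'(Y+x_1)$---does match the paper's. However, a central structural claim is false: $|U'(Y+x_1)|$ is \emph{not} pointwise comparable to $|U'(x_1+m(t))|$. From \eqref{Y-est-0} one only gets $|Y+x_1|\ge\tfrac{2}{3}|x_1+m(t)|-C$ for small $\eps_0$, hence by the exponential decay \eqref{U-prop} only the one-sided bound $|U'(Y+x_1)|\le C|U'(x_1+m(t))|^{2/3}$ (this is exactly what \eqref{ine-1} states); since $|U'(x_1+m(t))|^{2/3}$ is strictly larger at spatial infinity than $|U'(x_1+m(t))|$, integrals of $|U'(Y+x_1)||\nabla Y|^2$ are \emph{not} dominated by the weighted quantity controlled in Lemma \ref{lem:Y}. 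The paper instead retains the good negative term $-\int A_1''(U(Y+x_1))|U'(Y+x_1)|(\partial_{x_1}Y)^2$ produced by the integration by parts, splits the coefficient as $U'(Y+x_1)=U'(x_1+m(t))+[U'(Y+x_1)-U'(x_1+m(t))]$, and controls the transverse derivatives $\partial_{x_i}Y$ ($i\ge 2$) with the Poincar\'e inequality on $\bbt^{N-1}$ (valid since $\int_{\bbt^{N-1}}\partial_{x_i}Y\,dx'=0$), absorbing an $\eps_0$-small multiple of $\|\Delta Y\|_{L^2}^2$. None of these steps appears in your argument, and without them the quadratic drift contributions do not close.

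A second genuine gap concerns time integrability. Several of your closings leave behind $\int_0^T\|\nabla Y\|_{L^2}^2\,dt$ or $\int_0^T\|u-V\|_{L^2}^2\,dt$, neither of which is uniformly bounded in $T$: assumption \eqref{ass-2} and Lemma \ref{lem:cont} bound only the sup-in-time of those norms, while the genuinely time-integrable quantities available are $\int_0^T\|\Delta Y\|_{L^2}^2\,dt$, $\int_0^T\int|U'(x_1+m(t))||\nabla Y|^2\,dx\,dt$, $\int_0^T\|\nabla(u-V)\|_{L^2}^2\,dt$, and $\int_0^T\bigl(\int_\Omega U'(u-V)\,dx\bigr)^2\,dt$. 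In particular, bounding $\int A_1'(U)|\nabla Y|^2\,\Delta Y$ by $C\eps_0\|\nabla Y\|_{L^2}\|\Delta Y\|_{L^2}$ and Young leaves $C\eps_0^2\|\nabla Y\|_{L^2}^2$, which when integrated grows like $T$; the same difficulty arises for the $w\cdot\nabla Y\,\Delta Y$ term and for the cubic drift pieces (compounded there by the weight issue above). The paper avoids this by Gagliardo--Nirenberg interpolation (e.g.\ $\||\nabla Y|^{1+2/N}\|_{L^2}\||\nabla Y|^{1-2/N}\|_{L^\infty}\le\|\nabla Y\|_{L^2}^{2/N}\|\nabla Y\|_{L^\infty}^{1-2/N}\|\Delta Y\|_{L^2}$), so that each such contribution becomes a small prefactor times $\|\Delta Y\|_{L^2}^2$ or $\|\nabla(u-V)\|_{L^2}^2$; your proposal has no substitute for this maneuver, and as written the final bound $C\eps_0^3$ would not hold uniformly on $(0,T]$.
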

\begin{proof}
Multiplying the equation \eqref{Y-eq} by $-\Delta Y$ and integrating the resulting equation over $\Omega$ yield that
\begin{equation}\label{nye}
\begin{array}{ll}
\di \f{d}{dt}\int_\Omega\f12|\nabla Y|^2 dx+\int_\Omega |\Delta Y|^2dx\\[3mm]
\di=-\int_\Omega A^\prime_1(U( Y+x_1))\partial_{x_1} Y \Delta Y dx+\int_\Omega \sum_{i=2}^N A_i^\prime(U(Y+x_1))\partial_{x_i} Y \Delta Y dx\\[3mm]
\di \quad -\int_\Omega A_1^\prime(U(Y+x_1))|\nabla Y|^2\Delta Y dx-\int_\Omega w\cdot\nabla Y \Delta Y dx\\[3mm]
\di \quad -\int_\Omega (w_1-h_M(t))\psi_M(x_1+m(t))\Delta Y dx:=\sum_{i=1}^5 E_i.
\end{array}
\end{equation}
We now estimate the five terms on the right hand side of \eqref{nye}. First, integration by parts implies that
\begin{equation}\label{E1}
\begin{array}{ll}
\di E_1=\int_\Omega A^\prime_1(U( Y+x_1))\partial_{x_1} (\f{|\nabla Y|^2}{2})  dx+\int_\Omega A^{\prime\prime}_1(U( Y+x_1)) U^\prime( Y+x_1) \nabla ( Y+x_1)\cdot \nabla Y  \partial_{x_1}Ydx\\
\di \quad =-\int_\Omega A^{\prime\prime}_1(U( Y+x_1))U^\prime ( Y+x_1)(\partial_{x_1} Y+1)\f{|\nabla Y|^2}{2} dx\\
\di \quad \quad +\int_\Omega A^{\prime\prime}_1(U(
 Y+x_1)) U^\prime(
  Y+x_1) \sum_{i=2}^N(\partial_{x_i} Y)^2  \partial_{x_1}Ydx\\
\di \quad \quad +\int_\Omega A^{\prime\prime}_1(U(
 Y+x_1)) U^\prime(Y+x_1) (\partial_{x_1}Y)^3dx+\int_\Omega A^{\prime\prime}_1(U(
   Y+x_1)) U^\prime(
    Y+x_1) (\partial_{x_1}Y)^2dx\\[3mm]
\di \quad =-\int_\Omega A^{\prime\prime}_1(U(
 Y+x_1)) |U^\prime(
  Y+x_1)| (\partial_{x_1}Y)^2dx\\
\di\qquad -\int_\Omega A^{\prime\prime}_1(U(
 Y+x_1))U^\prime (
  Y+x_1)\partial_{x_1} Y\f{|\nabla Y|^2}{2} dx\\
\di \quad \quad +\int_\Omega A^{\prime\prime}_1(U(
 Y+x_1)) U^\prime(
  Y+x_1) \sum_{i=2}^N(\partial_{x_i} Y)^2  \partial_{x_1}Ydx\\
\di \quad \quad +\int_\Omega A^{\prime\prime}_1(U(
 Y+x_1)) U^\prime(
  Y+x_1) (\partial_{x_1}Y)^3dx-\int_\Omega A^{\prime\prime}_1(U(
   Y+x_1)) U^\prime(
    Y+x_1)\f12 \sum_{i=2}^N(\partial_{x_i}Y)^2 dx\\
\di\quad :=-\int_\Omega A^{\prime\prime}_1(U(
 Y+x_1)) |U^\prime(
  Y+x_1)| (\partial_{x_1}Y)^2dx+\sum_{i=1}^4 E_{1i}.
\end{array}
\end{equation}
Using the same arguments as in previous proofs, we estimate that for each $i=1,2,3$,
\begin{equation}\label{E11-2-3}
\begin{array}{ll}
\di |E_{1i}|\leq  \f18 \int_\Omega A^{\prime\prime}_1(U(
 Y+x_1)) |U^\prime(
  Y+x_1)| (\partial_{x_1}Y)^2dx+C\||\nabla Y|^{1+\f2N}\|^2_{L^2(\Omega)}\||\nabla Y|^{1-\f2N}\|_{L^\infty(\Omega)}^2\\[3mm]
\di  \leq  \f18 \int_\Omega A^{\prime\prime}_1(U( Y+x_1)) |U^\prime( Y+x_1)| (\partial_{x_1}Y)^2dx\\
\di \qquad\qquad\qquad\qquad  +C\sup_{t\in[0,T]}\Big(\|\nabla Y\|^{\f4N}_{L^2(\Omega)}\|\nabla Y\|_{L^\infty(\Omega)}^{2(1-\f2N)}\Big)\int_\Omega |\Delta Y|^2dx,
\end{array}
\end{equation}
and
\begin{equation}\label{E14}
\begin{array}{ll}
\di E_{14}=  - \int_\Omega A^{\prime\prime}_1(U( Y+x_1)) U^\prime(x_1+m(t))\f12 \sum_{i=2}^N(\partial_{x_i}Y)^2dx\\
\di \qquad\qquad   - \int_\Omega A^{\prime\prime}_1(U( Y+x_1)) \big(U^\prime( Y+x_1)-U^\prime(x_1+m(t))\big)\f12 \sum_{i=2}^N(\partial_{x_i}Y)^2dx\\
\di \qquad \leq  C\|\sqrt{|U^\prime(x_1+m(t))|}\nabla Y\|^2_{L^2(\Omega)} \\
\di\qquad\qquad - \int_\Omega A^{\prime\prime}_1(U( Y+x_1)) \int_0^1 U''(\theta\widetilde Y+x_1+m(t))d\theta  \widetilde Y \f12 \sum_{i=2}^N(\partial_{x_i}Y)^2dx\\
\end{array}
\end{equation}
Since 
\[
|U''(\theta \widetilde Y+x_1+m(t))  \widetilde Y| \le C|U'(x_1+m(t))|^{2/3}\eps_0(1+|x_1+m(t)|) \le C\eps_0, 
\]
and for each $i=2,\cdots,N$,
\[
\int_{\bbt^{N-1}} \partial_{x_i}Y dx'=0,
\]
we use Poincar\'e inequality to get
\[
E_{14}\le C\|\sqrt{|U^\prime(x_1+m(t))|}\nabla Y\|^2_{L^2(\Omega)} + C\v_0\int_\Omega |\Delta Y|^2dx.
\]
Similarly, since
\begin{equation}\label{E2}
\begin{array}{ll}
\di E_2 =-\int_\Omega \sum_{i=2}^NA^{\prime}_i(U( Y+x_1))\partial_{x_i} \big(\f{|\nabla Y|^2}{2}\big) dx\\
\di \quad \quad -\int_\Omega \sum_{i=2}^NA^{\prime\prime}_i(U( Y+x_1)) \partial_{x_i}Y U^\prime( Y+x_1) \nabla ( Y+x_1)\cdot \nabla Y  dx\\[3mm]
\di \quad =-\int_\Omega \sum_{i=2}^NA^{\prime\prime}_i(U( Y+x_1)) \partial_{x_i}Y U^\prime( Y+x_1)\f{|\nabla Y|^2}{2} dx\\
\di\qquad -\int_\Omega \sum_{i=2}^NA^{\prime\prime}_i(U( Y+x_1)) \partial_{x_i}Y U^\prime( Y+x_1) \partial_{x_1}Y  dx:=\sum_{i=1}^2 E_{2i},\\
\end{array}
\end{equation}
we estimate
\begin{equation}\label{E21}
\begin{array}{ll}
\di |E_{21}|\leq C\sum_{i=2}^N\|\partial_{x_i} Y\|_{L^2(\Omega)}\||\nabla Y|^{1+\f2N}\|_{L^2(\Omega)}\||\nabla Y|^{1-\f2N}\|_{L^\infty(\Omega)}\\
\di \qquad \leq C\|\nabla^2 Y\|_{L^2(\Omega)}\|\nabla Y\|^{1+\f2N}_{L^{2(1+\f2N)}(\Omega)}\|\nabla Y\|_{L^\infty(\Omega)}^{1-\f2N}\\
\di \qquad \leq C\sup_{t\in[0,T]}\Big(\|\nabla Y\|_{L^2(\Omega)}^{\f2N}\|\nabla Y\|_{L^\infty(\Omega)}^{1-\f2N}\Big) \|\Delta Y\|_{L^2(\Omega)}^2,
\end{array}
\end{equation}
and 
\begin{equation}\label{E22}
\begin{array}{ll}
\di E_{22}=-\int_{\Omega} \sum_{i=2}^NA^{\prime\prime}_i(U( Y+x_1)) \partial_{x_i}Y U^\prime(x_1+m(t)) \partial_{x_1}Y dx\\
\di\qquad\quad -\int_{\Omega} \sum_{i=2}^NA^{\prime\prime}_i(U(  Y+x_1))) \partial_{x_i}Y \big(U^\prime( Y+x_1)-U^\prime(x_1+m(t))\big) \partial_{x_1}Ydx \\
\di \qquad \leq C\|\sqrt{|U^\prime(x_1+m(t))|}\nabla  Y\|^2_{L^2(\Omega)}+C\int_\Omega\Big|\int_0^1 U^{\prime\prime}(\theta  \widetilde Y+x_1+m(t)) d\theta\Big|| \widetilde Y|\sum_{i=2}^N|\partial_{x_i} Y| |\partial_{x_1} Y|dx\\
\di \qquad \leq C\|\sqrt{|U^\prime(x_1+m(t))|}\nabla  Y\|^2_{L^2(\Omega)}+C\eps_0\int_\Omega\sqrt{|U'(x_1+m(t))|}
\sum_{i=2}^N|\partial_{x_i} Y| |\partial_{x_1} Y|dx\\
\di \qquad \leq C\|\sqrt{|U^\prime(x_1+m(t))|}\nabla  Y\|^2_{L^2(\Omega)}+C\eps_0\sum_{i=2}^N\|\partial_{x_i} Y\|_{L^2(\Omega)} \|\sqrt{|U^{\prime}(x_1+m(t))|}\partial_{x_1} Y\|_{L^2(\Omega)}\\
\di \qquad \leq C\|\sqrt{|U^\prime(x_1+m(t))|}\nabla  Y\|^2_{L^2(\Omega)}+C\v_0\sum_{i=2}^N\|\partial_{x_ix_i} Y\|_{L^2(\Omega)} \|\sqrt{|U^{\prime}(x_1+m(t))|}\partial_{x_1} Y\|_{L^2(\Omega)}\\
\di \qquad \leq C\|\sqrt{|U^\prime(x_1+m(t))|}\nabla  Y\|^2_{L^2(\Omega)}+C\v_0\sum_{i=2}^N\|\partial_{x_ix_i} Y\|_{L^2(\Omega)}^2.
\end{array}
\end{equation}
Likewise, we estimate
\begin{equation}\label{E3}
\begin{array}{ll}
\di |E_{3}|\leq C \|\Delta  Y\|_{L^2(\Omega)}\||\nabla Y|^{1+\f2N}\|_{L^2(\Omega)}\||\nabla Y|^{1-\f2N}\|_{L^\i(\Omega)}\\
\di \qquad \leq  \|\Delta  Y\|_{L^2(\Omega)}\|\nabla Y\|_{L^{2(1+\f2N)}(\Omega)}^{1+\f2N}\|\nabla Y\|_{L^\i(\Omega)} ^{1-\f2N}\\
\di \qquad \leq  \sup_{t\in[0,T]}\Big(\|\nabla Y\|_{L^{2}(\Omega)}^{\f2N}\|\nabla Y\|_{L^\i(\Omega)} ^{1-\f2N}\Big) \|\Delta  Y\|^2_{L^2(\Omega)}.
\end{array}
\end{equation}
and
\begin{equation}\label{E4}
\begin{array}{ll}
\di |E_{4}|\leq C\int|u-V||\nabla Y||\Delta Y| dx\\
\di\qquad \leq  \|u-V\|_{L^{\f{2N(N-1)}{N^2-3N+4}}(\Omega)}\|\Delta Y\|_{L^2(\Omega)}\||\nabla Y|^{\f2N}\|_{L^{\f{N-1}{N-2}N}(\Omega)}\||\nabla Y|^{1-\f2N}\|_{L^\i(\Omega)}\\
\di \qquad \leq  \|u-V\|^{\f1{N-1}}_{L^2(\Omega)}\|\nabla(u-V)\|^{\f{N-2}{N-1}}_{L^2(\Omega)}\|\Delta Y\|_{L^2(\Omega)}\|\nabla Y\|^{\f{N-2}{N(N-1)}}_{L^2(\Omega)}\|\Delta Y\|^{\f1{N-1}}_{L^2(\Omega)}\|\nabla Y\|^{1-\f2N}_{L^\i(\Omega)}\\
\di \qquad \leq  \sup_{t\in[0,T]}\Big(\|u-V\|^{\f1{N-1}}_{L^2(\Omega)}\|\nabla Y\|^{\f{N-2}{N(N-1)}}_{L^2(\Omega)}\|\nabla Y\|_{L^\i(\Omega)} ^{1-\f2N}\Big)\|\nabla(u-V)\|^{\f{N-2}{N-1}}_{L^2(\Omega)} \|\Delta  Y\|^{\f{N}{N-1}}_{L^2(\Omega)}\\
\di \qquad \leq  \sup_{t\in[0,T]}\Big(\|u-V\|^{\f1{N-1}}_{L^2(\Omega)}\|\nabla Y\|^{\f{N-2}{N(N-1)}}_{L^2(\Omega)}\|\nabla Y\|_{L^\i(\Omega)} ^{1-\f2N}\Big)\Big[\|\nabla(u-V)\|^{2}_{L^2(\Omega)}+ \|\Delta  Y\|^{2}_{L^2(\Omega)}\Big].
\end{array}
\end{equation}
Using the same estimates as the term $I_4$ in the proof of Lemma \ref{lem:Y}, we have 
\begin{equation}\label{E5}
\begin{array}{ll}
\di |E_{5}|\leq C\|\Delta Y\|_{L^2(\Omega)}\Big(\int_{\bbt^{N-1}}\int_{|x_1+m(t)|\leq M+1} (w_1-h(t)) dx\Big)^{\f12}\\
\di\qquad \leq \f18 \|\Delta Y\|_{L^2(\Omega)}^2+C\Big(\int_\Omega U^\prime(x_1+m(t))(u-V)dx\Big)^2+C\|\nabla(u-V)\|_{L^2(\Omega)}^2.
\end{array}
\end{equation}
Therefore, combining all estimates above together with Lemma \ref{lem:cont}, \ref{lem:Y} and assumptions \eqref{ass-2}, \eqref{ass-3} and \eqref{ass-5}, we have that for all $t\in[0,T]$,
\begin{align*}
\begin{aligned}
&\int_{\Omega} |\nabla Y|^2 dx + \int_0^T \int_{\Omega} |\Delta Y|^2 dx dt \\
&\quad\le C\int_0^T\int_\Omega |U'(x_1+m(t))| |\nabla Y|^2dxdt+C\int_0^T\Big(\int_\Omega U^\prime(x_1+m(t))(u-V)dx\Big)^2 dt\\
&\qquad+C\int_0^T\int_\Omega |\nabla(u-V)|^2dxdt\\
&\quad\le C\eps_0^3.
\end{aligned}
\end{align*}
which completes the proof.
\end{proof}

\subsection{Proof of \eqref{higher-Y-1} in Proposition \ref{prop:priori}}

We first complete the proof of Proposition \ref{prop:priori}. We first recall a priori estimates in Lemma \ref{lem:cont} and Lemma \ref{lem:Y}, \ref{lem:DY} i.e.,
\beq\label{u-1}
\|u-V\|_{L^{\infty}(0,T;L^2(\Omega))}+\|\nabla(u-V)\|_{L^2((0,T)\times\Omega)} \le C\eps_0^{3/2},
\eeq
and
\beq\label{Y-1}
\|\sqrt{|U'(x_1+m)|} (Y-m)\|_{L^{\infty}(0,T; L^2(\Omega))}+\|\nabla Y\|_{L^{\infty}(0,T; L^2(\Omega))}+\|\Delta Y\|_{L^2((0,T)\times\Omega)} \le C\eps_0^{3/2}.
\eeq

In order to complete the proof of Proposition \ref{prop:priori}, we need to show higher-order estimates: 
\[
\|\nabla  Y\|_{L^{\infty}(0,\infty; H^s_{loc}(\Omega))}+ \|\Delta  Y\|_{L^2(0,\infty; H^{s}_{loc}(\Omega))}\le C\eps_0^{3/2},
\]
where the constant $C>0$ depends on $s, N$.\\
For that, we will use the parabolic regularization, which provides a higher regularity estimates: for any fixed $T_*$,
\[
\|\nabla  Y\|_{L^{\infty}(T_*,\infty; H^s_{loc}(\Omega))}+ \|\Delta  Y\|_{L^2(T_*,\infty; H^{s}_{loc}(\Omega))}\le C(T_*)\eps_0^{3/2},
\]
where $C$ is a constant independent of $\eps_0$ if $T_*$ does not depend on $\eps_0$. However, we see that the life span $T_0$ of the local existence in Proposition \ref{prop:local} depends on the size of the above norm of $Y$, according to the proof of Proposition \ref{prop:local}. Therefore, we will get a shaper local-in-time estimate on $Y$ than Proposition \ref{prop:local} up to any fixed time $t_0>0$.

\subsubsection{Local-in-time estimates} We here get a local-in-time estimate.\\
We first get higher-order estimates on $u-V$, which is used in next step. \\
For any $1\le k\le s$, assume that there exists a constant $C>0$ such that
\beq\label{assume-uV}
\|u-V\|_{L^{\infty}(0,t_0; H^{k-1}(\Omega))}+\|\nabla(u-V)\|_{L^2(0,t_0; H^{k-1}(\Omega))} \le C\eps_0^{3/2},
\eeq
and 
\beq\label{assume-Yk}
\|\nabla Y\|_{L^{\infty}(0,t_0; H^{k-1}(\Omega))}+\|\Delta Y\|_{L^2(0,t_0; H^{k-1}(\Omega))} \le C \eps_0^{3/2}.
\eeq
We subtract \eqref{V-eq} from \eqref{eq} to get
\begin{align}
\begin{aligned} \label{u-V}
&\partial_t (u-V) + \sum_{i=1}^N\partial_{x_i} (A_i(u)-A_i(V)) - w\cdot \nabla V -\Delta(u-V)\\
&\quad = -U'(Y+x_1)\Big(w_{1}(1-\psi_M (x_1)) + h_M(t)(1-\psi_M (x_1)) +g(t)  \Big).
\end{aligned}
\end{align}
A simple computation with \eqref{u-V} implies that for all $t\in (0,1)$,
\begin{align*}
\begin{aligned} 
&\frac{1}{2}\frac{d}{dt}\int_{\Omega} |\nabla^{k}(u-V)|^2 dx +
\int_{\Omega} |\nabla^{k+1}(u-V)|^2 dxds \\
&\quad= \int_{\Omega}\nabla^{k+1}(u-V)\nabla^{k-1}\Big(\sum_{i=1}^N \partial_{x_i} (A_i(u)-A_i(V)) + w\cdot \nabla V   \\
&\qquad - U'(Y+x_1)\big(w_{1}(1-\psi_M (x_1)) + h_M(t)(1-\psi_M (x_1))+g(t)  \big)\Big) dx.
\end{aligned}
\end{align*}
Since 
\[
 \partial_{x_i} (A_i(u)-A_i(V))= A'_i(u)\partial_{x_i}(u-V)+(A'_i(u)-A'(V))\partial_{x_i}V,
\]
we rewrite the terms related to the flux as
\begin{align*}
\begin{aligned} 
& \int_{\Omega}\nabla^{k+1}(u-V)\sum_{i=1}^N \nabla^{k-1}\partial_{x_i} (A_i(u)-A_i(V)) dx\\
&\quad= \int_{\Omega}\nabla^{k+1}(u-V) \sum_{i=1}^N \Big[ \nabla^{k-1} \Big(A'_i(u)\partial_{x_i}(u-V)\Big) +\nabla^{k-1} \Big((A'_i(u)-A_i'(V))\partial_{x_i}V\Big) \Big] dx.
\end{aligned}
\end{align*}
Then, using Sobolev inequality, we estimate
\begin{align*}
\begin{aligned} 
&\Big| \int_{\Omega}\nabla^{k+1}(u-V)\sum_{i=1}^N \nabla^{k-1}\partial_{x_i} (A_i(u)-A_i(V)) dx\Big|\\
&\quad=\Big| \int_{\Omega}\nabla^{k+1}(u-V) \sum_{i=1}^N \Big[A'_i(u)\nabla^{k-1}\partial_{x_i}(u-V)+ \sum_{1\le l\le k-1}\binom{k-1}{l}\nabla^{l} A'_i(u)\nabla^{k-1-l}\partial_{x_i}(u-V)\\
&\qquad +\sum_{0\le m\le k-1}\binom{k-1}{m}\nabla^{m}(A'_i(u)-A'(V))\nabla^{k-1-m}\partial_{x_i}V  \Big]  dx\Big| \\
&\quad\le C \|\nabla^{k+1}(u-V)\|_{L^2(\Omega)}\Big[ \|\nabla^{k}(u-V)\|_{L^2(\Omega)}+ \|u-V\|_{H^{k-1}(\Omega)}^{\alpha}\Big(\|\nabla u\|_{H^{s-1}(\Omega)}^{\beta}+\|\nabla Y\|_{H^{k-1}(\Omega)}^{\gamma} \Big)\Big],
\end{aligned}
\end{align*}
where $\alpha, \beta, \gamma \ge1$ are some constants depending on $k$. Since $\nabla u_0\in H^{s-1}(\Omega)$, applying the energy method to \eqref{eq} together with \eqref{reg-1}, we have
\beq\label{reg-2}
\|\nabla u(t)\|_{H^{s-1}(\Omega)}^2 + \int_0^t\|\nabla^2 u(s)\|_{H^{s-1}(\Omega)}^2 ds \le e^{Ct}\|\nabla u_0\|_{H^{s-1}(\Omega)}^2.
\eeq
Moreover, since \eqref{assume-uV} and \eqref{ass-2} yield that for all $t\le t_0$,
\[
\|u-V\|_{H^{k-1}(\Omega)}^{\alpha} \le C\eps_0^{\frac{3\alpha}{2}} \le C\eps_0^{\frac{3}{2}},
\]
and 
\[
\|\nabla Y\|_{H^{k-1}(\Omega)}^{\gamma}\le C,
\]
we have that for all $t\le t_0$, 
\begin{align*}
\begin{aligned} 
&\Big| \int_{\Omega}\nabla^{k+1}(u-V)\sum_{i=1}^N \nabla^{k-1}\partial_{x_i} (A_i(u)-A_i(V)) dx\Big|\\
&\qquad\le\frac{1}{8}\|\nabla^{k+1}(u-V)\|_{L^2(\Omega)}^2 +C \|\nabla^{k}(u-V)\|_{L^2(\Omega)}^2+C\eps_0^3.
\end{aligned}
\end{align*}
Similarly, using \eqref{form-w}, we have
\begin{align*}
\begin{aligned} 
&\Big| \int_{\Omega}\nabla^{k+1}(u-V) \nabla^{k-1}(w\cdot \nabla V) dx\Big|\\
&\quad\le C \|\nabla^{k+1}(u-V)\|_{L^2(\Omega)} \Big[ \|u-V\|_{H^{k-1}(\Omega)}^{\alpha}\Big(\|\nabla u\|_{H^{s-1}(\Omega)}^{\beta}+\|\nabla Y\|_{H^{k-1}(\Omega)}^{\gamma} \Big) \Big]\\
&\quad\le\frac{1}{8}\|\nabla^{k+1}(u-V)\|_{L^2(\Omega)}^2 +C\eps_0^3.
\end{aligned}
\end{align*}
Likewise, we have
\begin{align*}
\begin{aligned} 
&\Big| \int_{\Omega}\nabla^{k+1}(u-V)\nabla^{k-1}\Big(U'(Y+x_1) w_{1}(1-\psi_M (x_1)) \Big)  dx\Big|\\
&\quad\le\frac{1}{8}\|\nabla^{k+1}(u-V)\|_{L^2(\Omega)}^2 +C\eps_0^3.
\end{aligned}
\end{align*}
Moreover, since
\begin{align*}
\begin{aligned} 
&|h_M(t)|\le C_M \|u-V\|_{L^2(\Omega)},\\
&|g(t)|\le C\|u-V\|_{L^2(\Omega)},
\end{aligned}
\end{align*}
we have
\begin{align*}
\begin{aligned} 
&\Big| \int_{\Omega}\nabla^{k+1}(u-V) \nabla^{k-1}\Big(U'(Y+x_1)\big( h_M(t)(1-\psi_M (x_1))+g(t)  \big)\Big)  dx\Big|\\
&\quad\le\frac{1}{8}\|\nabla^{k+1}(u-V)\|_{L^2(\Omega)}^2 +C\eps_0^3.
\end{aligned}
\end{align*}
Therefore we have
\[
\frac{d}{dt}\int_{\Omega} |\nabla^{k}(u-V)|^2 dx +
\int_{\Omega} |\nabla^{k+1}(u-V)|^2 dxds \le C \|\nabla^{k}(u-V)\|_{L^2(\Omega)}^2+C\eps_0^3.
\]
Using \eqref{ass-4} and \eqref{assume-uV}, we have that
\[
\|\nabla^{k}(u-V)\|_{L^{\infty}(0,t_0; L^2(\Omega))}+\|\nabla^{k+1}(u-V)\|_{L^2((0,t_0)\times\Omega))} \le C \eps_0^{3/2},
\]
which together with \eqref{u-1} and \eqref{assume-uV} implies that 
\beq\label{high-V1}
\|u-V\|_{L^{\infty}(0,t_0; H^{s}(\Omega))}+\|\nabla(u-V)\|_{L^2(0,t_0; H^{s}(\Omega))} < C \eps_0^{3/2}.
\eeq

We next estimate $\nabla^{k+1} Y$ as follows. A straightforward computation for \eqref{Y-eq} with $\varphi\equiv 1$ implies that
\begin{align*}
\begin{aligned} 
&\frac{1}{2}\frac{d}{dt}\int_{\Omega} |\nabla^{k+1}Y|^2 dx +
\int_{\Omega} |\nabla^{k+2}Y|^2 dxds \\
&\quad= -\int_{\Omega} \nabla^{k+2}Y \nabla^{k}\Big( A'_1(U(Y+x_1))\partial_{x_1}Y
 -\sum_{i=2}^{N}A'_i(U(Y+x_1))\partial_{x_i}Y    \\
&\qquad   -A'_1(U(Y+x_1))|\nabla Y|^2-w\cdot \nabla Y + (w_{1}-h_M(t)) \psi_M (x_1)\Big) dxds.
\end{aligned}
\end{align*}
We use the same arguments as before, to estimate
\begin{align*}
\begin{aligned} 
&\Big|\int_{\Omega} \nabla^{k+2}Y \nabla^{k}\Big( A'_1(U(Y+x_1))\partial_{x_1}Y\Big)dx \Big|\\
&\quad=\Big|\int_{\Omega} \nabla^{k+2}Y \Big[ A'_1(U(Y+x_1))\nabla^{k}\partial_{x_1}Y+ \sum_{1\le l\le k}\binom{k}{l}\nabla^{l}A'_1(U(Y+x_1))\nabla^{k-l} \partial_{x_1}Y \Big]  dx\Big| \\
&\quad\le C \|\nabla^{k+2}Y\|_{L^2(\Omega)} \Big[ \|\nabla^{k+1}Y\|_{L^2(\Omega)}+ \|\nabla Y\|_{H^{k-1}(\Omega)}^{\alpha}\Big],
\end{aligned}
\end{align*}
where $\alpha\ge1$ is some constant depending on $k$. Thus, it follows from \eqref{assume-Yk} that
\begin{align*}
\begin{aligned} 
\Big|\int_{\Omega} \nabla^{k+2}Y \nabla^{k}\Big( A'_1(U(Y+x_1))\partial_{x_1}Y\Big)dx \Big|\le\frac{1}{8}\|\nabla^{k+2}Y\|_{L^2(\Omega)}^2 +C\|\nabla^{k+1}Y\|_{L^2(\Omega)}^2+C\eps_0^3.
\end{aligned}
\end{align*}
Likewise, we have
\begin{align*}
\begin{aligned} 
&\Big|\int_{\Omega} \nabla^{k+2}Y\sum_{i=2}^{N}\nabla^{k}\Big(A'_i(U(Y+x_1))\partial_{x_i}Y\Big) dx\Big|\\
&\qquad\le\frac{1}{8}\|\nabla^{k+2}Y\|_{L^2(\Omega)}^2 +C\|\nabla^{k+1}Y\|_{L^2(\Omega)}^2+C\eps_0^3,\\
&\Big|\int_{\Omega} \nabla^{k+2}Y \nabla^{k}\Big(A'_1(U(Y+x_1))|\nabla Y|^2\Big) dxds\Big|\\
&\qquad\le\frac{1}{8}\|\nabla^{k+2}Y\|_{L^2(\Omega)}^2 +C\|\nabla^{k+1}Y\|_{L^2(\Omega)}^2+C\eps_0^3.
\end{aligned}
\end{align*}
Using \eqref{form-w} and \eqref{high-V1}, we estimate that for some constants $\alpha, \beta\ge1$,
\begin{align*}
\begin{aligned} 
\Big|\int_{\Omega} \nabla^{k+2}Y \nabla^{k}(w\cdot \nabla Y)  dx\Big|&\le C \|\nabla^{k+2}Y\|_{L^2(\Omega)}  \|u-V\|_{H^{k}(\Omega)}^{\alpha}\|\nabla Y\|_{H^{k-1}(\Omega)}^{\beta} \\
&\le\frac{1}{8}\|\nabla^{k+2}Y\|_{L^2(\Omega)}^2+C\eps_0^3,\\
\end{aligned}
\end{align*}
and
\begin{align*}
\begin{aligned} 
\Big|\int_{\Omega} \nabla^{k+2}Y\nabla^{k}\Big((w_{1}-h_M(t)) \psi_M (x_1)\Big)  dx\Big|&\le C \|\nabla^{k+2}Y\|_{L^2(\Omega)}  \|u-V\|_{H^{k}(\Omega)}\\
&\le\frac{1}{8}\|\nabla^{k+2}Y\|_{L^2(\Omega)}^2+C\eps_0^3.
\end{aligned}
\end{align*}
Therefore, we have 
\begin{align*}
\begin{aligned} 
\frac{d}{dt}\int_{\Omega} |\nabla^{k+1}Y|^2 dx +
\int_{\Omega} |\nabla^{k+2}Y|^2 dxds \le C\|\nabla^{k+1}Y\|_{L^2(\Omega)}^2+C\eps_0^3.
\end{aligned}
\end{align*}
Using \eqref{assume-Yk} and $Y|_{t=0}=0$, we have
\[
\|\nabla^{k+1}Y\|_{L^{\infty}(0,t_0; L^2(\Omega))}+\|\nabla^{k+2}Y\|_{L^2((0,t_0)\times\Omega))} \le C \eps_0^{3/2},
\]
which together with \eqref{Y-1} and \eqref{assume-Yk} implies that
\[
\|\nabla Y\|_{L^{\infty}(0,t_0; H^{s}(\Omega))}+\|\Delta Y\|_{L^2(0,t_0; H^{s}(\Omega))} \le C \eps_0^{3/2}.
\]

\subsubsection{Global-in-time estimates}

In order to complete the proof of Proposition \ref{prop:priori}, we need to show global-in-time estimates: 
\[
\|\nabla  Y\|_{L^{\infty}(t_0,\infty; H^s_{loc}(\Omega))}+ \|\Delta  Y\|_{L^2(t_0,\infty; H^{s}_{loc}(\Omega))}<C\eps_0^{3/2},
\]
where the constant $C>0$ depends on $s, N$. To this end, we use a parabolic regularization.\\

We first get higher-order estimates on $u-V$, which is used in estimates for $Y$. \\
For any $r>0$, we set $Q_r:= (-\frac{1}{r},0)\times \Omega_r$, $\Omega_r:=(-\frac{1}{r},\frac{1}{r})\times\bbt^{N-1}$. Define smooth functions $\phi_r$ satisfying $0\le \phi_r\le 1$ and
\[
\phi_r(t,y)=\left\{ \begin{array}{ll}
          1& \mbox{if $ (t,x)\in Q_r$},\\
        0& \mbox{if $(t,x)\in Q_{r-1}^c$}.\end{array} \right.
\]  
For any $1\le k\le s$, assume that
\beq\label{assume-V}
\|u-V\|_{L^{\infty}(-\frac{1}{k},0; H^{k-1}(\Omega_{k}))}+\|\nabla(u-V)\|_{L^2(-\frac{1}{k},0; H^{k-1}(\Omega_{k}))} < C\eps_0^{3/2}.
\eeq
A simple computation with \eqref{u-V} implies that for all $t\in (-\frac{1}{k},0)$,
\begin{align*}
\begin{aligned} 
&\frac{1}{2}\int_{\Omega_{k}}\phi_{k+1}^2|\nabla^{k}(u-V)|^2 dx +
\int_{-\frac{1}{k}}^t\int_{\Omega_{k}}\phi_{k+1}^2|\nabla^{k+1}(u-V)|^2 dxds \\
&\quad= \int_{-\frac{1}{k}}^t\int_{\Omega_{k}}\Big[\phi_{k+1}\partial_t\phi_{k+1}|\nabla^{k}(u-V)|^2 -2\phi_{k+1}\nabla\phi_{k+1}\nabla^{k}(u-V) \nabla^{k+1}(u-V)\\
&\qquad+ \nabla(\phi_{k+1}^2\nabla^{k}(u-V))\nabla^{k-1}\Big(\sum_{i=1}^N \partial_{x_i} (A_i(u)-A_i(V)) + w\cdot \nabla V   \\
&\qquad - U'(Y+x_1)\big(w_{1}(1-\psi_M (x_1+m(t))) + h_M(t)(1-\psi_M (x_1+m(t)))+g(t)  \big)\Big) \Big] dxds.
\end{aligned}
\end{align*}
The assumption \eqref{assume-V} yields that
\[
\Big|\int_{-\frac{1}{k}}^t\int_{\Omega_{k}}\phi_{k+1}\partial_t\phi_{k+1}|\nabla^{k}(u-V)|^2 dxds\Big|\le C \|\nabla^{k}(u-V)\|_{L^2(Q_{k})}^2
\le C\eps_0^3,
\]
and
\begin{align*}
\begin{aligned} 
&\Big|\int_{-\frac{1}{k}}^t\int_{\Omega_{k}}\phi_{k+1}\nabla\phi_{k+1}\nabla^{k}(u-V) \nabla^{k+1}(u-V) dxds \Big| \\
&\quad\le \frac{1}{8}\int_{-\frac{1}{k}}^t\int_{\Omega_{k}}\phi_{k+1}^2|\nabla^{k+1}(u-V)|^2 dxds+ C\|\nabla^{k}(u-V)\|_{L^2(Q_{k})}^2\\
&\quad\le \frac{1}{8}\int_{-\frac{1}{k}}^t\int_{\Omega_{k}}\phi_{k+1}^2|\nabla^{k+1}(u-V)|^2 dxds +C\eps_0^3,
\end{aligned}
\end{align*}
where the constants $C$ appeared here and below depend on $k$.\\
We use the same arguments as the local-in-time estimates to get
\begin{align*}
\begin{aligned} 
&\Big|\int_{-\frac{1}{k}}^t\int_{\Omega_{k}}\nabla(\phi_{k+1}^2\nabla^{k}(u-V))\sum_{i=1}^N \nabla^{k-1}\partial_{x_i} (A_i(u)-A_i(V)) dxds\Big|\\
&\quad=\Big|\int_{-\frac{1}{k}}^t\int_{\Omega_{k}}\Big[2\phi_{k+1}\nabla\phi_{k+1}\nabla^{k}(u-V)+ \phi_{k+1}^2\nabla^{k+1}(u-V)\Big]\\
&\qquad\cdot\sum_{i=1}^N \Big[ \nabla^{k-1} \Big(A'_i(u)\partial_{x_i}(u-V)\Big) + \nabla^{k-1} \Big((A'_i(u)-A'(V))\partial_{x_i}V\Big) \Big] dxds\\
&\quad \le C\int_{-\frac{1}{k}}^t \Big[ \|\nabla^{k}(u-V)\|_{L^2(\Omega_{k})}+\Big(\int_{\Omega_{k}}\phi_{k+1}^2|\nabla^{k+1}(u-V)|^2 dx\Big)^{1/2}\Big] \\
&\qquad \times\Big[ \|\nabla^{k}(u-V)\|_{L^2(\Omega_k)}+ \|u-V\|_{H^{k-1}(\Omega_k)}^{\alpha}\Big(\|\nabla u\|_{H^{s-1}(\Omega_k)}^{\beta}+\|\nabla Y\|_{H^{k-1}(\Omega_k)}^{\gamma}\Big) \Big] ds,
\end{aligned}
\end{align*}
where $\alpha, \beta, \gamma \ge1$ are some constants depending on $k$. Thanks to \eqref{u-1}, applying the parabolic regularization to \eqref{eq} together with \eqref{reg-1}, we have
\[
u\in L^{\infty}(-\frac{1}{k},0; H^{s}(\Omega_{k})),
\]
which together with \eqref{assume-V} and \eqref{ass-3} implies that
\begin{align*}
\begin{aligned} 
&\Big|\int_{-\frac{1}{k}}^t\int_{\Omega_{k}}\nabla(\phi_{k+1}^2\nabla^{k}(u-V))\sum_{i=1}^N \nabla^{k-1}\partial_{x_i} (A_i(u)-A_i(V)) dxds\Big|\\
&\qquad\le \frac{1}{8}\int_{-\frac{1}{k}}^t\int_{\Omega_{k}}\phi_{k+1}^2|\nabla^{k+1}(u-V)|^2 dxds +C\eps_0^3.
\end{aligned}
\end{align*}
Likewise, we have
\begin{align*}
\begin{aligned} 
&\Big|\int_{-\frac{1}{k}}^t\int_{\Omega_{k}}\nabla(\phi_{k+1}^2\nabla^{k}(u-V)) \nabla^{k-1}(w\cdot \nabla V) dxds\Big|\\
&\quad\le \frac{1}{8}\int_{-\frac{1}{k}}^t\int_{\Omega_{k}}\phi_{k+1}^2|\nabla^{k+1}(u-V)|^2 dxds +C\eps_0^3,
\end{aligned}
\end{align*}
\begin{align*}
\begin{aligned} 
&\Big|\int_{-\frac{1}{k}}^t\int_{\Omega_{k}}\nabla(\phi_{k+1}^2\nabla^{k}(u-V)) \nabla^{k-1}\Big(U'(Y+x_1) w_{1}(1-\psi_M (x_1)) \Big)  dxds\Big|\\
&\quad\le \frac{1}{8}\int_{-\frac{1}{k}}^t\int_{\Omega_{k}}\phi_{k+1}^2|\nabla^{k+1}(u-V)|^2 dxds +C\eps_0^3,
\end{aligned}
\end{align*}
and
\begin{align*}
\begin{aligned} 
&\Big|\int_{-\frac{1}{k}}^t\int_{\Omega_{k}}\nabla(\phi_{k+1}^2\nabla^{k}(u-V)) \nabla^{k-1}\Big(U'(Y+x_1)\big( h_M(t)(1-\psi_M (x_1+m(t)))+g(t)  \big)\Big)  dxds\Big|\\
&\quad\le \frac{1}{8}\int_{-\frac{1}{k}}^t\int_{\Omega_{k}}\phi_{k+1}^2|\nabla^{k+1}(u-V)|^2 dxds +C\eps_0^3.
\end{aligned}
\end{align*}
Therefore, we get
\begin{align*}
\begin{aligned} 
\int_{\Omega_{k}}\phi_{k+1}^2|\nabla^{k}(u-V)|^2 dx +
\int_{-\frac{1}{k}}^t\int_{\Omega_{k}}\phi_{k+1}^2|\nabla^{k+1}(u-V)|^2 dxds <C\eps_0^3.
\end{aligned}
\end{align*}
Hence we have
\[
\|\nabla^{k}(u-V)\|_{L^{\infty}(-\frac{1}{k+1},0; L^2(\Omega_{k+1}))}+\|\nabla^{k+1}(u-V)\|_{L^2((-\frac{1}{k+1},0)\times \Omega_{k+1})} < C\eps_0^{3/2},
\]
which together with \eqref{u-1} and \eqref{assume-V} implies that for all $0\le k\le s$,
\beq\label{high-V}
\|u-V\|_{L^{\infty}(-\frac{1}{k+1},0; H^{k}(\Omega_{k+1}))}+\|\nabla(u-V)\|_{L^2(-\frac{1}{k+1},0; H^{k}(\Omega_{k+1}))} < C\eps_0^{3/2}.
\eeq

We next estimate $\nabla^{k+1} Y$ as follows. Using the same notations and arguments as before, for any $1\le k\le s$, assume that
\beq\label{assume-k}
\|\nabla Y\|_{L^{\infty}(-\frac{1}{k+1},0; H^{k-1}(\Omega_{k+1}))}+\|\Delta Y\|_{L^2(-\frac{1}{k+1},0; H^{k-1}(\Omega_{k+1}))} <C \eps_0^{3/2}.
\eeq
A straightforward computation for \eqref{Y-eq} with $\varphi\equiv 1$ implies that for all $t\in (-\frac{1}{k+1},0)$,
\begin{align*}
\begin{aligned} 
&\frac{1}{2}\int_{\Omega_{k+1}}\phi_{k+2}^2|\nabla^{k+1}Y|^2 dx +
\int_{-\frac{1}{k+1}}^t\int_{\Omega_{k+1}}\phi_{k+2}^2|\nabla^{k+2}Y|^2 dxds \\
&\quad= \int_{-\frac{1}{k+1}}^t\int_{\Omega_{k+1}}\Big[\phi_{k+2}\partial_t\phi_{k+2}|\nabla^{k+1}Y|^2 -2\phi_{k+2}\nabla\phi_{k+2}\nabla^{k+1}Y \nabla^{k+2}Y\\
&\qquad- \nabla(\phi_{k+2}^2\nabla^{k+1}Y) \nabla^{k}\Big( A'_1(U(Y+x_1))\partial_{x_1}Y
 -\sum_{i=2}^{N}A'_i(U(Y+x_1))\partial_{x_i}Y    \\
&\qquad   -A'_1(U(Y+x_1))|\nabla Y|^2-w\cdot \nabla Y + (w_{1}-h_M(t)) \psi_M (x_1+m(t))\Big)\Big] dxds.
\end{aligned}
\end{align*}
We follow the same arguments as in the previous step. Again, every constant $C$ below depends on $k$. \\
The assumption \eqref{assume-k} yields that
\[
\Big|\int_{-\frac{1}{k+1}}^t\int_{\Omega_{k+1}}\phi_{k+2}\partial_t\phi_{k+2}|\nabla^{k+1}Y|^2 dxds\Big|\le C \|\nabla^{k+1}Y\|_{L^2(Q_{k+1})}^2
<C\eps_0^3,
\]
and 
\begin{align*}
\begin{aligned} 
&\Big|\int_{-\frac{1}{k+1}}^t\int_{\Omega_{k+1}}\phi_{k+2}\nabla\phi_{k+2}\nabla^{k+1}Y \nabla^{k+2}Y dxds \Big|\\
&\quad\le \frac{1}{8}\int_{-\frac{1}{k+1}}^t\int_{\Omega_{k+1}}\phi_{k+2}^2|\nabla^{k+2}Y|^2 dxds+C \|\nabla^{k+1}Y\|_{L^2(Q_{k+1})}^2\\
&\quad<\frac{1}{8}\int_{-\frac{1}{k+1}}^t\int_{\Omega_{k+1}}\phi_{k+2}^2|\nabla^{k+2}Y|^2 dxds +C\eps_0^3.
\end{aligned}
\end{align*}
For other terms related to the flux, we use H$\ddot{\mbox{o}}$lder inequality and Sobolev inequality together with \eqref{ass-2}, to get
\begin{align*}
\begin{aligned} 
&\Big|\int_{-\frac{1}{k+1}}^t\int_{\Omega_{k+1}}\nabla(\phi_{k+2}^2\nabla^{k+1}Y) \nabla^{k}\Big( A'_1(U(Y+x_1))\partial_{x_1}Y\Big)dxds\Big|\\
&\quad=\Big|\int_{-\frac{1}{k+1}}^t\int_{\Omega_{k+1}}\Big[2\phi_{k+2}\nabla\phi_{k+2}\nabla^{k+1}Y+ \phi_{k+2}^2\nabla^{k+2}Y\Big]\\
&\qquad\cdot\Big[ A'_1(U(Y+x_1))\nabla^{k}\partial_{x_1}Y+ \sum_{1\le l\le k}\binom{k}{l}\nabla^{l}A'_1(U(Y+x_1))\nabla^{k-l} \partial_{x_1}Y \Big]  dxds\Big| \\
&\quad\le C\int_{-\frac{1}{k+1}}^t \Big[ \|\nabla^{k+1}Y\|_{L^2(\Omega_{k+1})}+\Big(\int_{\Omega_{k+1}}\phi_{k+2}^2|\nabla^{k+2}Y|^2 dx\Big)^{1/2}\Big] \\
&\qquad \cdot\Big[ \|\nabla^{k+1}Y\|_{L^2(\Omega_{k+1})}+ \|\nabla Y\|_{H^{s}(\Omega_{k+1})}^{\alpha}\Big] ds,
\end{aligned}
\end{align*}
where $\alpha\ge1$ is some constant depending on $k$. Thus, it follows from \eqref{assume-k} and \eqref{ass-3} that
\begin{align*}
\begin{aligned} 
&\Big|\int_{-\frac{1}{k+1}}^t\int_{\Omega_{k+1}}\nabla(\phi_{k+2}^2\nabla^{k+1}Y) \nabla^{k}\Big( A'_1(U(Y+x_1))\partial_{x_1}Y\Big)dxds\Big|\\
&\quad<\frac{1}{8}\int_{-\frac{1}{k+1}}^t\int_{\Omega_{k+1}}\phi_{k+2}^2|\nabla^{k+2}Y|^2 dxds +C\eps_0^3.
\end{aligned}
\end{align*}
Similarly, we have
\begin{align*}
\begin{aligned} 
&\Big|\int_{-\frac{1}{k+1}}^t\int_{\Omega_{k+1}}\nabla(\phi_{k+2}^2\nabla^{k+1}Y) \sum_{i=2}^{N}\nabla^{k}\Big(A'_i(U(Y+x_1))\partial_{x_i}Y\Big) dxds\Big|\\
&\qquad<\frac{1}{8}\int_{-\frac{1}{k+1}}^t\int_{\Omega_{k+1}}\phi_{k+2}^2|\nabla^{k+2}Y|^2 dxds +C\eps_0^3,\\
&\Big|\int_{-\frac{1}{k+1}}^t\int_{\Omega_{k+1}}\nabla(\phi_{k+2}^2\nabla^{k+1}Y) \nabla^{k}\Big(A'_1(U(Y+x_1))|\nabla Y|^2\Big) dxds\Big|\\
&\qquad<\frac{1}{8}\int_{-\frac{1}{k+1}}^t\int_{\Omega_{k+1}}\phi_{k+2}^2|\nabla^{k+2}Y|^2 dxds +C\eps_0^3.
\end{aligned}
\end{align*}
Using \eqref{form-w},  \eqref{assume-k} and \eqref{ass-3}, we have
\begin{align*}
\begin{aligned} 
&\Big|\int_{-\frac{1}{k+1}}^t\int_{\Omega_{k+1}}\nabla(\phi_{k+2}^2\nabla^{k+1}Y) \nabla^{k}(w\cdot \nabla Y)  dxds\Big|\\
&\quad\le C\int_{-\frac{1}{k+1}}^t \Big[ \|\nabla^{k+1}Y\|_{L^2(\Omega_{k+1})}+\Big(\int_{\Omega_{k+1}}\phi_{k+2}^2|\nabla^{k+2}Y|^2 dx\Big)^{1/2}\Big]  \|u-V\|_{H^{k}(\Omega_{k+1})}^{\alpha}\|\nabla Y\|_{H^{s}(\Omega_{k+1})} ds\\
&\quad<\frac{1}{8}\int_{-\frac{1}{k+1}}^t\int_{\Omega_{k+1}}\phi_{k+2}^2|\nabla^{k+2}Y|^2 dxds +C\eps_0^3,
\end{aligned}
\end{align*}
and
\begin{align*}
\begin{aligned} 
&\Big|\int_{-\frac{1}{k+1}}^t\int_{\Omega_{k+1}}\nabla(\phi_{k+2}^2\nabla^{k+1}Y) \nabla^{k}\Big((w_{1}-h_M(t)) \psi_M (x_1+m(t))\Big)  dxds\Big|\\
&\quad\le C\int_{-\frac{1}{k+1}}^t \Big[ \|\nabla^{k+1}Y\|_{L^2(\Omega_{k+1})}+\Big(\int_{\Omega_{k+1}}\phi_{k+2}^2|\nabla^{k+2}Y|^2 dx\Big)^{1/2}\Big] \|u-V\|_{H^{k}(\Omega_{k+1})} ds\\
&\quad<\frac{1}{8}\int_{-\frac{1}{k+1}}^t\int_{\Omega_{k+1}}\phi_{k+2}^2|\nabla^{k+2}Y|^2 dxds +C\eps_0^3.
\end{aligned}
\end{align*}
Therefore, we have
\begin{align*}
\begin{aligned} 
\int_{\Omega_{k+1}}\phi_{k+2}^2|\nabla^{k+1}Y|^2 dx +
\int_{-\frac{1}{k+1}}^t\int_{\Omega_{k+1}}\phi_{k+2}^2|\nabla^{k+2}Y|^2 dxds <C\eps_0^3.
\end{aligned}
\end{align*}
Thus,
\[
\|\nabla^{k+1}Y\|_{L^{\infty}(-\frac{1}{k+2},0; L^2(\Omega_{k+2}))}+\|\nabla^{k+2}Y\|_{L^2((-\frac{1}{k+2},0)\times \Omega_{k+2})} < C\eps_0^{3/2},
\]
which together with \eqref{Y-1} and \eqref{assume-k} implies that 
\[
\|\nabla Y\|_{L^{\infty}(-\frac{1}{s+2},0; H^{s}(\Omega_{s+2}))}+\|\Delta Y\|_{L^2(-\frac{1}{s+2},0; H^{s}(\Omega_{s+2}))} < C \eps_0^{3/2}.
\]
This implies that there exists $C>0$ depending only on $s, N$ such that 
\beq\label{global-Y}
 \|\nabla Y\|_{L^{\infty}(t_0,T; H^{s}_{loc}(\Omega))}\le C\eps_0^{3/2}.
\eeq

\subsection{Proof of \eqref{higher-Y-1} in Proposition \ref{prop:priori2}}
First of all, we use the same argument to get local-in-time estimates on $Y$.
 For any fixed $t_0>0$, and $1\le k\le s$, assume that there exists $C>0$ such that
\beq\label{assume-Yk2}
\|\nabla Y\|_{L^{\infty}(0,\frac{t_0}{2}; H^{k-1}(\Omega))}+\|\Delta Y\|_{L^2(0,\frac{t_0}{2}; H^{k-1}(\Omega))} \le C \eps_0^{3/2}.
\eeq
A simple computation with \eqref{Y-eq} implies that 
\begin{align*}
\begin{aligned} 
&\frac{1}{2}\frac{d}{dt}\int_{\Omega} |\nabla^{k+1}Y|^2 dx +
\int_{\Omega} |\nabla^{k+2}Y|^2 dxds \\
&\quad= -\int_{\Omega} \nabla^{k+2}Y \nabla^{k}\Big( A'_1(U(Y+x_1))\partial_{x_1}Y
 -\sum_{i=2}^{N}A'_i(U(Y+x_1))\partial_{x_i}Y    \\
&\qquad   -A'_1(U(Y+x_1))|\nabla Y|^2 \Big) dxds
\end{aligned}
\end{align*}
Notice that $w=0$ for all $t\le\frac{t_0}{2}$ (see \eqref{w}), therefore, we do not need to estimate $u-V$ unlike the proof of Proposition \ref{prop:priori}.\\
Hence, using the same arguments together with \eqref{assume-Yk2} as before, we get
\[
\|\nabla^{k+1}Y\|_{L^{\infty}(0,\frac{t_0}{2}; L^2(\Omega))}+\|\nabla^{k+2}Y\|_{L^2((0,\frac{t_0}{2})\times\Omega))} < C \eps_0^{3/2},
\]
which together with \eqref{Y-1} and \eqref{assume-Yk2} implies that
\[
\|\nabla Y\|_{L^{\infty}(0,\frac{t_0}{2}; H^{s}(\Omega))}+\|\Delta Y\|_{L^2(0,\frac{t_0}{2}; H^{s}(\Omega))} < C \eps_0^{3/2}.
\]
On the other hand, since the initial condition \eqref{ass-5} has been used in the global-in-time estimate \eqref{global-Y}, we have, under the assumption \eqref{ass-5}, the same result as
\[
 \|\nabla Y\|_{L^{\infty}(\frac{t_0}{2},T; H^{s}_{loc}(\Omega))}\le C\eps_0^{3/2}.
\]

\subsection{Proof of Theorem \ref{thm:general}}
\subsubsection{Global-in-time existence of the shift $ Y$ and contraction of the perturbation $u-V$}
First of all, Proposition \ref{prop:local} implies that 
\begin{align*}
\begin{aligned} 
\|\sqrt{|U'(\cdot+m(t))|} (Y-m(t))\|_{L^{\infty}(0,T_0; L^2(\Omega))} &\le \|\sqrt{|U'(\cdot+m(t))|} Y\|_{L^{\infty}(0,T_0; L^2(\Omega))}+C|m(t)|\\
&\le  C\|\sqrt{|U'(\cdot+m(t))|} Y\|_{L^{\infty}(0,T_0; L^2(\Omega))}.
\end{aligned}
\end{align*}
Thanks to Proposition \ref{prop:priori2}, we use continuation argument to conclude that there exists $\delta_0>0$ sufficiently small such that if $\|u-U\|_{L^2(\Omega)}<\delta_0$ and $u_0\in L^{\infty}(\Omega)$, then there exists $C$ depending only on $s, N$ such that 
\begin{align}
\begin{aligned} \label{Y-space}
&\|\sqrt{|U'(\cdot+m(t))|} (Y-m(t))\|_{L^{\infty}(0,\infty; L^2(\Omega))}+\|\sqrt{|U'(\cdot+m(t))|} \nabla Y\|_{L^2((0,\infty)\times\Omega)}\le C\delta_0\\
& \|\nabla  Y\|_{L^{\infty}(0,\infty; L^2(\Omega))}+ \|\Delta  Y\|_{L^2((0,\infty)\times\Omega)}+\|\nabla  Y\|_{L^{\infty}(0,\infty; H^s_{loc}(\Omega))}+ \|\Delta  Y\|_{L^2(0,\infty; H^{s}_{loc}(\Omega))}\le C\delta_0.
\end{aligned}
\end{align}
In particular, since the Sobolev imbedding implies that
\[
\|\nabla  Y\|_{L^{\infty}((0,\infty)\times\Omega)}\le \|\nabla  Y\|_{L^{\infty}(0,\infty; H^s_{loc}(\Omega))},
\]
it follows from Lemma \ref{lem:cont-2} that for all $t\le t_0$, there exists a constant $C_0$ depending $t_0$ such that 
\[
\int_{\Omega} |u(t,x)-V(t,x)|^2 dx \le C_0\int_{\Omega} |u_0(x)-U(x_1)|^2 dx,
\]
and for all $t>t_0$,
\beq\label{cont-inf}
\frac{1}{2}\int_\Omega (u-V)^2dx +\int_0^{\infty}\int_\Omega |\nabla(u-V)|^2 dx dt+\int_0^{\infty}\Big(\int_\Omega(u-V)U^\prime(x_1) dx\Big)^2 dt  \le\frac{1}{2}\int_\Omega (u(t_0,x)-U(x))^2dx.
\eeq
Likewise, thanks to Proposition \ref{prop:priori} and Lemma \ref{lem:cont}, we have the contraction estimate \eqref{thm-claim2} together with \eqref{Y-space}.

\subsubsection{Large-time behavior of the shift $ Y$}
We here use the same notation $\widetilde Y$ as in proof of Lemma \ref{lem:Y} to denote $\widetilde Y=Y-m(t)$. \\
Set
\begin{equation}\label{ft}
f(t):=\int_\Omega |U( Y+x_1)-U(x_1+m(t))|^2dx.
\end{equation}
We want to show that
\begin{equation}\label{lft}
\lim_{t\rightarrow+\infty} f(t)=0.
\end{equation}
To this end, we show that $f$ and $f'$ are both integrable over $[0,\infty)$.\\
First of all, using the same argument as \eqref{Y-est-0}-\eqref{ine-1}, and then Lemma \ref{lem:fund}, we estimate
\begin{align*}
\begin{aligned} 
\int_0^\infty f(t) dt &=\int_0^\infty\int_\Omega \Big|\int_0^1 U^\prime(\theta \widetilde  Y+x_1+m(t))d\theta \Big|^2|\widetilde Y|^2dx dt\\[3mm]
& \leq C\int_0^\infty\int_\Omega |U^\prime(x_1+m(t))||\nabla\widetilde Y|^2dx dt.
\end{aligned}
\end{align*}
Then, \eqref{Y-space} yields 
\beq\label{f-integral}
 \int_0^\infty f(t) dt <\infty.
\eeq
On the other hand, using the same arguments as in the proof of Lemma \ref{lem:Y}, we estimate
\begin{align*}
\begin{aligned} 
&\int_0^{\infty} |f^\prime(t)| dt\\
&=\int_0^{\infty}\Big|\int_\Omega 2\Big(U( Y+x_1)-U(x_1+m(t))\Big)\Big(U^\prime ( Y+x_1)\partial_t  Y-U^\prime(x_1+m(t))m^\prime(t)\Big) dx\Big|dt\\
&=\int_0^{\infty}\Big|\int_\Omega 2\Big(U(Y+x_1)-U(x_1+m(t))\Big)\Big[U^\prime (Y+x_1)\Big(A'_1(U(Y+x_1))\partial_{x_1} Y\\
&\quad  -\sum_{i=2}^{N}A'_i(U(Y+x_1))\partial_{x_i}Y +A'_1(U(Y+x_1))|\nabla_x  Y|^2+w\cdot \nabla_x  Y+\Delta Y \\
&\quad + (w_{1}- h_M(t)) \psi_M (x_1+m(t)) + h_M(t) +g(t)-U^\prime(x_1+m(t))m^\prime(t)\Big]dx\Big|dt\\
& \leq C\int_0^\infty\Big[\|U(Y+x_1)-U(x_1+m(t))\|_{L^2(\Omega)}^2+\|\sqrt{|U^\prime(x_1+m(t))|}\nabla Y\|_{L^2(\Omega)}^2+\|\Delta Y\|^2_{L^2(\Omega)}\\
&\quad +\|\nabla (u-V)\|^2_{L^2(\Omega)}+\Big(\int_\Omega U^\prime(x_1+m(t))(u-V)dx\Big)^2+|h_M(t)|^2+|g(t)|^2+|m^\prime(t)|^2\Big] dt.
\end{aligned}
\end{align*}
Then, we use \eqref{Y-space}, \eqref{cont-inf}, \eqref{f-integral}, \eqref{K6} and \eqref{mp} to get
\begin{align*}
\begin{aligned} 
\int_0^{\infty} |f^\prime(t)| dt \leq C.
\end{aligned}
\end{align*}
Therefore, $f$ and $f'$ are both integrable over $[0,\infty)$, which completes \eqref{lft}.

\begin{appendix}
\setcounter{equation}{0}
\section{Proof of Proposition \ref{prop:local}}
\subsection{Local existence of Eq. \eqref{Y-eq} with $\varphi\equiv1$}
First of all, we construct approximate solutions $(Y_n)_{n\ge 0}$, following iteration scheme:\\
Set
\[ Y_0(t,x)=0,\quad t\ge 0,~ x\in\Omega. \]
Then, for a given $n$-th approximate solution $Y_n$, we define $Y_{n+1}$ as a solution of the linear equation
\begin{align}
\begin{aligned} \label{Y-n}
&\partial_t Y_{n+1} -A'_1(U(Y_n+x_1))\partial_{x_1}Y_{n+1} +\sum_{i=2}^{N}A'_i(U(Y_n+x_1))\partial_{x_i}Y_{n+1} -A'_1(U(Y_n+x_1))|\nabla Y_n|^2\\&\quad  +w_n\cdot \nabla Y_{n}-\Delta Y_{n+1} = -w_{n,1}\psi_M (x_1+m_n)  - h_{n,M}(t)(1-\psi_M (x_1+m_n)) - g_n(t),
\end{aligned}
\end{align}
where the notations $w_n, w_{n,1}, h_{n,M}$ and $m_n$ mean that $Y_n$ replaces $Y$ in those functions $w, w_{1}, h_{M}$ and $m$, respectively, appeared in the Eq. \eqref{Y-eq} with $\varphi\equiv1$.\\
We will show that for any $R>0$, there exists $T_0>0$ such that
\beq\label{uniform}
\|\sqrt{|U'(\cdot + m_{n})|} Y_n\|_{L^{\infty}(0,T_0; L^2(\Omega))}+ \|\nabla Y_n\|_{L^{\infty}(0,T_0; H^s(\Omega))}+ \|\Delta Y_n\|_{L^2(0,T_0; H^{s}(\Omega))} \le R
\eeq
For notational simplification, we rewrite \eqref{Y-n} into a linear equation:
\begin{align}
\begin{aligned} \label{Y-linear}
&\partial_t Y -A'_1(U(Z+x_1))\partial_{x_1}Y +\sum_{i=2}^{N}A'_i(U(Z+x_1))\partial_{x_i}Y\\
& -A'_1(U(Z+x_1))|\nabla Z|^2 +w_{Z}\cdot \nabla Z-\Delta Y\\
&\quad = -w_{Z,1}\psi_M (x_1+m_Z)  - h_{Z,M}(t)(1-\psi_M (x_1+m_Z)) - g_Z(t),\\
&Y|_{t=0}=0,
\end{aligned}
\end{align}
where the notations $w_Z, w_{Z,1}, h_{Z,M}$ and $m_Z$ mean that $Z$ replaces $Y$ in those function $w, w_{1}, h_{M}$ and $m$, respectively, appeared in the Eq. \eqref{Y-eq} with $\varphi\equiv1$.\\

Assume that for any $R>0$, there exists $T_0>0$ such that
\begin{equation} \label{assume-Z}
\|\sqrt{|U'(\cdot + m_Z)|} Z\|_{L^{\infty}(0,T_0; L^2(\Omega))}+ \|\nabla Z\|_{L^{\infty}(0,T_0; H^s(\Omega))}+ \|\Delta Z\|_{L^2(0,T_0; H^{s}(\Omega))} \le R.
\end{equation}
We first estimate $\|\nabla Y\|_{L^{\infty}(0,T_0; H^s(\Omega))}+ \|\Delta Y\|_{L^2(0,T_0; H^{s}(\Omega))} \le R$.\\
For any $k$ with $0\le k\le s$, it follows from \eqref{Y-linear} that for all $t\in (0,T_0)$,
\begin{align*}
\begin{aligned} 
&\frac{1}{2}\frac{d}{dt}\int_{\Omega} |\nabla^{k+1}Y|^2 dx +
\int_{\Omega} |\nabla^{k+2}Y|^2 dxds \\
&\quad= -\int_{\Omega} \nabla^{k+2}Y \nabla^{k}\Big( A'_1(U(Z+x_1))\partial_{x_1}Y\Big)
 + \nabla^{k+2}Y \nabla^{k}\Big(\sum_{i=2}^{N}A'_i(U(Z+x_1))\partial_{x_i}Y \Big)   \\
&\qquad   - \nabla^{k+2}Y \nabla^{k}\Big(A'_1(U(Z+x_1))|\nabla Z|^2\Big)+ \nabla^{k+2}Y \nabla^{k}(w_{Z}\cdot \nabla Z)\\
&\qquad +\nabla^{k+2}Y \nabla^{k} \Big(w_{Z,1}\psi_M (x_1+m_Z)\Big) -\nabla^{k+2}Y  \nabla^{k}\psi_M (x_1+m_Z) h_{Z,M}(t)dx\\
&:=\sum_{i=1}^6 I_i.
\end{aligned}
\end{align*}
For terms related to the flux, we use H$\ddot{\mbox{o}}$lder inequality and Sobolev inequality together with \eqref{assume-Z}, to get
\begin{align*}
\begin{aligned} 
|I_1|&=\Big|\int_{\Omega} \nabla^{k+2}Y \Big[ A'_1(U(Z+x_1))\nabla^{k}\partial_{x_1}Y+ \sum_{1\le l\le k}\binom{k}{l}\nabla^{l}A'_1(U(Z+x_1))\nabla^{k-l} \partial_{x_1}Y \Big]  dx\Big| \\
&\le C \|\nabla^{k+2}Y\|_{L^2(\Omega)} \Big[ \|\nabla^{k+1}Y\|_{L^2(\Omega)}+ \|\nabla Z\|_{H^{s}(\Omega)}^{\alpha}\|\nabla Y\|_{H^{s}(\Omega)}\Big]\\
&\le C \|\nabla^{k+2}Y\|_{L^2(\Omega)} \Big[ \|\nabla^{k+1}Y\|_{L^2(\Omega)}+ R^{\alpha}\|\nabla Y\|_{H^{s}(\Omega)}\Big]\\
& \le \frac{1}{8}\|\nabla^{k+2}Y\|_{L^2(\Omega)}^2 +C\|\nabla^{k+1}Y\|_{L^2(\Omega)}^2+CR^{2\alpha}\|\nabla Y\|_{H^{s}(\Omega)}^2.
\end{aligned}
\end{align*}
where $\alpha\ge1$ is some constant depending on $k$. \\
Likewise, we have
\begin{align*}
\begin{aligned} 
|I_2|&\leq\frac{1}{8}\|\nabla^{k+2}Y\|_{L^2(\Omega)}^2 +C\|\nabla^{k+1}Y\|_{L^2(\Omega)}^2+CR^{2\alpha}\|\nabla Y\|_{H^{s}(\Omega)}^2,\\
|I_3|&\leq\frac{1}{8}\|\nabla^{k+2}Y\|_{L^2(\Omega)}^2 +C\|\nabla^{k+1}Y\|_{L^2(\Omega)}^2+CR^{2\beta},
\end{aligned}
\end{align*}
where $\alpha, \beta\ge1$ are some constants depending on $k$.\\
To estimate $I_4$, notice that
\[
\|\nabla u\|_{L^{\infty}(0,T_0;H^{s-1}(\Omega))}\le e^{CT_0}\|\nabla u_0\|_{H^{s-1}(\Omega)},\quad\mbox{and}\quad \|u\|_{L^{\infty}}\le \|u_0\|_{L^{\infty}}, 
\]
which yield that
\begin{align*}
\begin{aligned} 
|I_4|&\le C \|\nabla^{k+2}Y\|_{L^2(\Omega)} \| \nabla^{k}(w_{Z}\cdot \nabla Z) \|_{L^2(\Omega)} \\
&\le C  \|\nabla^{k+2}Y\|_{L^2(\Omega)} (\|\nabla u\|_{H^{s-1}(\Omega)} + \|\nabla Z\|_{H^{s}(\Omega)}+1)\|\nabla Z\|_{H^s(\Omega)} \\
&\leq\frac{1}{8}\|\nabla^{k+2}Y\|_{L^2(\Omega)}^2+CR^2(e^{CT_0}+R^2+1),\\
\end{aligned}
\end{align*}
and
\begin{align*}
\begin{aligned} 
|I_5|&\le C \|\nabla^{k+2}Y\|_{L^2(\Omega)} \|\nabla^k (w_{Z,1}\psi_M (x_1+m_Z)) \|_{L^2(\Omega)} \\
&\le C  \|\nabla^{k+2}Y\|_{L^2(\Omega)} (\|\nabla u\|_{H^{s-1}(\Omega)} + \|\nabla Z\|_{H^{s}(\Omega)}^{\alpha}+1)\\
&\leq\frac{1}{8}\|\nabla^{k+2}Y\|_{L^2(\Omega)}^2+C(e^{CT_0}+R^{\alpha}+1).
\end{aligned}
\end{align*}
Finally, it follows from \eqref{assume-Z} that
\[
|m_Z|\le C\|\sqrt{|U'(\cdot + m_Z)|} Z\|_{L^{\infty}(0,T_0; L^2(\Omega))} \le CR,
\]
which yields
\begin{align*}
\begin{aligned} 
|I_6|\le C | h_{Z,M}(t)| \|\nabla^{k+2}Y\|_{L^2(\Omega)} \| \nabla^{k}\psi_M (\cdot+m_Z) \|_{L^2(\Omega)} \leq \frac{1}{8}\|\nabla^{k+2}Y\|_{L^2(\Omega)}^2+CR^2.
\end{aligned}
\end{align*}
Therefore, we have 
\begin{align*}
\begin{aligned} 
\frac{d}{dt}\int_{\Omega} |\nabla^{k+1}Y|^2 dx +
\int_{\Omega} |\nabla^{k+2}Y|^2 dxds \leq C\|\nabla^{k+1}Y\|_{L^2(\Omega)}^2+C\|\nabla Y\|_{H^{s}(\Omega)}^2+C_R,
\end{aligned}
\end{align*}
where $C_R$ is a constant depending on $R$.\\
Then, summing the above estimates over $0\le k\le s$, we have
\[
\frac{d}{dt}\|\nabla Y\|_{H^{s}(\Omega)}^2 +
\int_{\Omega} \|\nabla^2 Y\|_{H^{s}(\Omega)}^2 ds \leq C\|\nabla Y\|_{H^{s}(\Omega)}^2+C_R,
\]
which implies
\[
 \|\nabla Y \|_{L^{\infty}(0,T_0; H^s(\Omega))}^2+\|\Delta Y\|_{L^2(0,T_0; H^s(\Omega))}^2 \le C_RT_0e^{CT_0}.
\]
Hence we take $T_0$ to be small so that 
\beq\label{high-Y}
 \|\nabla Y \|_{L^{\infty}(0,T_0; H^s(\Omega))}+\|\Delta Y\|_{L^2(0,T_0; H^s(\Omega))} \le R.
\eeq

We now estimate $\|\sqrt{|U'(\cdot + m)|} Y\|_{L^{\infty}(0,T_0; L^2(\Omega))}\le R$ using the above estimates \eqref{high-Y}.\\
Multiplying \eqref{Y-linear} by $|U'(x_1+m(t))| Y$, and using the same arguments as the two terms $J_1$ and $J_3$ in \eqref{LL0}, we have that
\[
\begin{array}{ll}
\di \partial _t \Big(|U'(x_1+m(t))|\f{ Y^2}{2}\Big)+U^{\prime\prime}(x_1+m(t))m^\prime (t)\f{Y^2}{2}\\
\di 
-\Big[A_1'(U( Z+x_1))-A_1'(U(x_1+m(t)))\Big]|U'(x_1+m(t))| Y\partial_{x_1} Y\\
\di+\sum_{i=2}^N A_i^\prime (U( Z+x_1)) |U'(x_1+m(t))|Y\partial_{x_i} Y-A_1^\prime (U( Z+x_1))|\nabla  Z|^2|U'(x_1+m(t))| Y\\
\di + w_Z\cdot \nabla Z|U'(x_1+m(t))| Y -{\rm div}(|U'(x_1+m(t))|  Y\nabla Y)\\
\di +\partial_{x_1}(\partial_{x_1}|U'(x_1+m(t))|\f{Y^2}{2}) +|U'(x_1+m(t))||\nabla Y|^2\\
\di=-\Big(w_{Z,1}\psi_M (x_1+m_Z)  + h_{Z,M}(t)(1-\psi_M (x_1+m_Z)) + g_Z(t)) \Big)|U'(x_1+m(t))| Y.
\end{array}
\]
Integrating the above equation over $\Omega$, we have
 \begin{align*}
\begin{aligned} 
&\frac{d}{dt}\int_{\Omega} |U^\prime(x_1+m(t))|\f{ Y^2}{2} dx +
\int_{\Omega} |U^\prime(x_1+m(t))||\nabla Y|^2 dx =-\int_\Omega U^{\prime\prime}(x_1+m(t))m^\prime (t)\f{Y^2}{2}dx\\
&\qquad+\int_{\Omega}\Big(A_1'(U(Z+x_1))-A_1'(U(x_1+m(t)))\Big)|U^\prime(x_1+m(t))| Y\partial_{x_1}Y\\
&\qquad -\int_{\Omega}\sum_{i=2}^N A_i^\prime (U( Z+x_1)) |U'(x_1+m(t))|Y\partial_{x_i} Y dx\\
&\qquad   +\int_{\Omega} A'_1(U(Z+x_1))|\nabla Z|^2|U^\prime(x_1+m(t))| Ydx-\int_{\Omega}w_{Z}\cdot \nabla Z|U^\prime(x_1+m(t))| Y dx \\
&\qquad -\int_{\Omega}  \Big(w_{Z,1}\psi_M (x_1+m(t))  + h_{Z,M}(t)(1-\psi_M (x_1+m(t))) +g_Z(t) \Big)|U^\prime(x_1+m(t))| Y dx.
\end{aligned}
\end{align*}
In order to control $m'(t)$, we use the same computations as in Remark \ref{mt-1} and \eqref{m-est}, together with $\|\nabla Y\|_{L^{\infty}((0,T_0)\times\Omega)}\le R$ by \eqref{high-Y}. Then, we have that for all $t\in(0,T_0)$,
 \begin{align*}
\begin{aligned} 
 |m^\prime(t)|&\le C\Big[ \int_\Omega |U^\prime(x_1+m(t))| \Big(|A'_1(U(Z+x_1))||\partial_{x_1} Y|+  \sum_{i=1}^{N}|A_i^\prime(U(Z+x_1))||\partial_{x_i} Y|\Big)dx\\
& \quad +\int_\Omega |U^\prime(x_1+m(t))| |A_1^\prime(U(Z+x_1))||\nabla Z|^2dx +\int_\Omega |U^\prime(x_1+m(t))| |w_Z||\nabla Z|dx\\
& \quad +\int_\Omega |U^\prime(x_1+m(t))| |\Delta Y| dx+\int_\Omega|U^\prime(x_1+m(t))| (|w_{Z,1}| + |h_{Z,M}(t)| + |g_Z(t)|) dx \Big].
\end{aligned}
\end{align*}
Since 
\beq\label{ten-1}
|w_Z|\le C|u-U(Z+x_1)|\le C(\|u_0\|_{L^{\infty}}+\|U\|_{L^{\infty}}),\quad \mbox{and}\quad |h_{Z,M}| + |g_Z|\le C\|w_{Z}\|_{L^{\infty}},
\eeq 
we use \eqref{assume-Z} and \eqref{high-Y} to estimate
\begin{align*}
\begin{aligned} 
 |m^\prime(t)|\le C(R+R^2) \|U'\|_{L^1(\Omega)} + C\|\Delta Y\|_{L^2(\Omega)} \|U'\|_{L^2(\Omega)}\le C(R+R^2),
\end{aligned}
\end{align*}
which yields
\[
\Big| \int_\Omega U^{\prime\prime}(x_1+m(t))m^\prime (t)\f{Y^2}{2}dx \Big| \le C(R+R^2) \int_{\Omega} |U^\prime(x_1+m(t))| Y^2 dx.
\]
Then, we use \eqref{assume-Z} and \eqref{ten-1} to estimate
 \begin{align*}
\begin{aligned} 
\frac{d}{dt}\int_{\Omega} |U^\prime(x_1+m(t))| Y^2 dx +
\int_{\Omega} |U^\prime(x_1+m(t))||\nabla Y|^2 dx  \le C\int_{\Omega} |U^\prime(x_1+m(t))| Y^2 dx + C_R.
\end{aligned}
\end{align*}
which gives
\[
\|\sqrt{|U'(\cdot + m)|} Y\|_{L^{\infty}(0,T_0; L^2(\Omega))}\le \sqrt{C_RT_0e^{CT_0}}\le R, \quad {\rm if}~~T_0\ll1.
\]
Hence, we have shown that the sequence of approximate solutions $(Y_n)_{n\ge 0}$ is uniformly bounded as \eqref{uniform}. 
The remaining part is quite standard, so we only provide a sketch of the proof. Using the uniform estimates \eqref{uniform} and same energy estimates as above, we easily have the strong convergence of sequence $(Y_n)_{n\ge 0}$ towards a limit function $Y$ in a lower-order space $L^{\infty}(0,T_0;L^2(\Omega))\cap L^{2}(0,T_0;H^1(\Omega))$. Then, it is obvious that the limit $Y$ is a solution of \eqref{Y-eq}, and satisfies the estimates \eqref{Y-local-sol}.

\subsection{Local existence of Eq. \eqref{Y-eq}}
For the local existence of Eq. \eqref{Y-eq} in the time interval of $(0,\frac{t_0}{2}]$, we just need the condition $u_0\in L^{\infty}(\Omega)$ without $\nabla u_0\in H^{s-1}(\Omega)$, because \eqref{Y-eq} has no terms related to $w$ and $h_M$ for such a time interval $(0,\frac{t_0}{2}]$, the three terms $I_4$, $I_5$ and $I_6$ in Section A.1 above do not appear. 
\end{appendix}

\end{document}